\newtheorem{theorem}{Theorem}[section]
\newtheorem{proposition}[theorem]{Proposition}
\newtheorem{lemma}[theorem]{Lemma}
\theoremstyle{definition}
\newtheorem{definition}[theorem]{Definition}
\newtheorem{conjecture}[theorem]{Conjecture}
\theoremstyle{remark}
\newtheorem*{remark}{Remark}
\renewcommand{\qed}{\hskip 10pt $\Box$}
\renewenvironment{proof}{\par {\sc {\bf Proof.}\hskip 5pt}}{\hfill \qed \par}
\begin{document}
\title{Genetic theory for cubic graphs}
\author{Pouya Baniasadi, Vladimir Ejov, Jerzy A. Filar, Michael Haythorpe}
\maketitle
\begin{abstract}We propose a partitioning of the set of unlabelled,
connected cubic graphs into two disjoint subsets named genes
and descendants, where the cardinality of the descendants is much larger
than that of the genes. The key distinction between the two subsets is
the presence of special edge cut sets, called
crackers, in the descendants. We show that every descendant can be created
by starting from a finite set of genes, and introducing the required crackers
by special breeding operations. We prove that it is always possible to identify genes that can be used to generate any given descendant, and provide inverse operations that enable their reconstruction.
A number of interesting properties of genes may be inherited by the descendant, and we
therefore propose a natural algorithm that decomposes a descendant into its ancestor genes. We
conjecture that each descendant can only be generated by starting with a unique set of ancestor genes. The latter is supported by numerical experiments.

\end{abstract}


\section{Introduction}\label{s:Intro}

In this paper, we study the family $S:=S(N)$ of undirected,
unlabelled, connected cubic graphs on $N=2m$ vertices, where $m$
is a positive integer greater or equal to $2$. We recall that a
cubic graph has exactly three edges incident on every vertex. In
our study we will be concerned with the problem of generating
complex cubic graphs by appropriate compositions of simpler cubic graphs
in a manner somewhat analogous to genetic ``breeding". This will
be achieved with the help of six ``breeding operations". Cubic graphs
that cannot be seen as resulting from such operations will be called
``genes", and all other cubic graphs will be called ``descendants".
Of course, descendants constitute a majority of cubic graphs. We will
prove that every descendant graph can be created from a finite set of
genes. Since a lot of the structure in a descendant is inherited
from the genes, many properties of those genes are inherited as well. This
motivates the introduction of \lq\lq inverse operations" that, ultimately, decompose
a given descendant into a set of \lq\lq ancestor genes". Such a
decomposition could subsequently be used with other graph theory algorithms to improve
solving time.

Undirected cubic graphs have been extensively studied in the
literature (e.g., see \cite{hararybook}, \cite{Harary}
\cite{petersenbook}). In particular, there are now programs that enumerate
all instances (up to an automorphism) of cubic graphs on $N$
vertices (e.g., see \cite{genreg}).  A significant line of
research concerns generation of cubic graphs with the help of an
exponential generating function (e.g., see \cite{brinkmann}, \cite{genreg},
\cite{royle}). To the extent that, in this paper, we generate
descendant cubic graphs, the present contribution is conceptually,
but not methodologically, related to the preceding.

The construction of ancestors and descendants of cubic graphs could be
seen as being related to the work of McKay and Royle
\cite{McKay1986}. However, a feature of our approach is that all ancestors and
descendants remain in the class of cubic graphs, and collectively constitute
the entire set of connected cubic graphs. This enables us
to study those properties of cubic graphs that may be ``inherited"
from simpler cubic graphs. Furthermore, the construction methods
are quite different from those in \cite{McKay1986}. The work of Batagelj
\cite{batagelj} is also similar to ours in that it is concerned with the
generation of complex cubic graphs from simpler cubic graphs, where
the former maintain the properties of the latter. In fact, some of the
generating rules given in \cite{batagelj} are analogous to some of the
breeding operations in this paper. However, the two approaches differ
in that Batagelj's approach involves starting with a single cubic graph
and replacing particular structures within the graph sequentially. In our
approach, the majority of development occurs by combining multiple
cubic graphs together, with the resulting graph being more complex than
both parents.

Importantly, preliminary numerical experimentation revealed a surprising \lq\lq unique ancestors property" that is either very common or, as we believe, universal.
Namely, we conjecture that: {\em Given any descendant cubic graph $G_D$ there exists a unique set of genes from which $G_D$ can be derived by a finite sequence of the six breeding operations.} Of course, there may well be a multitude of sequences of breeding operations that lead from the ancestor genes to $G_D$. However, if the above conjecture holds, then all possible decomposition pathways of $G_D$ into its ancestor genes will ultimately lead to a unique set of ancestor genes.

\section{Preliminaries}

 At the conclusion of this manuscript, we conjecture
that any given descendant can be obtained via our approach through a particular
set of ancestor genes, and provide numerical evidence supporting this conjecture.
If true, this conjecture, combined with the proof of existence of ancestor genes for any descendant,
implies that our approach permits a unique decomposition for every cubic graph.

A standard concept in graph theory is that of {\em edge
connectivity}. In the simple case in which removing a single,
specific edge would disconnect the graph, that edge is called a
{\em bridge}. Extending this notion a set of edges $EC$
constitutes an {\em edge cut} if their removal disconnects the
graph. The smallest cardinality of an edge cut in a given graph
$G$ is defined to be the {\em edge-connectivity} of that graph.
Of course, cubic graphs can be $k-$edge-connected for only three
values of $k$: $1,2$ or $3$.

We note, however, that in some $3-$edge-connected cubic graphs
(e.g., the famous Petersen graph, see Figure \ref{fig: Petersen})
the removal of any minimal edge cut
set isolates a single vertex. Arguably, such a partition of
vertices is in a sense degenerate, and prevents a more refined
classification of cubic graphs. To address this problem and
achieve a finer classification we introduce a special class of
edge cut sets that we name {\em crackers}. The latter are defined
as follows.

\begin{definition}An edge cut set of a cubic graph $\Gamma$ consisting of $k$ edges is a
{\em $k$-cracker} if no two edges are adjacent in the sense of
being incident on the same vertex, and no proper subset of these
edges disconnects the graph.\label{k-cracker}\end{definition}

\begin{lemma}The removal of a cracker from a connected graph results in exactly two disjoint components.\end{lemma}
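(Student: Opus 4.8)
The plan is to notice that the only hypothesis doing any work here is the inclusion--minimality condition in Definition~\ref{k-cracker}, namely that no proper subset of a cracker disconnects the graph; the non-adjacency of the edges and the cubicity of $\Gamma$ play no role. In other words, the lemma is just the classical fact that an inclusion-minimal edge cut of a connected graph is a bond. So I would take $C$ to be the cracker, viewed merely as an edge cut of the connected graph $\Gamma$, write $H_1,\dots,H_r$ for the connected components of $\Gamma-C$, note that $r\ge 2$ because $C$ is an edge cut, and aim to exclude $r\ge 3$.

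The key step is, for each component $H_i$, to look at the set $C_i\subseteq C$ of edges of $C$ having an endpoint in $V(H_i)$, and to check that $C_i$ on its own is an edge cut of $\Gamma$ that separates $V(H_i)$ from the rest. Indeed, any edge of $\Gamma$ with exactly one endpoint in $V(H_i)$ cannot lie in $\Gamma-C$, since otherwise its two endpoints would lie in a common component of $\Gamma-C$ and the other endpoint would be forced into $V(H_i)$; hence every such edge lies in $C$, and therefore in $C_i$. Consequently $\Gamma-C_i$ contains no edge between $V(H_i)$ and $V(\Gamma)\setminus V(H_i)$. Both of these vertex sets are non-empty (the complement because $r\ge 2$), and connectedness of $\Gamma$ gives at least one edge between them, so $C_i\neq\emptyset$; thus $C_i$ is a non-empty edge cut contained in $C$.

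Minimality of $C$ then forces $C_i=C$ for every $i$, so every edge of $C$ simultaneously has an endpoint in each of $V(H_1),\dots,V(H_r)$. Since an edge has only two endpoints and these vertex sets are pairwise disjoint, an edge can meet at most two of them, whence $r\le 2$; combined with $r\ge 2$ this yields $r=2$. I do not anticipate a real obstacle: the argument is essentially a textbook fact about bonds, and the only points needing care are the routine verification that all boundary edges of a component of $\Gamma-C$ belong to $C$, and the bookkeeping that keeps the sets $C_i$ and the complements $V(\Gamma)\setminus V(H_i)$ non-empty. A slightly slicker alternative would be to contract each $H_i$ to a single vertex and argue in the resulting connected quotient multigraph, but the direct argument above seems the most transparent.
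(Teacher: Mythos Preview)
Your proof is correct, but it follows a different route from the paper's. The paper argues by sequential edge removal: deleting the edges of $C=\{e_1,\dots,e_k\}$ one at a time, each deletion can increase the number of components by at most one; hence if $\Gamma-C$ had three or more components, then after removing $e_1,\dots,e_{k-1}$ the graph would already be disconnected, contradicting the minimality clause in Definition~\ref{k-cracker}. Your argument is the classical bond characterisation: you show that the edge-boundary $C_i$ of each component $H_i$ is itself a non-empty cut contained in $C$, so $C_i=C$ for all $i$, and then the pigeonhole observation that an edge has only two endpoints caps $r$ at $2$. The paper's approach is shorter and more direct for the purpose at hand; yours is slightly more structural and, as you note, also makes explicit that $C$ coincides with the full set of edges between $V(H_1)$ and $V(H_2)$, which is a mild bonus even if the paper does not need it.
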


\begin{proof}Since any cracker is an edge cut set, the removal of a cracker must result in at least two disjoint components. Assume there exists a cracker $C = \{e_1, \hdots, e_k\}$ whose removal results in more than two disjoint components. We can think of the removal of $C$ as a series of individual edge removals, for each edge in $C$. It is clear that the removal of a single edge cannot result in more than one additional disjoint component. Therefore, after removing edges $e_1, \hdots, e_{k-1}$, but before removing edge $e_k$, there must already be at least two disjoint components. However, this implies that a proper subset of $C$ also disconnects the graph, which violates Definition \ref{k-cracker}. Therefore, $C$ is not a cracker, and the initial assumption is false.\end{proof}

Of course, any cracker is not only an edge cut set, but in fact
a {\em cyclic edge cut set}. This is clear because its removal disconnects the graph into two connected
subgraphs, each of which must contain at least three vertices. Since each remaining vertex has degree at least
two (as only non-adjacent edges were removed), these connected subgraphs must contain cycles. It is then clear that a minimal cyclic edge cut set in a given cubic graph is a cracker of minimal size for that graph. However, larger cyclic edge cut sets may contain adjacent edges, and therefore not all cyclic edge cut sets are crackers.

We recall that the {\em girth}, $g$, of a graph is the length of the
shortest (nontrivial) cycle in the graph. In many cases, a
nontrivial cycle determining a cubic graph's girth automatically
defines a $g$-cracker made up of edges that are not in the cycle but
which have one vertex on the cycle. However, many graphs have
crackers of size less than $g$. For instance, it is easy to see
that in the graph given in Figure \ref{fig: twoCracker}, $g=3$ but
the edges $e_1$ and $e_2$ form a $2$-cracker and no $1$-cracker
exists in this graph.

\begin{figure}[h]\begin{center}\epsfig{scale=0.54, figure=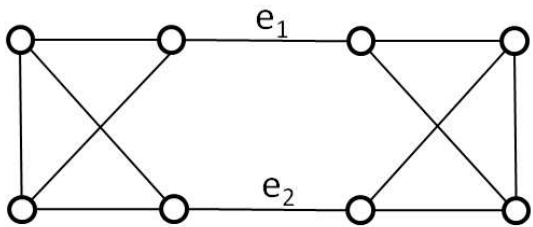}\\
\caption{An 8-vertex cubic graph with girth $3$ and the two cracker
$\{e_1, e_2\}$}\label{fig: twoCracker}\end{center}\end{figure}

We note also that there are only two connected cubic graphs, on $4$
and $6$ vertices respectively, that contain no crackers at all
(see Figure \ref{fig-graph_4_and_graph_6} and the discussion in Section \ref{s:Motiv}).

\vspace*{0.5cm}\begin{remark}Note that, since the minimal cyclic edge cut set in a
cubic graph is a cracker, it is clear that for any given cubic graph,
the cyclic edge connectivity is equal to the size of the smallest cracker in that graph.
For the sake of simplifying the notation, we will refer to a cyclically
$k$-connected graph as a C$k$-connected graph. The class of all C$k$-connected
cubic graphs on $N$ vertices will be denoted by $S_{k}(N)$, or simply
by $S_{k}$, when the number of vertices is fixed.\label{rem-cyclic}\end{remark}
%
%

We note that the famous Petersen graph is C5-connected in the
above sense (see Figure \ref{fig: Petersen}), as the edges
connecting the ``inner-star" to the outer boundary form one of a
number of $5-$crackers, and no smaller crackers exist in this
graph.
\begin{figure}[h]\begin{center}\epsfig{scale=0.41, figure=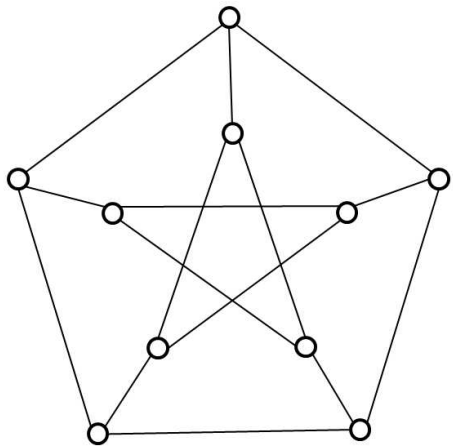}\\
\caption{Petersen graph which is C5-connected}\label{fig:
Petersen}\end{center}\end{figure}

\section{Motivation}\label{s:Motiv}

It follows immediately from Remark \ref{rem-cyclic} that (for
fixed $N \geq 8$) the class $C$ of connected cubic graphs can be partitioned as
\begin{equation*}\label{Partition}
S= \bigcup_{i=1}^M S_i,
\end{equation*}
where $M \leq \displaystyle\frac{N}{2}$. The choice of upper bound is conservative because at most $\displaystyle\frac{N}{2}$ non-adjacent edges can be chosen in any graph of size $N$, but in reality
it is likely that far fewer than $\displaystyle\frac{N}{2}$ partitions will be required for any given $N$.
For instance, when $N = 20$, $M = 6$.

\begin{definition}If a cubic graph $\Gamma$ is C$k$-connected
for $k \geq 4 $, we call it a {\em gene}. Otherwise, we call
$\Gamma$ a {\em
descendant}.\label{def-gene_descendant}\end{definition}

The reasoning behind the choice of names {\em gene} and {\em
descendant} is made clear in Section \ref{sec: breeding},
where we demonstrate that any descendant can be obtained from a
set of genes, through the use of prescribed {\em breeding operations} that
introduce crackers into a descendant. Experiments have shown that
genes are far less numerous than descendants.

It was mentioned earlier that two cubic graphs, namely the 4-vertex gene $\Gamma_4^*$ and the 6-vertex gene $\Gamma_6^*$, contain no crackers at all. These two graphs can be seen in Figure \ref{fig-graph_4_and_graph_6}. The following lemma proves that every other cubic graph contains at least one cracker, and that the size of the smallest cracker is bounded above by the girth of the graph.

\begin{figure}[h!]\begin{center}\epsfig{scale=0.5, figure=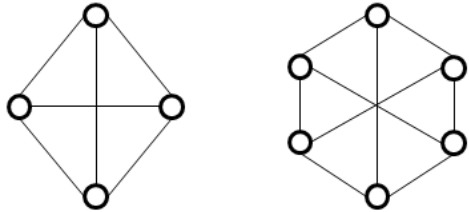}\\
\caption{Cubic graphs $\Gamma_4^*$ and
$\Gamma_6^*$, which contain no crackers}\label{fig-graph_4_and_graph_6}\end{center}\end{figure}

\begin{lemma} \label{prop-boundgirth} Except for $\Gamma_4^*$ and $\Gamma_6^*$, all connected cubic graphs contain at least one cracker of size no more than the girth $g$ of the graph.\label{lem-all_cracker}
\end{lemma}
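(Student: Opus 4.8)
The plan is to build a cracker out of a shortest cycle. Let $C$ be a cycle of length $g$ in $G$, and let $D$ be the set of edges having exactly one endpoint on $C$; since each vertex of $C$ has degree $3$ and lies on two edges of $C$, the set $D$ contains exactly one edge at each vertex of $C$. Two easy observations make $D$ usable. First, $C$ has no chord: a chord joining two vertices of $C$ at cycle-distance $d$ is either a parallel edge (if $d=1$), impossible in a simple graph, or it creates a cycle of length $d+1<g$, contradicting minimality of $g$. Hence $|D|=g$ exactly, and a fortiori $V(C)\neq V(G)$, since otherwise $G$ would be the $2$-regular cycle $C$. Therefore $D$ is a genuine edge cut separating $V(C)$ from the nonempty set $V(G)\setminus V(C)$. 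Second, if $G$ has a bridge, that single edge is vacuously a $1$-cracker and we are done because $1\le g$; so we may assume $G$ is $2$-edge-connected.

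The argument then splits on whether $D$ has two adjacent edges. If not, pick $D'\subseteq D$ of smallest cardinality subject to $D'$ disconnecting $G$; then no proper subset of $D'$ disconnects $G$, and the edges of $D'$ are pairwise non-adjacent, so $D'$ is a $|D'|$-cracker with $|D'|\le|D|=g$, and we are finished. If $D$ does contain an adjacent pair, it has the form $\{uw,vw\}$ with $u\neq v$ on $C$ and $w\notin C$ (distinct edges of $D$ meet $C$ in distinct vertices, so the shared vertex $w$ lies off $C$). Then the arc of $C$ between $u$ and $v$ of length $d\le\lfloor g/2\rfloor$, together with the path $u,w,v$, is a cycle of length $d+2$, so minimality of $g$ gives $g\le d+2\le\lfloor g/2\rfloor+2$, forcing $g\le 4$. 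Hence only $g=3$ and $g=4$ remain.

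For these, I would work directly with $S:=V(C)\cup\{w\}$. Both $u$ and $v$ have all three of their edges inside $S$ (two on $C$, one to $w$), so they contribute nothing to the edge boundary $\partial(S)$; consequently $|\partial(S)|\le g-1$. Also $|S|\in\{4,5\}$ is odd, so $S\neq V(G)$ and $\partial(S)$ is a genuine, small edge cut. If $\partial(S)$ has no adjacent pair, extract a cracker as before. Otherwise two of its edges share a vertex, which (using chordlessness and $2$-edge-connectivity) lies outside $S$; absorbing that vertex into $S$ strictly decreases the size of the boundary cut, and a boundary cut of size $1$ would be a bridge, which is impossible. So after at most one or two such steps one obtains a non-adjacent edge cut, hence a cracker, of size at most $g$. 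The sole way the procedure can fail is for the absorptions to swallow all of $V(G)$; checking exactly when this occurs shows that $G$ must then be $K_4$ (when $g=3$) or $K_{3,3}$ (when $g=4$), i.e.\ $\Gamma_4^*$ or $\Gamma_6^*$.

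I expect the bookkeeping in the $g=3$ and $g=4$ cases to be the main obstacle: one must enumerate the few ways the outer endpoints of the two or three boundary edges can coincide, confirm at each stage that the current vertex subset is proper, and verify that the only dead ends are the two exceptional graphs. By contrast, the chordlessness of $C$, the fact that $D$ is an edge cut, the ``smallest disconnecting subset is a cracker'' step, and the inequality forcing $g\le 4$ are all routine.
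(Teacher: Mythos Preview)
Your approach is sound and genuinely different from the paper's. The paper disposes of $N\ge 8$ in one line by citing Lou, Teng and Wu, who prove that every cubic graph on at least eight vertices has a cyclic edge cut of size equal to its girth; since a minimum cyclic edge cut is a cracker, the bound follows. The small cases $N\le 6$ are then handled by inspection. Your argument, by contrast, is self-contained: you build the candidate cut $D$ from a shortest cycle, reduce to $g\le 4$ when $D$ has adjacent edges via the cycle-length inequality, and then finish those two girths by repeatedly absorbing shared external endpoints. This buys independence from the cited result and makes the role of $\Gamma_4^*$ and $\Gamma_6^*$ transparent as the terminal cases of the absorption process, at the cost of the casework you flag.

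One slip to fix: you write ``$|S|\in\{4,5\}$ is odd, so $S\neq V(G)$'', but $4$ is even. The parity argument only covers $g=4$; for $g=3$ you instead need the observation (which you do make later) that $S=V(G)$ with $|S|=4$ forces $G=K_4=\Gamma_4^*$. Once that is stated up front, the absorption bookkeeping goes through: each absorption drops $|\partial(S)|$ by at least one, a boundary of size $1$ is a bridge (excluded), and exhausting $V(G)$ lands you in $K_4$ or $K_{3,3}$ exactly as you claim. The remaining verifications---chordlessness of $C$, that a minimal disconnecting subset of a non-adjacent cut is a cracker, and the inequality $g-2\le\lfloor g/2\rfloor$---are indeed routine.
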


\begin{proof}The cases of $\Gamma_4^*$ and $\Gamma_6^*$ can be confirmed by inspection. There is one other cubic graph containing 6 vertices, with girth 3, which contains a 3-cracker, as displayed in Figure \ref{fig-envelope}. So the lemma is true for $N < 8$.

For any cubic graph containing 8 or more vertices, it was proved in Lou et al \cite{lou} that there exists at least one cyclic edge cut set of size $g$. The smallest cyclic edge cut set in a cubic graph is a cracker, so it is clear that
the smallest cracker can be of size no bigger than $g$.\end{proof}

\begin{figure}[h!]\begin{center}\epsfig{scale=0.5, figure=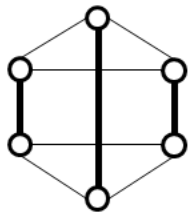}\\
\caption{A 6-vertex cubic graph, with girth 3, which contains a 3-cracker comprising the highlighted edges}\label{fig-envelope}\end{center}\end{figure}

In Section \ref{sec-results}, it is proved that any descendant graph can be
obtained from a set of genes. It is our hope that descendants inherit
many of their properties from the genes used to construct them. If so, any
analysis of such a descendant could be reduced to the problem of analysing
the component genes, which are often much smaller than the descendant.
The subsequent investigation of graph theoretic properties in sets of genes
is a natural topic for future research.

One such graph theoretic property of interest is
that of {\em Hamiltonicity}, that is, the property of containing
a simple cycle of length equal to the number of vertices in the graph.
We observe that non-Hamiltonian genes are extremely rare. Even excluding
the (trivially non-Hamiltonian) {\em bridge graphs}, non-Hamiltonian descendants
constitute a large majority of the remaining non-Hamiltonian graphs; see Table
\ref{tab:prevalanceNH}. The second column of that table, labelled
by $\mbox{NH}_1$, lists the percentages of bridge graphs relative to the
total cardinality of non-Hamiltonian graphs\footnote{For a more complete study of
the prevalence of cubic bridge graphs relative to the total set of cubic non-Hamiltonian
graphs, see Filar et al \cite{conjecturepaper}.}, denoted by $\mbox{NH}$. The third
column labelled by $\mbox{NH}_{2+} := \mbox{NH} \setminus \mbox{NH}_1$,
lists the percentages of graphs two or more cyclically edge connected
relative to the cardinality of $\mbox{NH}$. The fourth column
labelled by $\mbox{NH}_{4+} $, lists the percentages of graphs
four or more cyclically edge connected relative to the cardinality of
$\mbox{NH}$. Finally, the fifth column labelled by
$\mbox{NH}_{4+}/\mbox{NH}_{2+} $, lists the percentages of graphs
four or more cyclically edge connected relative to the cardinality of
all non-bridge, non-Hamiltonian, graphs in $\mbox{NH}_{2+}$. We
shall define graphs in $\mbox{NH}_{4+}$ as {\em mutants}, a name
that properly reflects their exceptionality.  For instance, we note
from the fourth column of Table \ref{tab:prevalanceNH} that with
$N=18$ only $0.12$ of one percent are mutants, which corresponds to
two (out of 1666) non-Hamiltonian cubic graphs on $18$ vertices.
These two mutants are the famous Blanu\u{s}a Snarks \cite{blanusa}.

In fact, it is no coincidence that the Blanu\u{s}a Snarks appear as mutants
in this framework. In Read and Wilson \cite{readwilson}, the
definition of an irreducible Snark is given as a cubic graph
with edge chromatic number of 4, girth 5 or more, and not containing
three edges whose deletion results in a disconnected graph, each of
whose components is nontrivial. This final condition, along with the
well known fact that all Snarks are non-Hamiltonian, is akin to
our definition of a mutant. Therefore, the set of all mutants is a superset
of the set of all irreducible Snarks. However, some non-Snark mutants do exist,
and therefore have either girth 4 or an edge chromatic number of 3 (or both).
In particular, the BH-Mutant, displayed in Figure \ref{fig: BHmutant} is the smallest
non-Snark mutant, and has an edge chromatic number of 3. There are 16 further non-Snark
mutants of size 22, one of which is the Zircon-Mutant, also displayed in
Figure \ref{fig: BHmutant}. The Zircon-Mutant also has an edge chromatic
number of 3.

\begin{table}[h!]
\begin{center}
\begin{tabular}{|c|c|c|c|c|}
  \hline
            & $\mbox{NH}_1$     & $\mbox{NH}_{2+}$    & NH$_{4+}$ &$\mbox{NH}_{4+}/\mbox{NH}_{2+}$ \\ \hline
  $10$ Vertex & $50.00\%$ & $50.00\%$ & $50.00\%$ &$100\%$\\ \hline
  $12$ Vertex & $80.00\%$ & $20.00\%$ & $0\%$ &$0\%$\\ \hline
  $14$ Vertex & $82.86\%$ & $17.14\%$ & $0\%$ &$0\%$\\ \hline
  $16$ Vertex & $84.93\%$ & $15.07\%$ & $0\%$ &$0\%$\\ \hline
  $18$ Vertex & $86.13\%$ & $13.86\%$ & $0.12\%$ &$0.86\%$\\ \hline
  $20$ Vertex & $87.40\%$ & $12.60\%$ & $0.05\%$ &$0.38\%$\\ \hline
  $22$ Vertex & $88.59\%$ & $11.41\%$ & $0.02\%$ &$0.21\%$\\ \hline
\end{tabular}
\caption{Distribution of Non-Hamiltonian (NH) C$k$-connected graphs} \label{tab:prevalanceNH}
\end{center}
\end{table}
\begin{figure}[h!]\begin{center}\epsfig{scale=0.54, figure=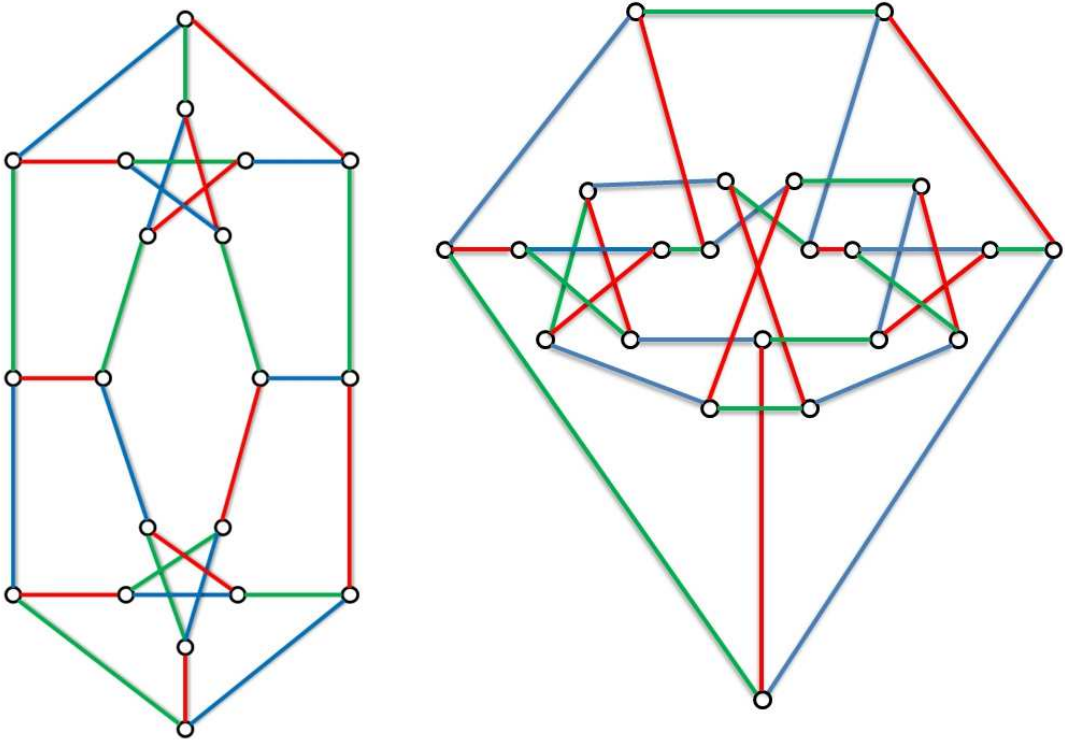}\\
\caption{BH-Mutant which is a girth 5, C4-connected non-Hamiltonian
non-Snark mutant containing 20 vertices, and Zircon-Mutant which is a girth 5,
C5-connected non-Hamiltonian non-Snark mutant containing 22 vertices.
The colours identify Tait Colorings}\label{fig:
BHmutant}\end{center}\end{figure}

\section{Breeding and parthenogenic operations}\label{sec: breeding}

From Definition \ref{def-gene_descendant}, it is clear that genes do not contain
any 1-crackers, $2$-crackers, or $3$-crackers. Collectively, we
refer to 1-crackers, $2$-crackers and $3$-crackers as {\em cubic
crackers}. As a corollary, descendants must contain at least one cubic
cracker. It then seems plausible that we might be able to construct any given
descendant by combining two or more cubic graphs together in such
a fashion as to create the cubic crackers present in that
descendant graph. Since there are three different types of cubic
crackers, we define three {\em breeding operations} that map two
cubic graphs to a single descendant by inserting a cubic cracker
between them in such a fashion as to retain cubicity. In such a
case, we say that the descendant has been obtained by {\em
breeding}. We refer to the original two cubic graphs as the {\em
parents} of the descendant graph, and likewise the descendant
graph is the {\em child} of the two parents.

Note that the following operations are defined only for cubic graphs. Although
they work for disconnected cubic graphs, in this manuscript we are interested
only in connected cubic graphs, and make the assumption that all input graphs are indeed connected and cubic.

\subsection{Breeding operations}

\begin{definition}
A {\em type 1 breeding operation} is a function $\mathcal{B}_1$
defined on the tuple $(\Gamma_1,\Gamma_2,e_1,e_2)$, where
$\Gamma_1 = \left<V_1,E_1\right>$ and $\Gamma_2 =
\left<V_2,E_2\right> $ are cubic graphs, and furthermore, $e_1 =
(a,b) \in E_1$ and $e_2 = (c,d) \in E_2$. This function maps such
a tuple onto another tuple $(\Gamma_D, e)$ as follows
\begin{eqnarray}
\mathcal{B}_1(\Gamma_1,\Gamma_2,e_1,e_2) & = & (\Gamma_D,
e),\nonumber\end{eqnarray}

where $\Gamma_D = \left<V_D,E_D\right>$ and $ \{ e = (v_1, v_2) \}
\in E_D$. The new set of vertices is $V_D = V_1\; \cup \;V_2
\;\cup \;v_1 \;\cup \;v_2$. The new set of edges is $E_D = (E_1
\backslash e_1) \;\cup\; (E_2 \backslash e_2) \;\cup\;
\{(a,v_1),(b,v_1),(c,v_2),(d,v_2),(v_1,v_2)\}$.\label{def-breed1}\end{definition}

\begin{figure}[h]\begin{center}
\hspace*{0cm}\epsfig{scale=0.35, figure=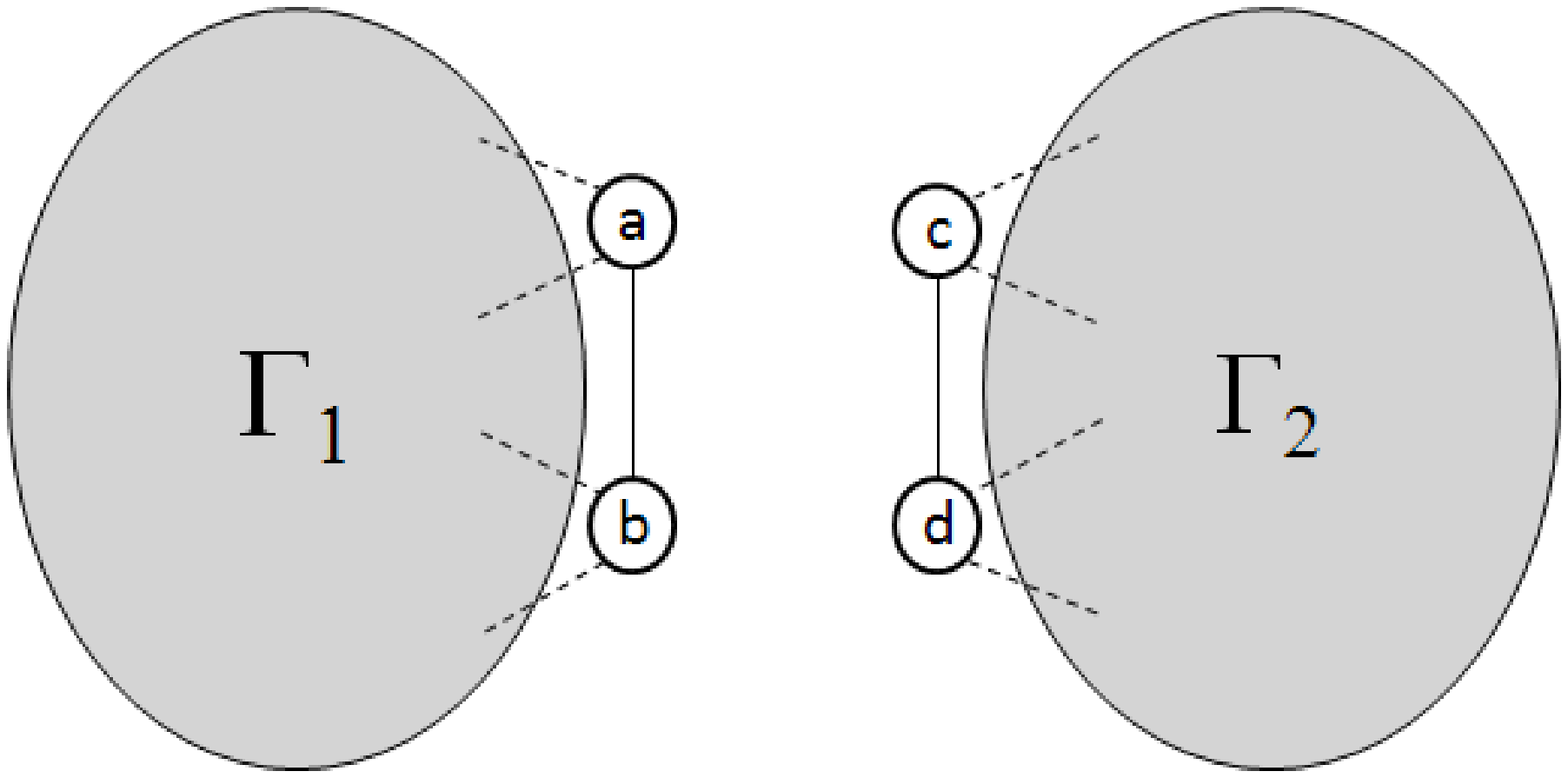}\hspace*{1.0cm}\epsfig{scale=0.35, figure=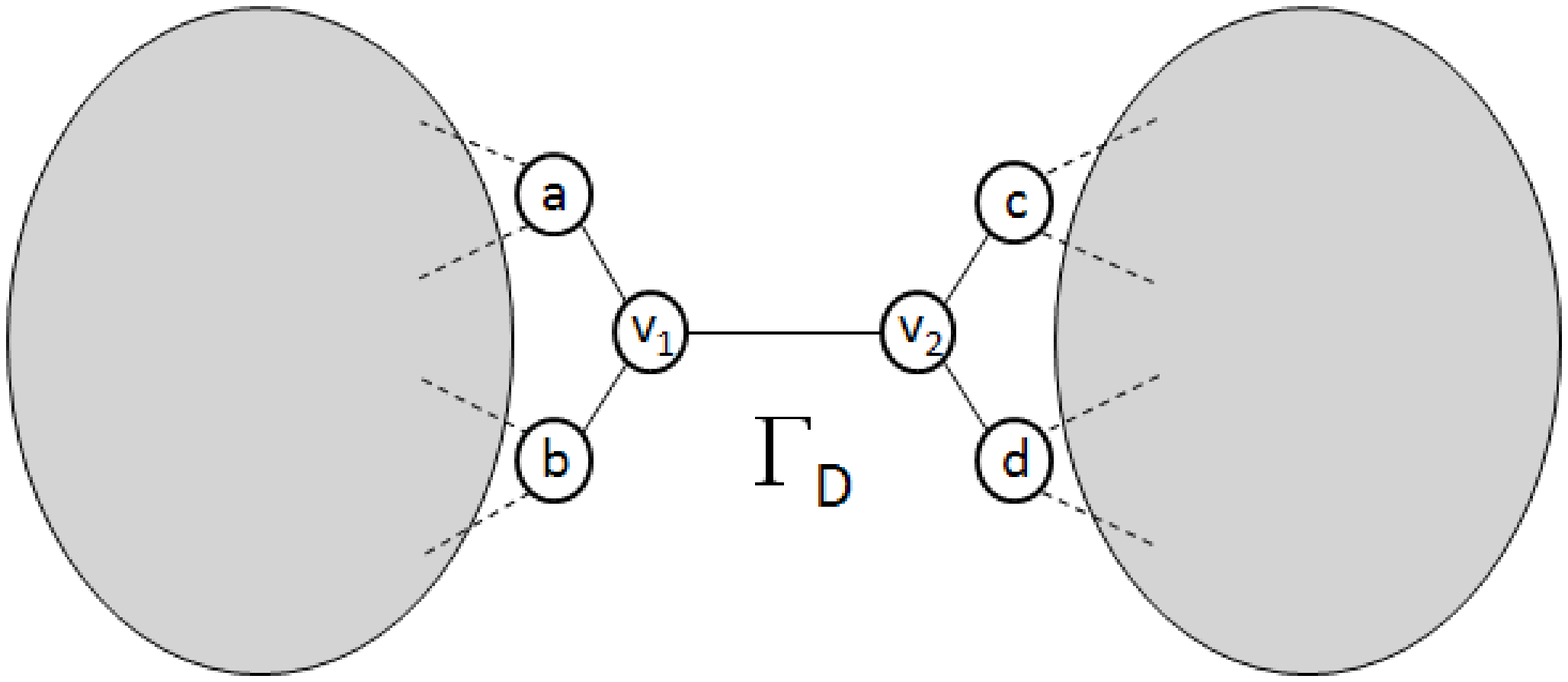}\\
\caption{Graphs $\Gamma_1$, $\Gamma_2$ and $\Gamma_D$ as described
in Definition \ref{def-breed1}}\label{fig-type_1_breeding}
\end{center}\end{figure}

Note that a type 1 breeding operation always outputs a {\em bridge
graph} (that is, a C1-connected graph). See Figure
\ref{fig-type_1_breeding} for an illustration.

\begin{definition}
A {\em type 2 breeding operation} is a function $\mathcal{B}_2$
defined on the tuple $(\Gamma_1,\Gamma_2,e_1,e_2)$, where
$\Gamma_1 = \left<V_1,E_1\right>$ and $\Gamma_2 =
\left<V_2,E_2\right> $ are cubic graphs, and furthermore, $e_1 =
(a,b) \in E_1$ and $e_2 = (c,d) \in E_2$ and neither edge is a
$1$-cracker. This function maps such a tuple onto another tuple
$(\Gamma_D, e_3, e_4)$ as follows
\begin{eqnarray}\mathcal{B}_2(\Gamma_1,\Gamma_2,e_1,e_2) & = & (\Gamma_D, e_3, e_4),\nonumber\end{eqnarray}

where $\Gamma_D = \left<V_D,E_D\right>$ and $ \{ e_3 = (a,c) , e_4
= (b,d) \} \in E_D$. The new set of vertices is $V_D = V_1\;
\cup \;V_2$. The new set of edges is $E_D = (E_1 \backslash e_1)
\;\cup\; (E_2 \backslash e_2) \;\cup\;
\{(a,c),(b,d)\}$.\label{def-breed2}\end{definition}

\begin{figure}
[h]\begin{center}\hspace*{0cm}\epsfig{scale=0.35, figure=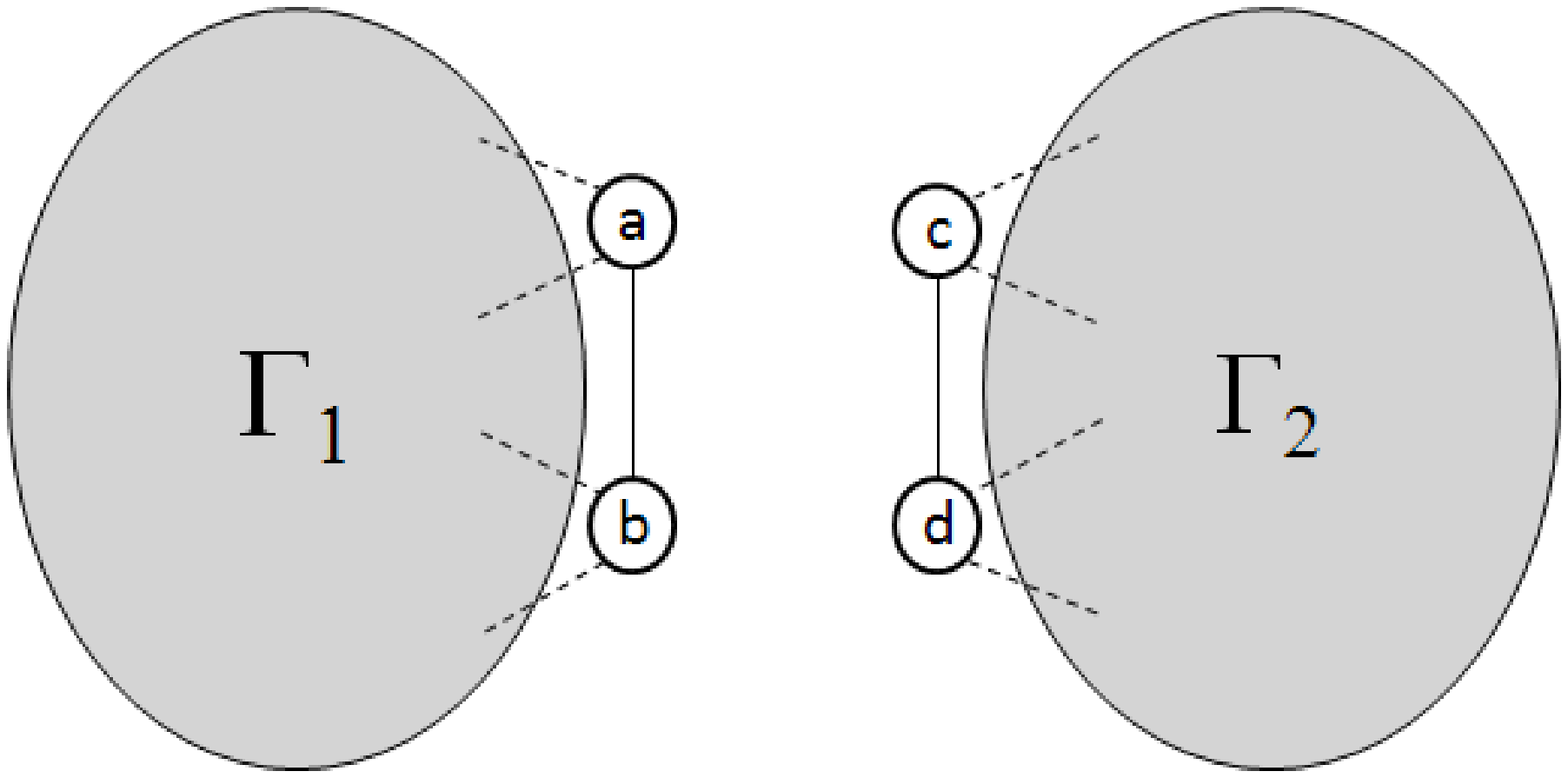}\hspace*{0.8cm}\epsfig{scale=0.35, figure=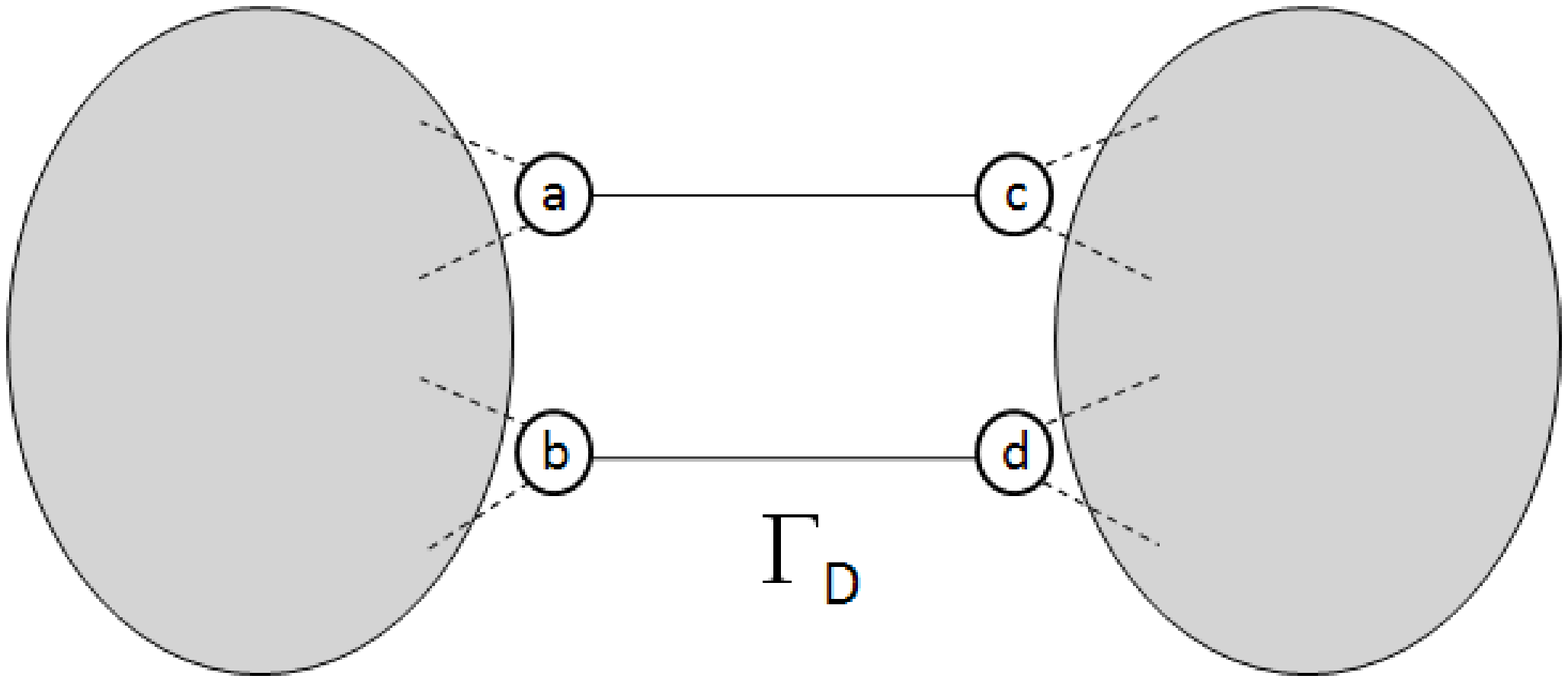}\\
\caption{Graphs $\Gamma_1$, $\Gamma_2$ and $\Gamma_D$ as described
in Definition
\ref{def-breed2}}\label{fig-type_2_breeding}\end{center}
\end{figure}

Clearly $\Gamma_D$ contains the $2$-cracker $\{(a,c),(b,d)\}$. See
Figure \ref{fig-type_2_breeding} for an illustration. Note also
that a type 2 breeding operation always creates a 2-edge-connected
descendant, unless either of $\Gamma_1$ or $\Gamma_2$ is
1-edge-connected (in which case, $\Gamma_D$ is also
1-edge-connected).

\begin{definition}
A {\em type 3 breeding operation} is a function $\mathcal{B}_3$
defined on the tuple $(\Gamma_1,\Gamma_2,v_1,v_2,a,b,\\
c,d,e,f)$, where $\Gamma_1 = \left<V_1,E_1\right>$ and $\Gamma_2 =
\left<V_2,E_2\right> $ are cubic graphs, and furthermore, $v_1 \in
V_1$ is incident to vertices $a$ , $b$ and $c \in V_1$ and $v_2
\in V_2$ is incident to vertices $d$, $e$ and $f \in V_2$. None
of the edges adjacent to $v_1$ or $v_2$ are $1$-crackers. This
function maps such a tuple onto another tuple $(\Gamma_D, e_1,
e_2, e_3)$ as follows
\begin{eqnarray}\mathcal{B}_3(\Gamma_1,\Gamma_2,v_1,v_2,a,b,c,d,e,f) & = & (\Gamma_D, e_1,
e_2, e_3),\nonumber\end{eqnarray}

where $\Gamma_D = \left<V_D,E_D\right>$, and also, $ \{e_1 = (a,d) ,e_2=(b,e) , e_3 = (c,f)\} \in E_D$). The new set of vertices is $V_D = (V_1 \backslash v_1) \;\cup\; (V_2 \backslash v_2)$. The new set of edges is $E_D = (E_1 \backslash \{(v_1,a),(v_1,b),(v_1,c)\}) \;\cup\; (E_2 \backslash \{(v_2,d),(v_2,e),(v_2,f)\}) \cup\; \{(a,d),(b,e),(c,f)\}$. \label{def-breed3}
\end{definition}

\begin{figure}[h]\begin{center}\hspace*{0cm}\epsfig{scale=0.35, figure=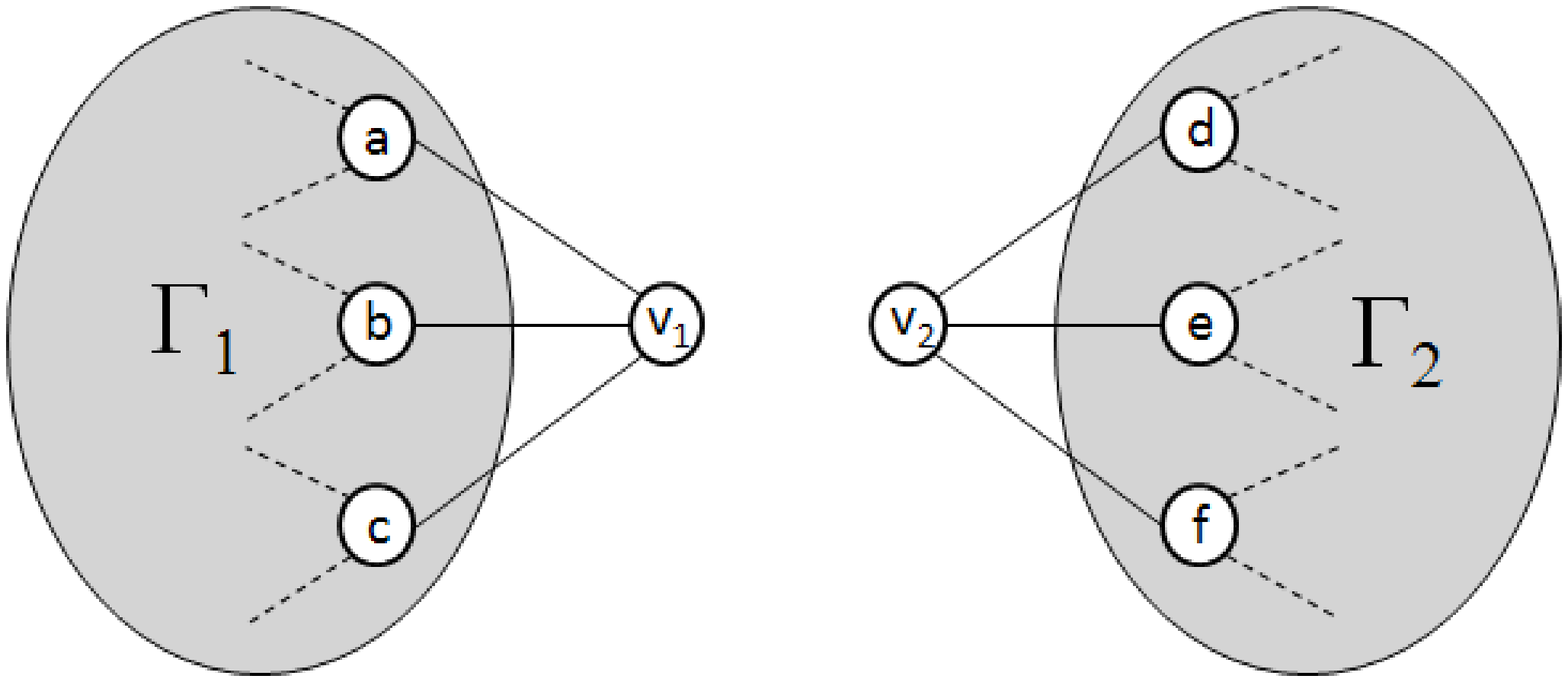}\hspace*{1.0cm}\epsfig{scale=0.35, figure=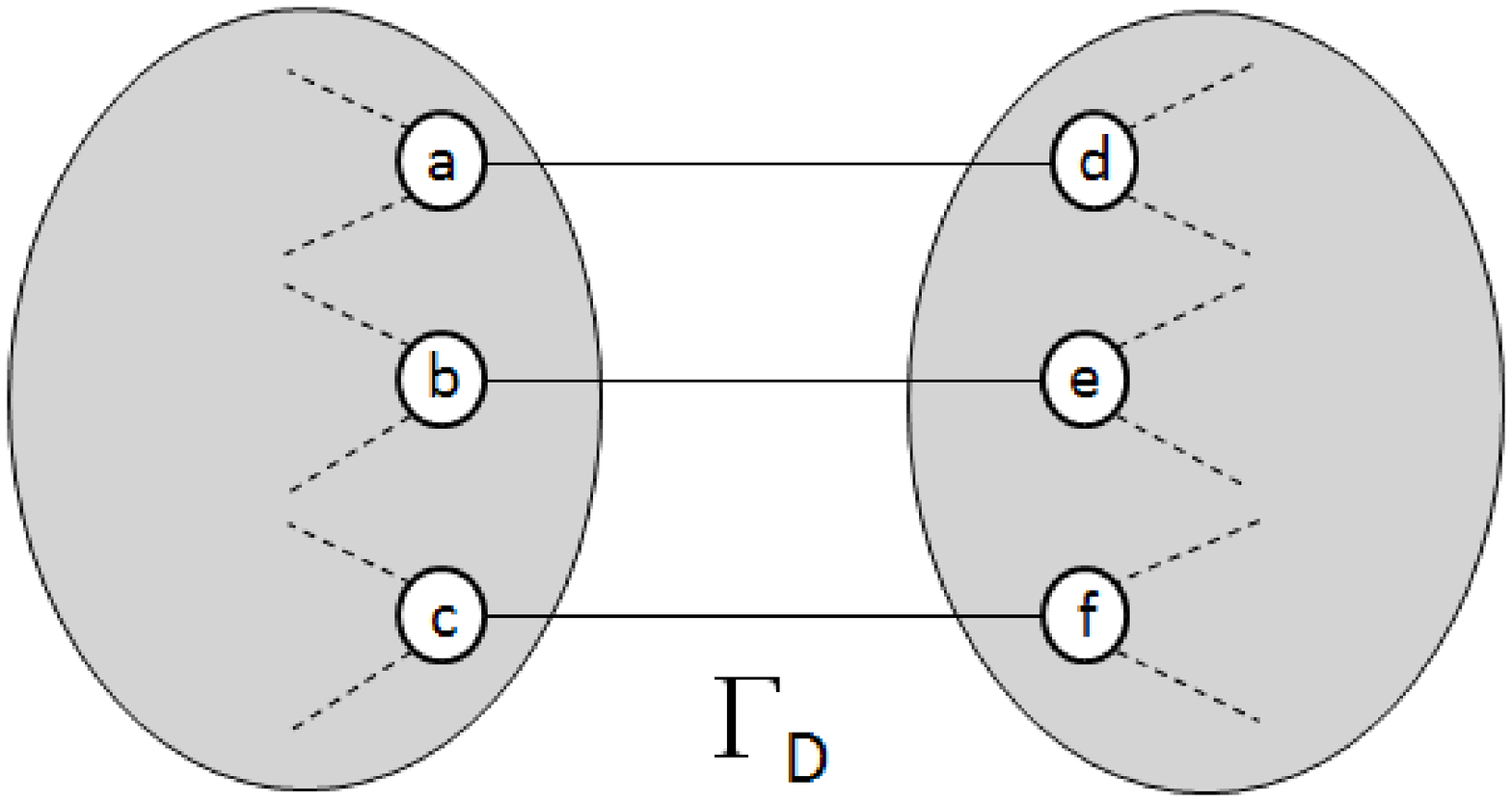}\\
\caption{Graphs $\Gamma_1$, $\Gamma_2$ and $\Gamma_D$ as described
in Definition
\ref{def-breed3}}\label{fig-type_3_breeding}\end{center}\end{figure}

See Figure \ref{fig-type_3_breeding} for an illustration of type 3
breeding.

\subsection{Parthenogenic operations}

In addition to the preceding three breeding operations, we also define three
{\em parthenogenic operations}. These are operations that map a
single descendant to a new, more complex, descendant by replacing
a cracker in the original descendant with two new crackers. We say
that such a new descendant has been obtained from {\em
parthenogenesis}. For simplicity of terminology, we again refer to
the original descendant as the {\em parent} of the new descendant,
and likewise we refer to the new descendant as the {\em child} of
the original descendant. Also for simplicity of terminology, we refer
to the the three breeding operations and the three parthenogenic
operations collectively as the six {\em breeding operations}.

\begin{definition}
A {\em type 1 parthenogenic operation} is a function
$\mathcal{P}_1$ defined on the tuple $(\Gamma_1,e_1)$ where
$\Gamma_1 = \left<V_1,E_1\right>$ is a bridge graph and $e_1 =
(a,b) \in E_1$ is a 1-cracker. This function maps such a tuple
onto another tuple $(\Gamma_D, v_1, v_4)$ as follows
\begin{eqnarray}\mathcal{P}_1(\Gamma_1,e_1) & = & (\Gamma_D, v_1, v_4),\nonumber\end{eqnarray}

where $\Gamma_D = \left<V_D,E_D\right>$ and $\{v_1 , v_4\} \in V_D$.
The new set of vertices is $V_D = V_1 \;\cup\;
\{v_1,v_2,v_3,v_4\}$. The new set of edges is $E_D = (E_1
\backslash e_1) \;\cup\;
\{(a,v_1),(v_1,v_2),(v_1,v_3),(v_2,v_3),(v_2,v_4),(v_3,v_4),\\(v_4,b)\}$. This process inserts an additional
$1$-cracker into $\Gamma_D$.
\label{def-parth1}\end{definition}

\begin{figure}[h]\begin{center}\epsfig{scale=0.35, figure=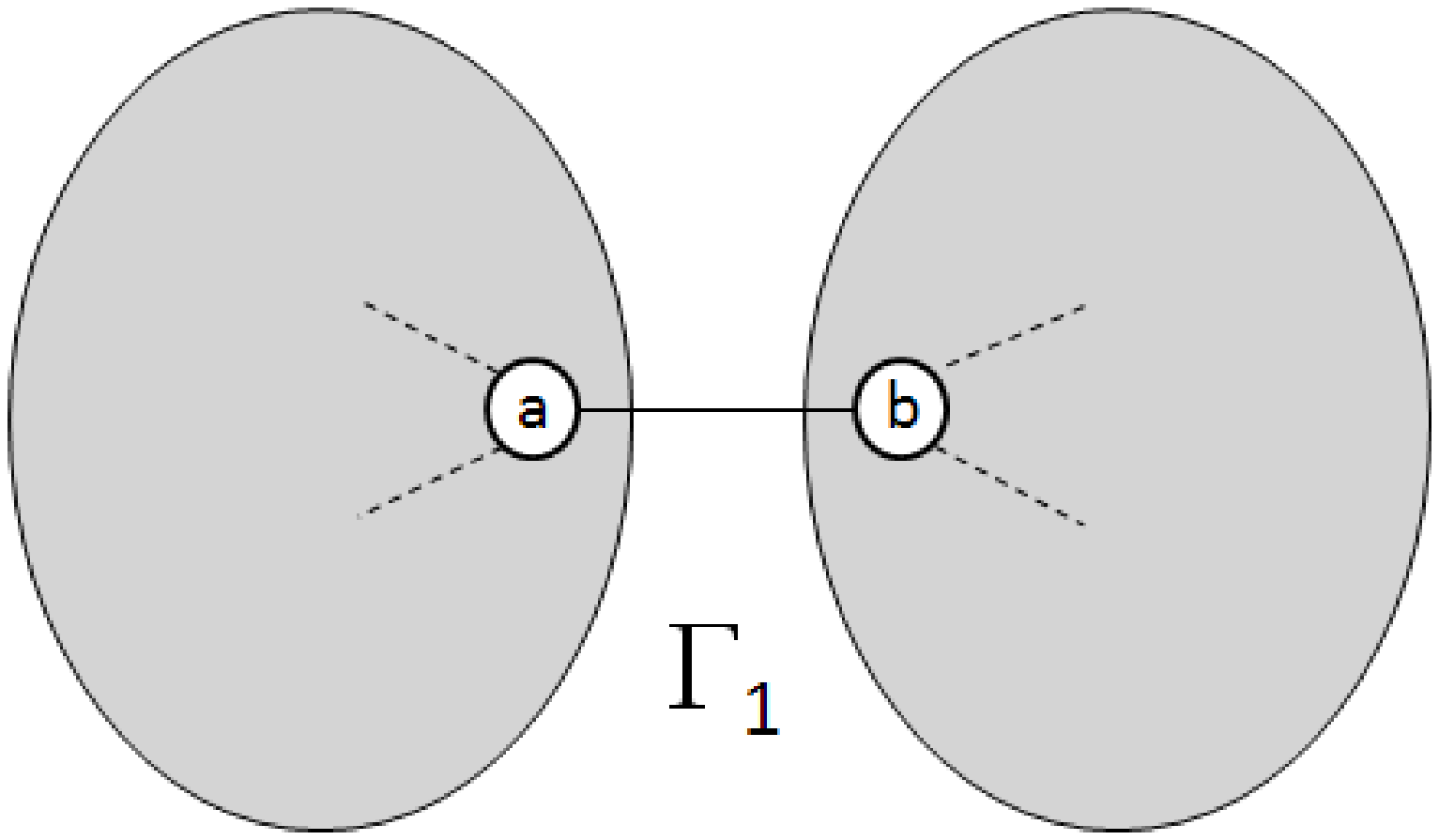}\hspace*{1.5cm}\epsfig{scale=0.35, figure=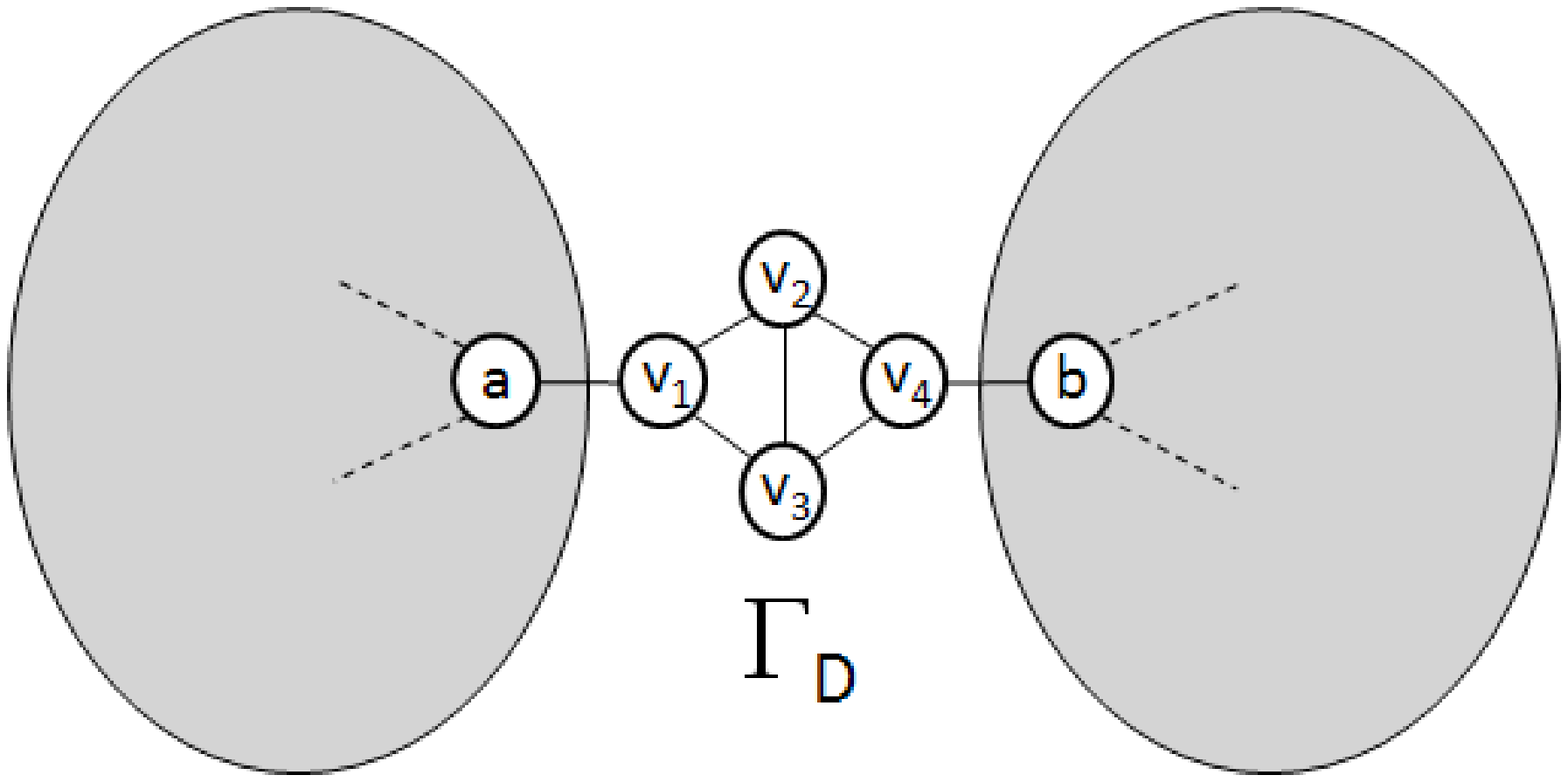}\\
\caption{Graphs $\Gamma_1$ and $\Gamma_D$ as described in
Definition
\ref{def-parth1}}\label{fig-type_1_parthenogenisis}\end{center}\end{figure}

We refer to the subgraph $\Gamma_S =
\left<\{v_1,v_2,v_3,v_4\},\{(v_1,v_2),(v_1,v_3),(v_2,v_3),(v_2,v_4),(v_3,v_4)\}\right>$ as the {\em parthenogenic diamond}, and say
that a type 1 parthenogenic operation inserts a parthenogenic
diamond into a bridge. See Figure \ref{fig-type_1_parthenogenisis}
for an illustration.

\begin{figure}[h]\begin{center}\epsfig{scale=0.30, figure=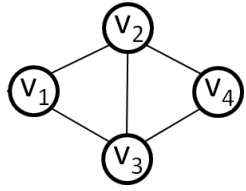}\\
\caption{A parthenogenic
diamond}\label{fig-parthenogenic_diamond}\end{center}\end{figure}

\begin{definition}
A {\em type 2 parthenogenic operation} is a function
$\mathcal{P}_2$ defined on the tuple $(\Gamma_1,e_1,e_2)$ where
$\Gamma_1 = \left<V_1,E_1\right>$ is a cubic graph containing a
$2$-cracker comprising two edges $e_1 = (a,b)$ and $e_2 = (c,d)$.
This function maps such a tuple onto another tuple $(\Gamma_D,
v_1, v_2)$ as follows

\begin{eqnarray}\mathcal{P}_2(\Gamma_1,e_1,e_2) & = & (\Gamma_D,
v_1, v_2),\nonumber\end{eqnarray}

where $\Gamma_D = \left<V_D,E_D\right>$ and $\{ v_1, v_2 \} \in
V_D$. The new set of vertices is $V_D = V_1 \;\cup\; \{v_1,v_2\}$.
The new set of edges is $E_D = (E_1 \backslash \{e_1,e_2\})
\;\cup\; \{(a,v_1),(b,v_1),(c,v_2),(d,v_2),(v_1,v_2)\}$. This
process inserts an additional $2$-cracker into
$\Gamma_D$.\label{def-parth2}\end{definition}

\begin{figure}[h]\begin{center}\hspace*{0cm}\epsfig{scale=0.35, figure=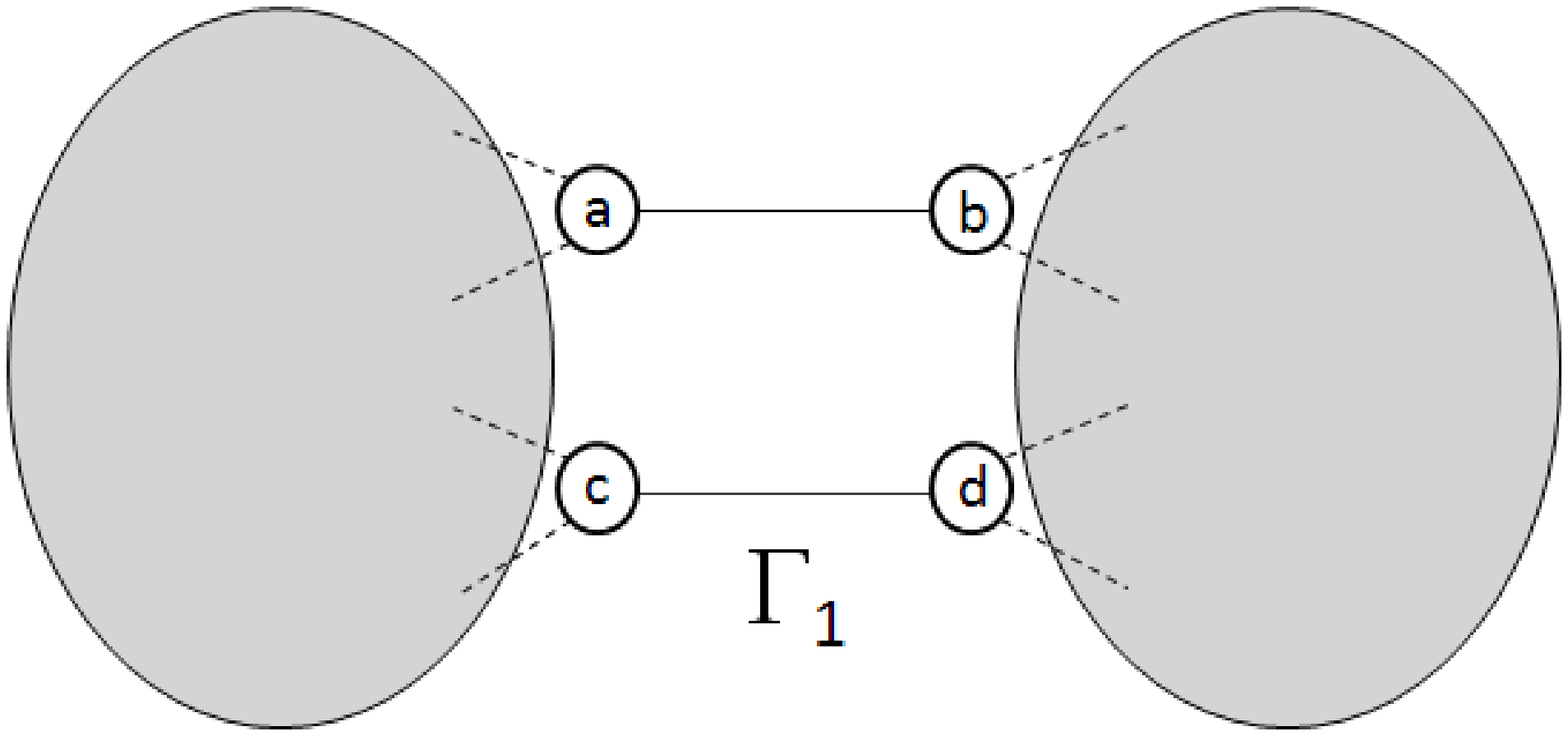}\hspace*{0.7cm}\epsfig{scale=0.35, figure=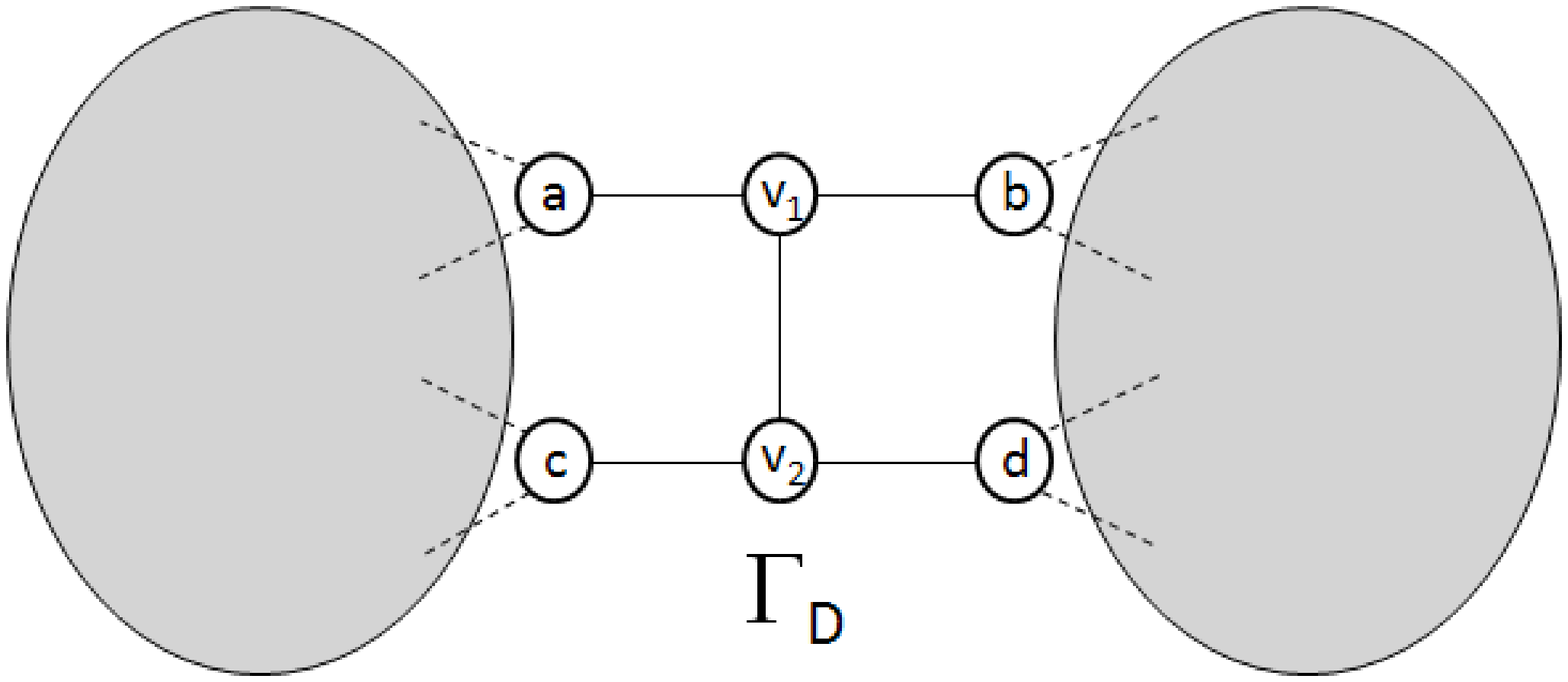}\\
\caption{Graphs $\Gamma_1$ and $\Gamma_D$ as described in
Definition
\ref{def-parth2}}\label{fig-type_2_parthenogenisis}\end{center}\end{figure}

We refer to the subgraph $\Gamma_S =
\left<\{v_1,v_2\},(v_1,v_2)\right>$ as the {\em parthenogenic
bridge}, and say that a type 2 parthenogenic operation inserts
a parthenogenic bridge into a $2$-cracker. See Figure
\ref{fig-type_2_parthenogenisis} for an illustration.

\begin{figure}[h]\begin{center}\epsfig{scale=0.30, figure=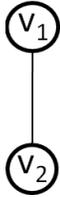}\\
\caption{A parthenogenic
bridge}\label{fig-parthenogenic_bridge}\end{center}\end{figure}
\begin{definition}
A {\em type 3 parthenogenic operation} is a function
$\mathcal{P}_3$ defined on the tuple $(\Gamma_1,a)$ where
$\Gamma_1 = \left<V_1,E_1\right>$ is a bridge graph and $a \in
V_1$ is a vertex incident to a $1$-cracker composing an edge $e_1
= (a,b) \in E_1$ and is adjacent to vertices $c$ and $d \in V_1$. This
function maps such a tuple onto another tuple $(\Gamma_D, a, v_1,
v_2)$ as follows
\begin{eqnarray}\mathcal{P}_3(\Gamma_1,a) & = & (\Gamma_D, a, v_1, v_2),\nonumber\end{eqnarray}

where $\Gamma_D = \left<V_D,E_D\right>$ and $\{a, v_1, v_2 \} \in
V_D$. The new set of vertices is $V_D = V_1 \;\cup\; \{v_1,v_2\}$.
The new set of edges is $E_D = (E_1 \backslash \{(a,c),(a,d)\})
\;\cup\;
\{(a,v_1),(a,v_2),(v_1,v_2),(v_1,c),(v_2,d)\}$.\label{def-parth3}\end{definition}

\begin{figure}[h]\begin{center}\hspace*{0cm}\epsfig{scale=0.35, figure=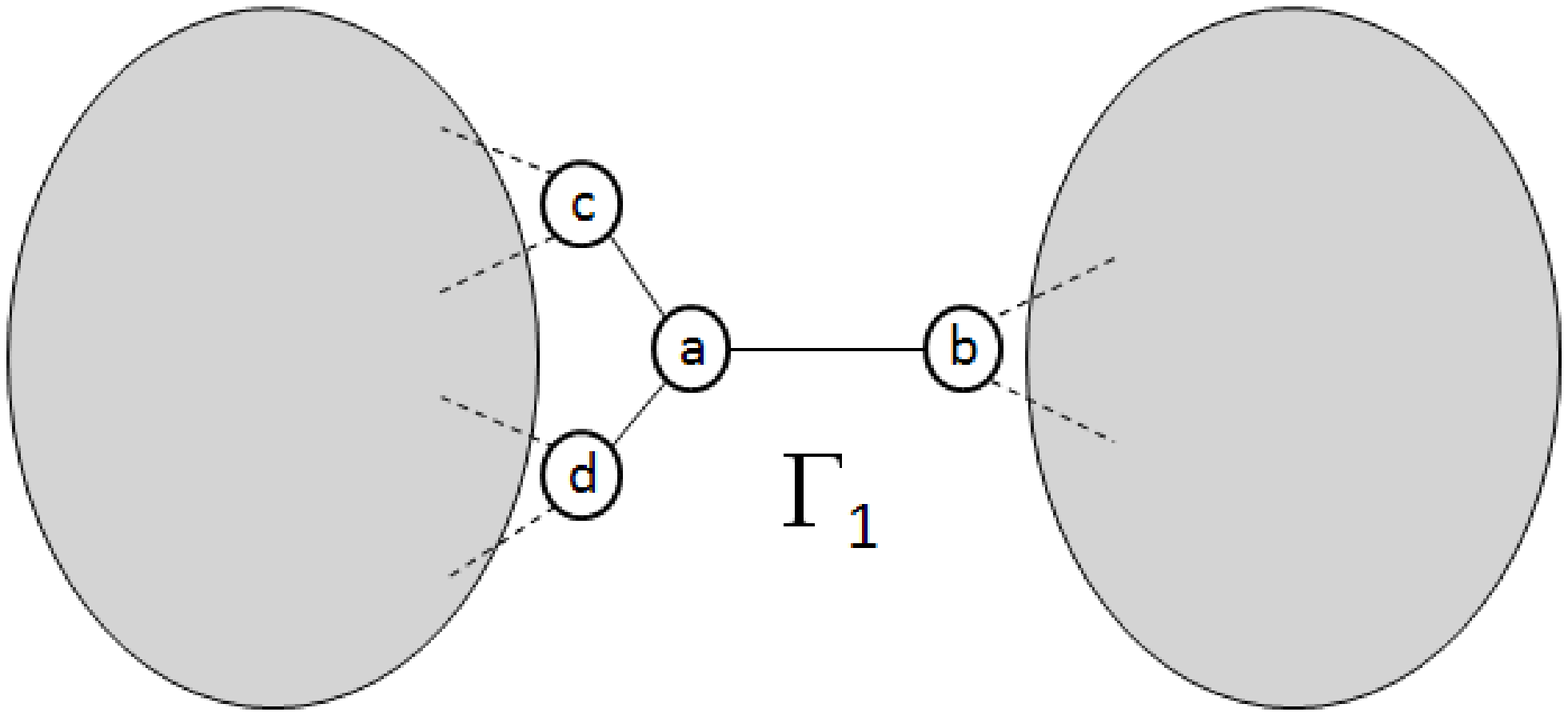}\hspace*{0.7cm}\epsfig{scale=0.35, figure=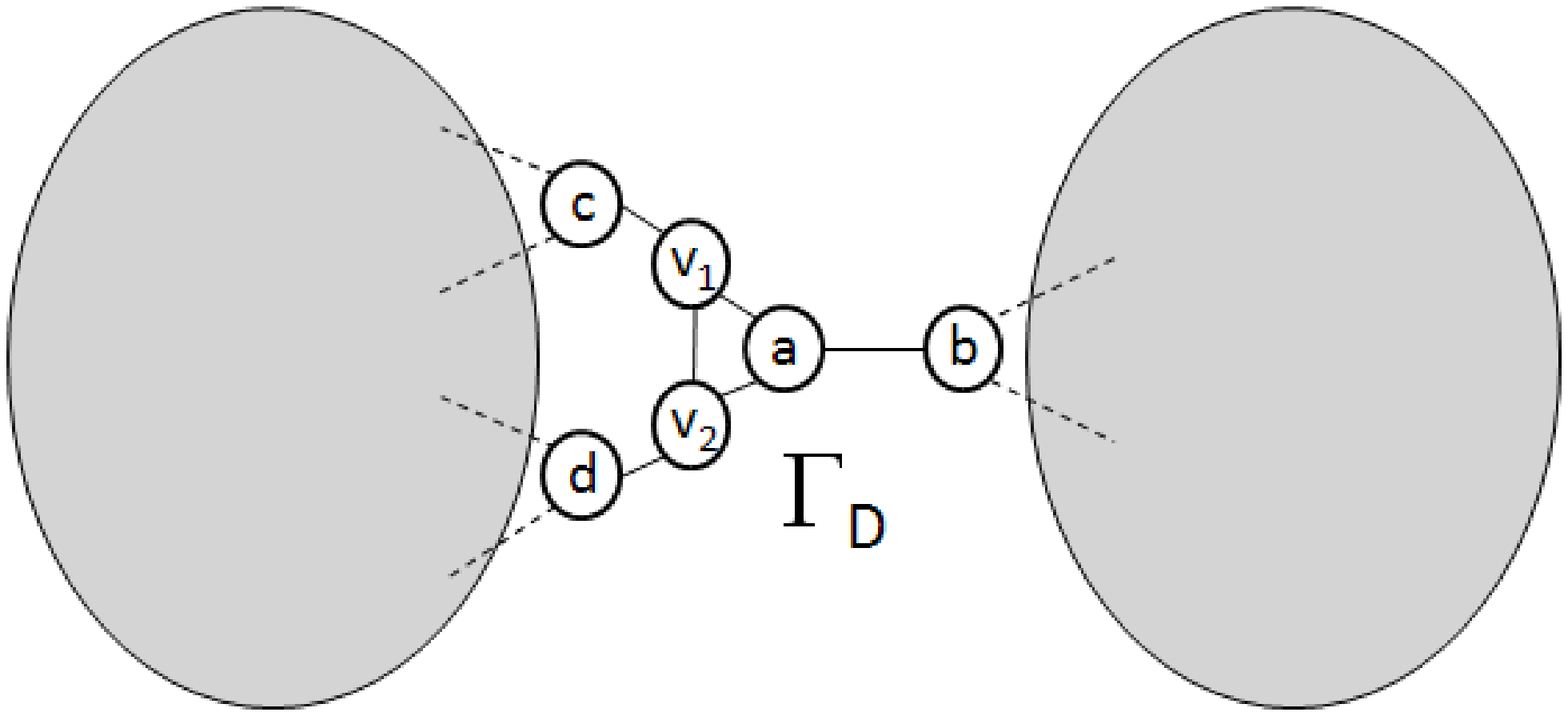}\\
\caption{Graphs $\Gamma_1$ and $\Gamma_D$ as described in
Definition
\ref{def-parth3}}\label{fig-type_3_parthenogenisis}\end{center}\end{figure}

We refer to the subgraph $\Gamma_S =
\left<\{a,v_1,v_2\},\{(a,v_1),(a,v_2),(v_1,v_2)\}\right>$ as the
{\em parthenogenic triangle}, and say that a type 3 parthenogenic
operation inserts a parthenogenic triangle next to the
1-cracker. See Figure \ref{fig-type_3_parthenogenisis} for an
illustration.

\begin{figure}[h]\begin{center}\epsfig{scale=0.30, figure=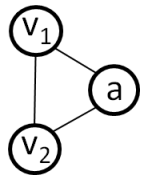}\\
\caption{A parthenogenic
triangle}\label{fig-parthenogenic_triangle}\end{center}\end{figure}

\begin{lemma}A child graph resulting from any of the six breeding operations is connected.\end{lemma}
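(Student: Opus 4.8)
The plan is to treat the six breeding operations uniformly by observing that in every case the child graph $\Gamma_D$ is built by taking one or two connected cubic graphs, deleting a small set of edges (or a vertex together with its incident edges), possibly adding a bounded number of new vertices, and then reconnecting everything with a prescribed set of new edges. So the proof will be a case analysis over the six operations, but each case reduces to the same elementary connectivity bookkeeping: after the deletions, each parent is broken into at most a few pieces, and one checks that the newly added edges (and vertices) splice those pieces back together into a single connected graph.

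Concretely, first I would dispose of the three breeding operations $\mathcal{B}_1,\mathcal{B}_2,\mathcal{B}_3$. Here we start with connected cubic graphs $\Gamma_1$ and $\Gamma_2$. For $\mathcal{B}_1$, deleting the single edge $e_1=(a,b)$ from $\Gamma_1$ leaves a graph in which $a$ and $b$ lie in the same or in two components; in either case, after adding the new vertex $v_1$ with edges $(a,v_1),(b,v_1)$, the vertices of $V_1$ together with $v_1$ form a connected subgraph (since any two vertices of $\Gamma_1$ were joined by a path, and if that path used $e_1$ we reroute through $v_1$). Symmetrically for $\Gamma_2$ and $v_2$, and then the bridge edge $(v_1,v_2)$ joins the two halves, so $\Gamma_D$ is connected. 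For $\mathcal{B}_2$ we delete $e_1=(a,b)$ from $\Gamma_1$ and $e_2=(c,d)$ from $\Gamma_2$; the hypothesis that $e_1,e_2$ are not $1$-crackers means $\Gamma_1\setminus e_1$ and $\Gamma_2\setminus e_2$ are each still connected. Then the two new edges $(a,c),(b,d)$ each join a vertex of $\Gamma_1$ to a vertex of $\Gamma_2$, so the union is connected. For $\mathcal{B}_3$ we delete the vertices $v_1\in V_1$ and $v_2\in V_2$ together with their three incident edges; the hypothesis that none of those edges is a $1$-cracker guarantees $\Gamma_1\setminus v_1$ and $\Gamma_2\setminus v_2$ remain connected (removing a degree-$3$ vertex whose incident edges are non-bridges cannot disconnect a graph, since any path that passed through $v_1$ can be rerouted), and then any of the new edges $(a,d),(b,e),(c,f)$ bridges the two pieces.

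Next I would handle the three parthenogenic operations $\mathcal{P}_1,\mathcal{P}_2,\mathcal{P}_3$, which each act on a single connected graph $\Gamma_1$. In each case the operation deletes one edge (or two edges incident to a common vertex) from $\Gamma_1$ and inserts a small connected gadget — the parthenogenic diamond, bridge, or triangle — in its place, together with the edges attaching the gadget to the rest of $\Gamma_1$. For $\mathcal{P}_1$ we delete the $1$-cracker $e_1=(a,b)$, which splits $\Gamma_1$ into exactly two components $A\ni a$ and $B\ni b$; the inserted path $a\text{--}v_1\text{--}\cdots\text{--}v_4\text{--}b$ (through the diamond, which is internally connected) reconnects $A$ to $B$, and every new vertex $v_1,v_2,v_3,v_4$ lies on this gadget which is attached at both ends, so $\Gamma_D$ is connected. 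For $\mathcal{P}_2$ we delete the $2$-cracker $\{e_1,e_2\}$, which by the earlier lemma splits $\Gamma_1$ into exactly two components; the new vertices $v_1,v_2$ and the edges $(a,v_1),(b,v_1),(c,v_2),(d,v_2),(v_1,v_2)$ reattach both components and glue in $v_1,v_2$. For $\mathcal{P}_3$ we delete the two edges $(a,c),(a,d)$ both incident to $a$; since $\Gamma_1$ is connected and $a$ still has the edge $(a,b)$, and $c,d$ are reattached via $v_1,v_2$ with $(v_1,c),(v_2,d),(a,v_1),(a,v_2),(v_1,v_2)$, the graph stays connected — any path in $\Gamma_1$ through $a$ via $c$ or $d$ can be rerouted through $v_1$ or $v_2$.

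The only genuinely non-routine point — and the one I would state carefully rather than wave at — is the claim, used in the $\mathcal{B}_3$ and $\mathcal{P}_3$ cases, that deleting a vertex of degree three whose incident edges are not $1$-crackers leaves the graph connected. This does not follow from the non-bridge hypothesis on the edges alone if one is careless, so I would argue it directly: let $w,x,y$ be the neighbours of $v_1$; since none of $(v_1,w),(v_1,x),(v_1,y)$ is a bridge, in $\Gamma_1\setminus(v_1,w)$ there is a $v_1$--$w$ path, and this path must leave $v_1$ via $x$ or $y$, hence gives a $w$--$x$ or $w$--$y$ path avoiding $v_1$; running this argument for all three incident edges shows $w,x,y$ all lie in one component of $\Gamma_1\setminus v_1$, and since every other vertex of $\Gamma_1$ reaches one of $w,x,y$ by a path that can be taken to avoid $v_1$ (shortcut through whichever of $w,x,y$ the path hits first), $\Gamma_1\setminus v_1$ is connected. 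With that lemma in hand, all six cases are immediate, and I expect the write-up to be short.
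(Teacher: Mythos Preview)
Your proof is correct and follows essentially the same case-by-case approach as the paper. In fact you are more careful than the paper on two points: you correctly treat $\mathcal{P}_2$ as removing a $2$-cracker (the paper lumps it with the single-edge cases), and you actually justify the claim, used for $\mathcal{B}_3$, that deleting a degree-$3$ vertex none of whose incident edges is a bridge keeps the graph connected, whereas the paper simply asserts this.
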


\begin{proof}The nature of the six breeding operations is that the parent graphs
are mostly unaltered, and are changed only in a neighbourhood of the introduced cubic cracker. Therefore,
we can focus just on these areas. Since the parent graphs are (by definition) connected to
begin with, we only need to be concerned with which edges, present in the parent graphs,
are not present in the child graph.

For the cases of type 2 breeding, and types 1 and 2 parthenogenesis, only a single
edge from the parent graph (or from each of the parent graphs in the case of type 2 breeding)
is missing in the child graph. By definition this edge cannot be a 1-cracker, and therefore, the parent
graphs remain connected, and by construction it is clear that the adjoining cracker ensures
the child graph is also connected.

For type 1 breeding, only a single edge is removed from each parent graph. If neither edge is a 1-cracker,
then the argument in the previous paragraph can be used to show the child graph is connected. However, it is possible
that one or both removed edges could be 1-crackers. If so, the corresponding parent graphs become disconnected. However,
if this is the case, the graphs are reconnected by the introduction of vertices $v_1$ and $v_2$ (see Definition \ref{def-breed1}).
It is then clear by construction that the adjoining cracker ensures the child graph is also connected.

For type 3 breeding, a vertex is removed from both parent graphs. From Definition \ref{def-breed3},
we know that none of the edges adjacent to these two vertices constitute 1-crackers. Therefore, the
removal of these vertices cannot disconnect either graph. By construction it is then clear
that the adjoining cracker ensures the child graph is also connected.

Finally, for type 3 parthenogenesis, although two (adjacent) edges from the parent graph are missing
in the child graph, it is clear from the latter's construction that this can not result in a
disconnected descendant.\end{proof}

\subsection{Inverse breeding and inverse parthenogenic operations}

For some tuples $(\Gamma_D, e)$ where $\{e\} \in E_D$ is a
$1$-cracker, the inverse operation $\mathcal{B}^{-1}_1 (\Gamma_D ,
e) = ( \Gamma_1 , \Gamma_2, e_1, e_2)$ is well defined. In such a
case $ \{e \}$ is called an \emph{irreducible} $1$-cracker. If not
$ \{e \}$ will be called a \emph{reducible} $1$-cracker. Similarly
if the inverse operation $\mathcal{B}^{-1}_2 (\Gamma_D , e_3 ,
e_4) = ( \Gamma_1 , \Gamma_2, e_1, e_2)$ is well defined, where $
\{e_3 , e_4\} \in E_D$ is a $2$-cracker, the 2-cracker is called
an irreducible $2$-cracker and reducible $2$-cracker otherwise. We
will show later that the inverse operation $\mathcal{B}^{-1}_3
(\Gamma_D , e_1, e_2, e_3) = ( \Gamma_1 , \Gamma_2, v_1, v_2, a,
b, c, d, e, f)$ is always defined where $\{ e_1, e_2, e_3 \} \in E_D$
is a $3$-cracker. Therefore every 3-cracker is irreducible.
\begin{definition}
Whenever a cubic cracker is irreducible one of the equations
\eqref{eq_invb1}, \eqref{eq_invb2} and \eqref{eq_invb3} defines
the corresponding inverse breeding operation $\mathcal{B}^{-1}_1
(.)$ , $\mathcal{B}^{-1}_2 (.)$ or $\mathcal{B}^{-1}_3 (.)$. The
two cubic graphs $(\Gamma_1, \Gamma_2)$ from the tuple produced by
these operations are parents of $\Gamma_D$. In particular,
\begin{equation} \label{eq_invb1}
\mathcal{B}^{-1}_1 (\Gamma_D , e) = ( \Gamma_1 , \Gamma_2, e_1,
e_2)
\end{equation}
where $\Gamma_D$, $e$, $\Gamma_1$, $\Gamma_2$, $e_1$ and $e_2$
are defined in Definition \ref{def-breed1}. Similarly,
\begin{equation} \label{eq_invb2}
\mathcal{B}^{-1}_2 (\Gamma_D , e_3 , e_4) = ( \Gamma_1 , \Gamma_2,
e_1, e_2)
\end{equation}
where $\Gamma_D$, $e_3$, $e_4$, $\Gamma_1$, $\Gamma_2$, $e_1$
and $e_2$ are defined in Definition \ref{def-breed2}. Also,
\begin{equation} \label{eq_invb3}
\mathcal{B}^{-1}_3 (\Gamma_D , e_1, e_2, e_3) = ( \Gamma_1 ,
\Gamma_2, v_1, v_2, a, b, c, d, e, f)
\end{equation}
where $\Gamma_D$, $e_1$, $e_2$, $e_3$, $\Gamma_1$, $\Gamma_2$,
$v_1$, $v_2$, $a$, $b$, $c$, $d$, $e$ and $f$ are defined in
Definition \ref{def-breed3}.
\end{definition}
Similarly, inverse parthenogenic operations can be defined as
follows.
\begin{definition}
Equations \eqref{eq_invpath1}, \eqref{eq_invpath2} and
\eqref{eq_invpath3} define the corresponding inverse
parthenogenic operations $\mathcal{P}^{-1}_1 (.)$ ,
$\mathcal{P}^{-1}_2 (.)$ or $\mathcal{P}^{-1}_3 (.)$. The cubic
graph $(\Gamma_1)$ from the tuple produced by these operations is a
parent of $\Gamma_D$. In particular,
\begin{equation} \label{eq_invpath1}
\mathcal{P}^{-1}_1 (\Gamma_D , v_1 , v_4) = ( \Gamma_1 , e_1),
\end{equation}
where $\Gamma_D$, $v_1$, $v_4$, $\Gamma_1$ and $e_1$ are defined
in Definition \ref{def-parth1}. Similarly,

\begin{equation} \label{eq_invpath2}
\mathcal{P}^{-1}_2 (\Gamma_D , v_1 , v_2) = ( \Gamma_1, e_1, e_2),
\end{equation}
where $\Gamma_D$, $v_1$, $v_2$, $\Gamma_1$, $e_1$ and $e_2$ are
defined in Definition \ref{def-parth2}. Also,

\begin{equation} \label{eq_invpath3}
\mathcal{P}^{-1}_3 (\Gamma_D , a , v_1, v_2) = (\Gamma_1 , a),
\end{equation} Where $\Gamma_D$, $a$, $\Gamma_1$,
$\Gamma_2$, $v_1$ and $v_2$ are defined in Definition
\ref{def-parth3}.

Collectively, we refer to the three inverse breeding operations and
the three inverse parthenogenic operations as the six {\em inverse operations}.\end{definition}

It is important to note that the six breeding operations and the six inverse
operations presented here are not entirely new, and have been used in various forms
in other cubic graph generation routines. For example, type 2 parthenogenesis induces
an {\em H-subgraph} which is well-studied in literature (e.g. see Ore \cite{ore}).
Type 1 inverse parthenogenesis and type 1 inverse breeding appear as
Operation $O_2(K_4)$ and Operations $\mathcal{R}$ respectively in Ding and
Kanno \cite{dingkanno}. Types 1 and 2 parthenogenesis appear
in Brinkmann \cite{brinkmann}. All of the six breeding operations except type 3
breeding appear in some sense as generating rules in Batagelj \cite{batagelj},
specifically generating rules P1, P2, P3, P4 an P8. However, all of the above works
involve growing the complexity of a single graph by the evolution of subgraphs,
rather than combining several cubic graphs together. In addition, the operations in the
above works that are analogous to our parthenogenic operations are not confined to
the same conditions as ours (that is, they must occur on $1$-crackers and
$2$-crackers). Little consideration is given in the above works to procedures that are analogous to
our inverse operations. The benefits and potency of the particular set of breeding and
inverse operations that we have detailed in this section are demonstrated in
the following section.

\section{Results}\label{sec-results}

The following three propositions relate to the different possible
methods of creation of cubic crackers by the six breeding operations,
and are used in the proof of the main theorem for this section, Theorem
\ref{theorem-anygraph}.

\begin{proposition}
Any descendant involving a 1-cracker can be obtained from either
type 1 breeding, type 1 parthenogenesis, or type 3
parthenogenesis. \label{prop-1cracker}\end{proposition}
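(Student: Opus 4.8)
The plan is to exhibit, for the given descendant $\Gamma_D$ together with a fixed $1$-cracker $\{e\}$, a valid set of parents under one of the three operations named in the statement; equivalently, to show that one of the inverse operations $\mathcal{B}^{-1}_1$, $\mathcal{P}^{-1}_1$, $\mathcal{P}^{-1}_3$ is applicable at $\{e\}$. Write $e=(v_1,v_2)$. Since a $1$-cracker is a bridge, $\Gamma_D-e$ has exactly two components, $C_1\ni v_1$ and $C_2\ni v_2$. Because $\Gamma_D$ is a simple cubic graph, $v_1$ has precisely two further neighbours $p,q\in C_1$ with $p,q,v_2$ pairwise distinct, and likewise $v_2$ has two further neighbours $r,s\in C_2$. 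The only obstruction to ``un-subdividing'' the endpoint $v_1$ — deleting $v_1$ and inserting the edge $(p,q)$ — is that $(p,q)$ may already be an edge; since $p\neq q$ there is no further obstruction, and the result stays connected because $v_1$ has degree $2$ in $C_1$, so $C_1-v_1$ has at most two components, which $(p,q)$ reunites. The analogous statement holds at $v_2$.

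\emph{Case A: $(p,q)\notin E(\Gamma_D)$ and $(r,s)\notin E(\Gamma_D)$.} Let $\Gamma_1$ be $C_1$ with $v_1$ deleted and $(p,q)$ added, and $\Gamma_2$ be $C_2$ with $v_2$ deleted and $(r,s)$ added. I would check that $\Gamma_1,\Gamma_2$ are connected simple cubic graphs and, by tracking degrees and the edges deleted/added, that $\mathcal{B}_1(\Gamma_1,\Gamma_2,(p,q),(r,s))$ returns $\Gamma_D$ (with the bridge $e$); thus $\{e\}$ is an irreducible $1$-cracker and $\Gamma_D$ arises by type $1$ breeding.

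\emph{Case B: otherwise.} After possibly interchanging $v_1$ and $v_2$ we may assume $(p,q)\in E(\Gamma_D)$, so $\{v_1,p,q\}$ spans a triangle incident to the bridge. Let $p^\ast$ and $q^\ast$ be the third neighbours of $p$ and $q$; one checks $p^\ast,q^\ast\in C_1\setminus\{v_1,p,q\}$. If $p^\ast\neq q^\ast$ (Sub-case B1), this triangle is precisely a parthenogenic triangle hanging off the bridge: I would take $\Gamma_1:=\Gamma_D$ with $p,q$ deleted and the edges $(v_1,p^\ast),(v_1,q^\ast)$ added, verify that $\Gamma_1$ is a connected simple cubic graph still containing the bridge $e$ and meeting the hypotheses of Definition~\ref{def-parth3} with $a=v_1$, and conclude that $\mathcal{P}_3(\Gamma_1,v_1)$ reproduces $\Gamma_D$. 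If $p^\ast=q^\ast=:w$ (Sub-case B2), then, noting that $v_1\not\sim w$ because the only $C_1$-neighbours of $v_1$ are $p$ and $q$, the set $\{v_1,p,q,w\}$ induces $K_4$ minus the edge $(v_1,w)$, i.e.\ a parthenogenic diamond; moreover $w$ has a unique further neighbour $w^\ast\in C_1\setminus\{v_1,p,q,w\}$, and $(w,w^\ast)$ is a bridge of $\Gamma_D$ because $\{v_1,p,q,w\}$ meets the rest of $C_1$ only along that edge. I would then take $\Gamma_1:=\Gamma_D$ with $v_1,p,q,w$ deleted and the edge $(v_2,w^\ast)$ added, verify $\Gamma_1$ is a connected simple cubic graph with bridge $(v_2,w^\ast)$, and conclude that $\mathcal{P}_1\bigl(\Gamma_1,(v_2,w^\ast)\bigr)$ reproduces $\Gamma_D$ — here the roles of $a,b,v_1,v_2,v_3,v_4$ in Definition~\ref{def-parth1} are played by $v_2,w^\ast,v_1,p,q,w$.

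Cases A and B are exhaustive: for each endpoint of $e$ the two non-bridge neighbours are distinct, so that pair is either a non-edge or an edge, and if some endpoint yields an edge we are in Case B, while otherwise both yield non-edges and we are in Case A. I expect the bulk of the work to lie in this local case analysis — in particular, checking in Sub-cases B1 and B2 that the reconstructed parent is genuinely a simple, connected cubic graph that still carries a bridge, which amounts to ruling out parallel edges accidentally created by the deletions (these near-coincidences are exactly what distinguish the sub-cases) — together with the observation that simplicity of $\Gamma_D$ is what excludes a degenerate fourth possibility, a double edge at $v_1$, that would match none of the six operations.
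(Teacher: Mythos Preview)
Your proof is correct and follows essentially the same approach as the paper's own argument: the same case split on whether the two non-bridge neighbours of a bridge endpoint are adjacent, and within the adjacent case the same sub-split on whether their third neighbours coincide, leading respectively to type~1 breeding, type~3 parthenogenesis, and type~1 parthenogenesis. If anything you are slightly more careful than the paper in verifying that the reconstructed parents are simple, connected, and cubic.
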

\begin{proof} Consider a descendant $\Gamma_D =
\left<V_D,E_D\right>$ containing a 1-cracker comprising an edge
$(v_1,v_2)$. Since $\Gamma_D$ is cubic, $v_1$ and $v_2$ will both
be adjacent to two more vertices, say $\{a,b\}$ and $\{c,d\}$
respectively. Since the 1-cracker comprises edge $(v_1,v_2)$, we
know that $\{a,b\}$ and $\{c,d\}$ are disjoint sets. Then, we can
consider two cases.

{\bf Case 1:} The edges $(a,b)$ and $(c,d)$ are not present in
$\Gamma_D$. In this case, the 1-cracker is irreducible.
Suppose the bridge $(v_1,v_2)$ is removed from
$\Gamma_D$, separating the graph into two subgraphs, $\Gamma_{S1}
= \left<V_{S1},E_{S1}\right>$ and $\Gamma_{S2} =
\left<V_{S2},E_{S2}\right>$. Without loss of generality, we assume
that $v_1 \in V_{S1}$, and $v_2 \in V_{S2}$. Then, we define a
cubic graph $\Gamma_1 = \left<V_1,E_1\right>$, where $V_1 = V_{S1}
\backslash v_1$ and $E_1 = (E_{S1} \backslash \{(a,v_1),(b,v_1)\})
\;\cup\; (a,b)$. Similarly, we define a second cubic graph
$\Gamma_2 = \left<V_2,E_2\right>$, where $V_2 = V_{S2} \backslash
v_2$ and $E_2 = (E_{S2} \backslash \{(c,v_2),(d,v_2)\}) \;\cup\;
(c,d)$. Then, $\Gamma_D$ can be obtained from the type 1 breeding
operation $\mathcal{B}_1(\Gamma_1,\Gamma_2,(a,b),(c,d))$.

\begin{figure}[h]\begin{center}\hspace*{0cm}\epsfig{scale=0.35, figure=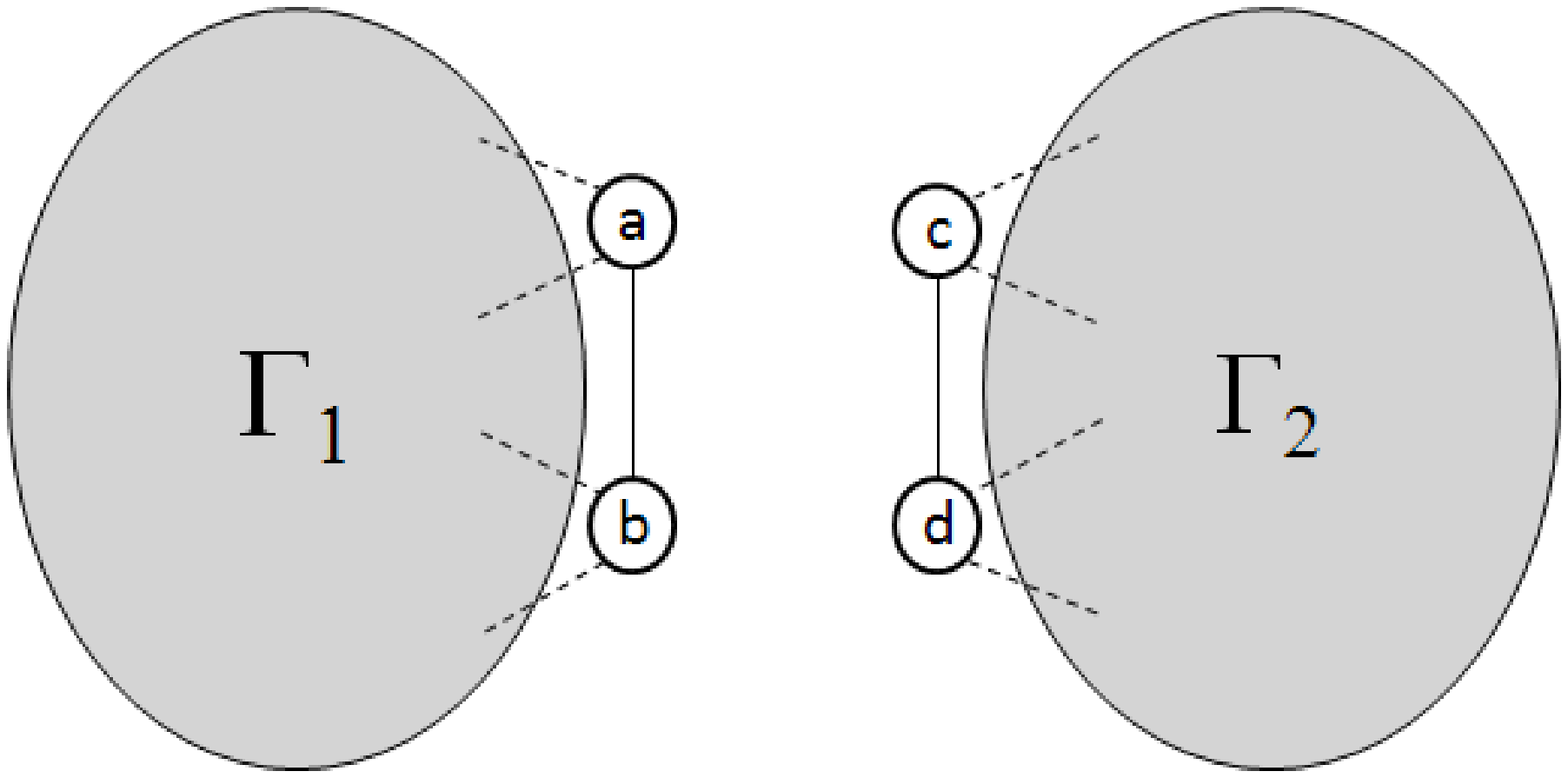}\hspace*{1.0cm}\epsfig{scale=0.35, figure=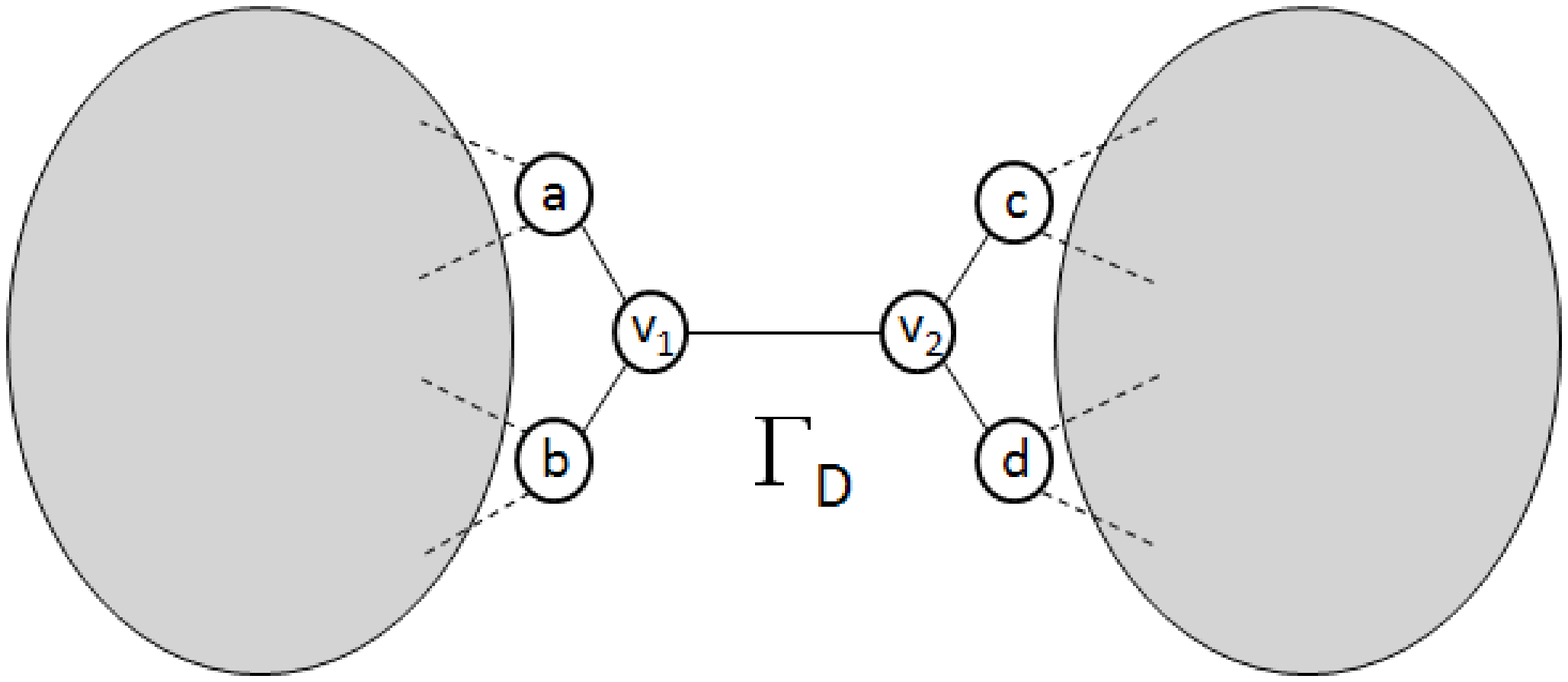}\\
\caption{Graphs $\Gamma_1$, $\Gamma_2$ and $\Gamma_D$ as described
in Case 1 of Proposition
\ref{prop-1cracker}}\label{fig-1_cracker_case_1}\end{center}\end{figure}

{\bf Case 2:} At least one of the edges $(a,b)$ or $(c,d)$ is present in
$\Gamma_D$. In this case, the 1-cracker cannot be
obtained from a type 1 breeding operation, as such an operation
would remove these edges. A 1-cracker of this type is, therefore,
reducible. If both edges are present, we can focus on either one.
Without loss of generality, we will assume that
edge $(a,b) \in E_D$. Then, since $\Gamma_D$ is cubic, and
vertices $a$ and $b$ are both adjacent to vertex $v_1$ and to each
other, they will also be adjacent to one more vertex each, say
vertices $e$ and $f$ respectively. Note that is is possible that
$e = f$, so we need to consider the cases separately.

{\bf Case 2.1:} $e = f$. Then, both edges $(a,e)$ and $(b,e)$ are
in $E_D$, as seen in the right panel of Figure \ref{fig-1_cracker_case_21}. Since
$\Gamma_D$ is cubic, vertex $e$ is adjacent to a third vertex, say
$g$. Clearly, edge $(e,g)$ is a bridge, since edge $(v_1,v_2)$ is
a bridge. Then, we define a cubic graph $\Gamma_1 =
\left<V_1,E_1\right>$, where $E_1 = (E_D \backslash
\{(v_1,v_2),(a,v_1),(b,v_1),(a,b),(a,e),(b,e),(e,g)\}) \;\cup\;
(g,v_2)$ and $V_1 = V_D \backslash \{v_1,a,b,e\}$. We can then
obtain $\Gamma_D$ from the type 1 parthenogenic operation
$\mathcal{P}_1(\Gamma_1,(g,v_2))$.

\begin{figure}[h]\begin{center}\hspace*{0cm}\epsfig{scale=0.35, figure=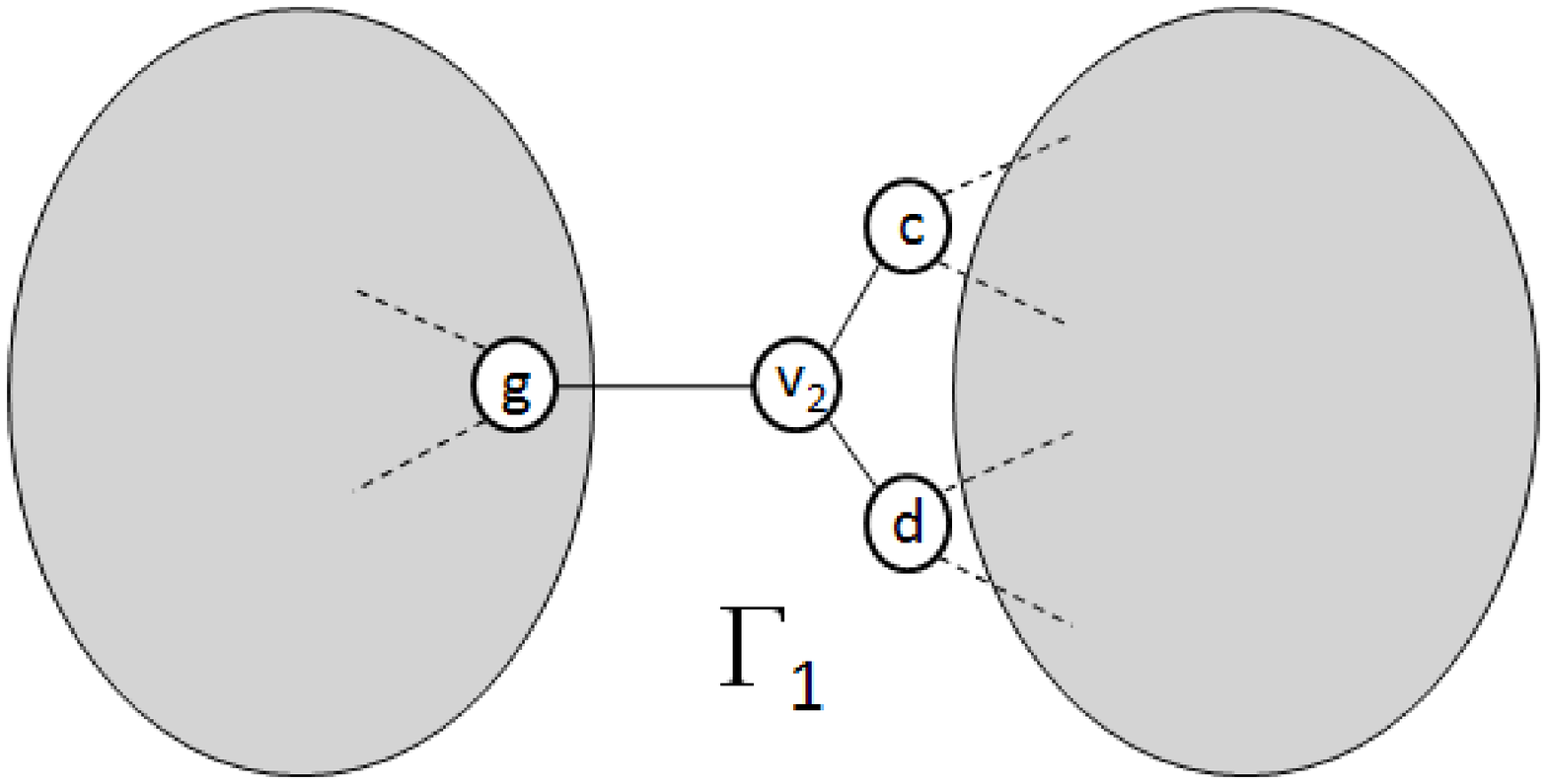}\hspace*{1.0cm}\epsfig{scale=0.35, figure=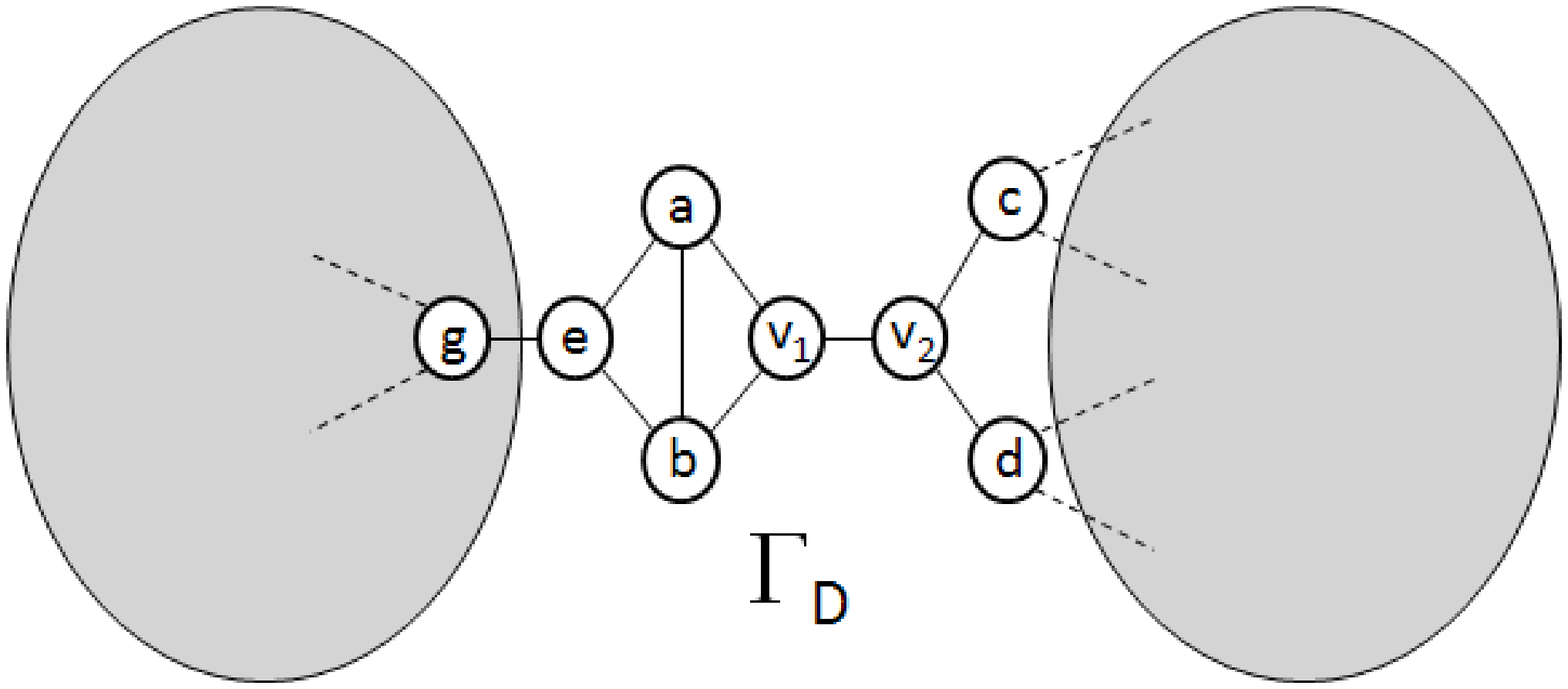}\\
\caption{Graphs $\Gamma_1$ and $\Gamma_D$ as described in Case 2.1
of Proposition
\ref{prop-1cracker}}\label{fig-1_cracker_case_21}\end{center}\end{figure}

{\bf Case 2.2:} $e \neq f$. Then, as illustrated in Figure
\ref{fig-1_cracker_case_22}, we define a cubic graph $\Gamma_1 =
\left<V_1,E_1\right>$, where $E_1 = (E_D \backslash \{(a,v_1),
(b,v_1),(a,b),(a,e),(b,f)\}) \;\cup\; \{(e,v_1),(f,v_1)\}$ and
$V_1 = V_D \backslash \{a,b\}$. Then, we can obtain $\Gamma_D$
from the type 3 parthenogenic operation
$\mathcal{P}_3(\Gamma_1,v_1)$.
\end{proof}
\begin{figure}[h]\begin{center}\hspace*{0cm}\epsfig{scale=0.35, figure=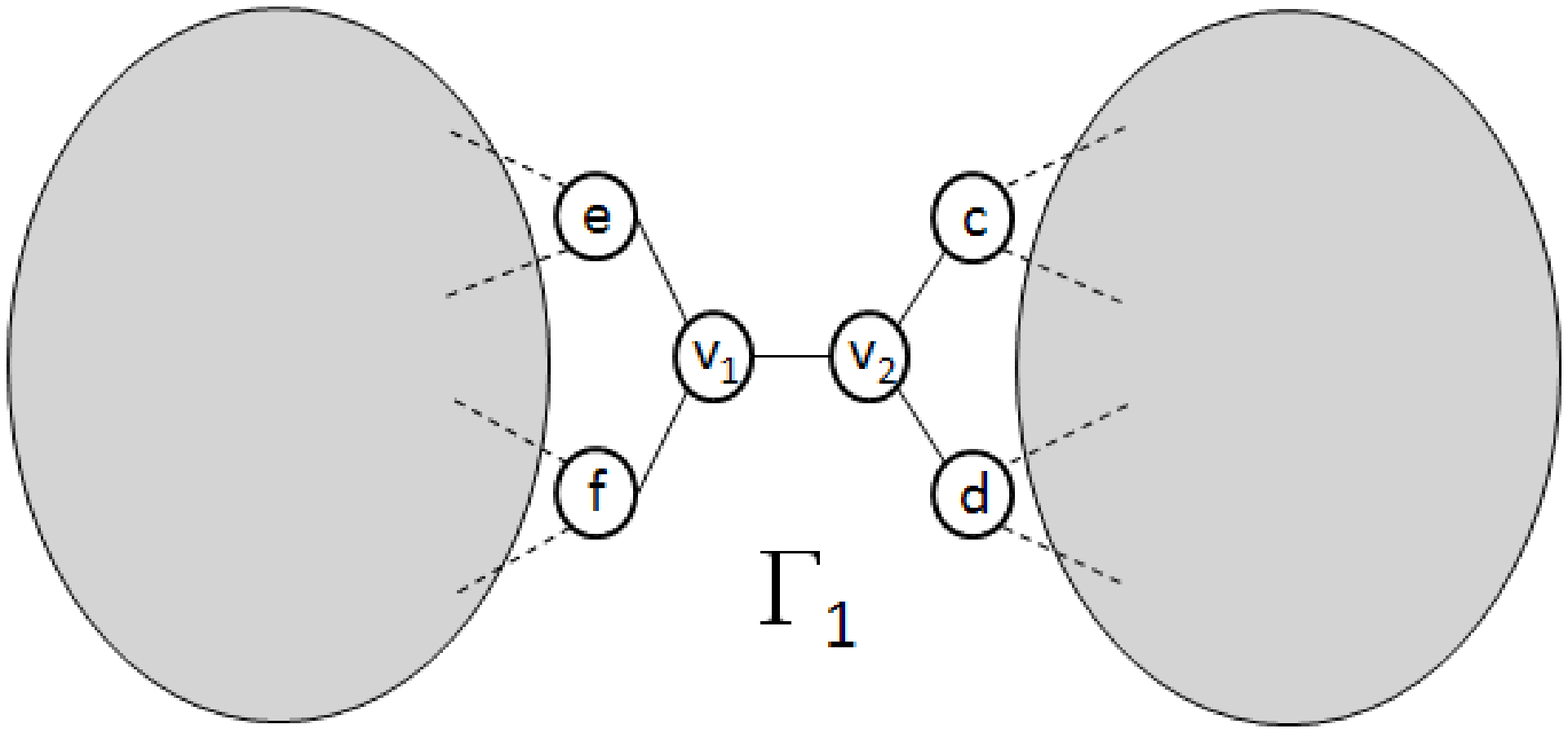}\hspace*{0.8cm}\epsfig{scale=0.35, figure=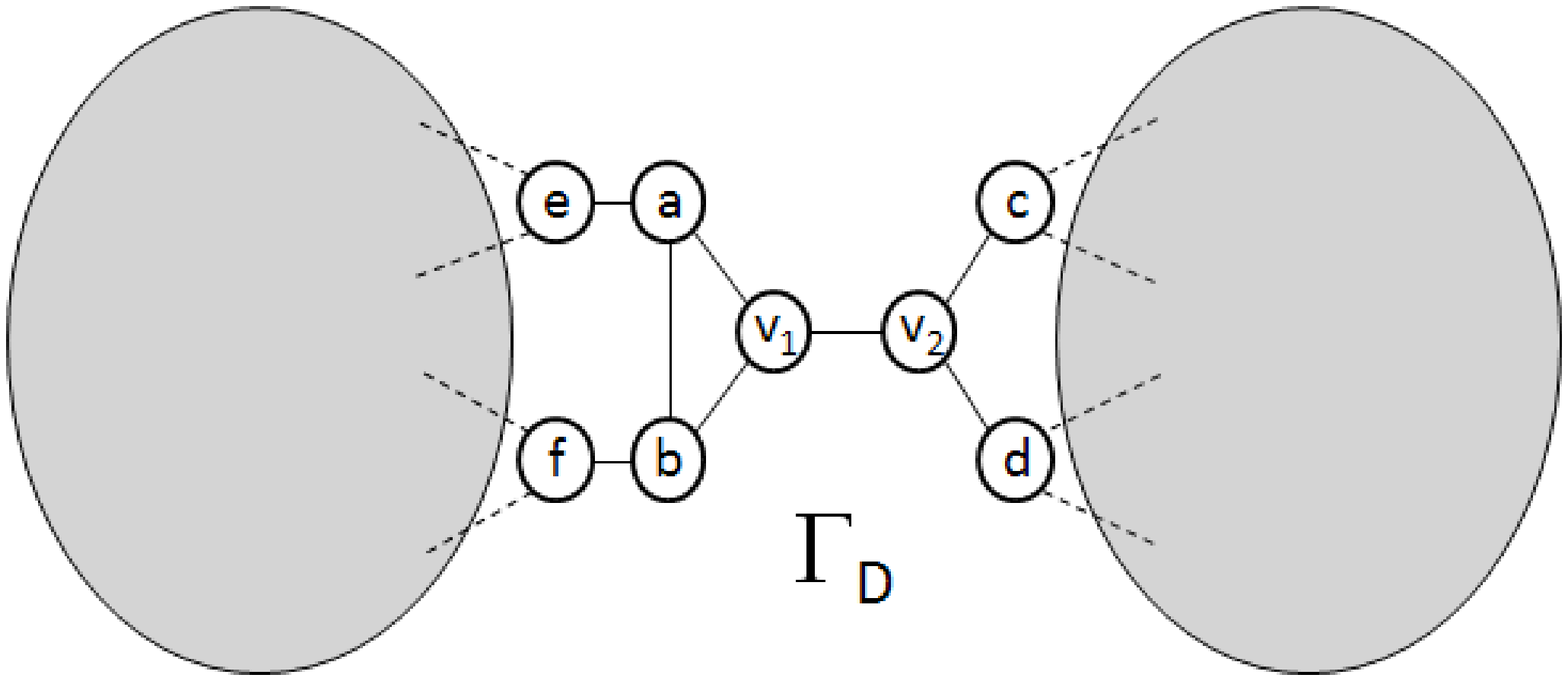}\\
\caption{Graphs $\Gamma_1$ and $\Gamma_D$ as described in Case 2.2
of Proposition
\ref{prop-1cracker}}\label{fig-1_cracker_case_22}\end{center}\end{figure}

\begin{proposition}Any descendant involving a $2$-cracker can be obtained from either type 1 breeding, type 2 breeding, type 1 parthenogenesis, type 2 parthenogenesis, or type 3 parthenogenesis.\label{prop-2cracker}\end{proposition}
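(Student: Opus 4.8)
The plan is to mimic the structure of the proof of Proposition~\ref{prop-1cracker}, doing a careful case analysis driven by which extra edges among the ``near neighbours'' of the $2$-cracker are already present in $\Gamma_D$. So, start with a descendant $\Gamma_D = \left<V_D,E_D\right>$ containing a $2$-cracker $\{(a,c),(b,d)\}$, where (by the definition of a cracker) $a,b,c,d$ are four distinct vertices and, by the first Lemma, removing the cracker splits $\Gamma_D$ into exactly two components $\Gamma_{S1}$ (containing $a$ and $b$) and $\Gamma_{S2}$ (containing $c$ and $d$). Since $\Gamma_D$ is cubic, each of $a,b$ has exactly one further neighbour inside $\Gamma_{S1}$, say $a'$ and $b'$, and similarly $c,d$ have one further neighbour each inside $\Gamma_{S2}$, say $c'$ and $d'$. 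The whole argument then turns on the status of the candidate ``closing'' edges on the $\Gamma_{S1}$ side (whether $(a,b)$ is an edge, or whether $a$ and $b$ share a common neighbour, etc.) and symmetrically on the $\Gamma_{S2}$ side.

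First I would dispose of the generic case: if $(a,b) \notin E_D$ and $(c,d) \notin E_D$, then the $2$-cracker is irreducible, and I would explicitly construct $\Gamma_1$ from $\Gamma_{S1}$ by deleting the half-edges of the cracker and adding the edge $(a,b)$, and $\Gamma_2$ from $\Gamma_{S2}$ by adding $(c,d)$; one checks both are cubic, and $\mathcal{B}_2(\Gamma_1,\Gamma_2,(a,b),(c,d)) = \Gamma_D$, using the fact that neither added edge is a $1$-cracker (since each lies on the cycle that the cracker separated out, by the cyclic-edge-cut observation in the text). The type~1 breeding possibility enters exactly when a removed edge would have been a $1$-cracker --- i.e. when $\Gamma_{S1}$ minus $v$'s half-edges, plus $(a,b)$, fails to be $2$-edge-connected --- and there the type~1 operation with its extra bridge vertex is used instead, exactly paralleling Case~1 of the previous proposition.

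Next I would handle the reducible cases, where at least one closing edge is already present. Suppose $(a,b) \in E_D$. Because $\Gamma_D$ is cubic and $a,b$ are each incident to one leg of the cracker and to each other, each has exactly one remaining neighbour, $a'$ and $b'$ say. If $a' \neq b'$, I would delete $a$ and $b$, reconnect $a'$ and $b'$ to a new structure, and realize $\Gamma_D$ as the image of a type~3 parthenogenic operation (this recreates the $2$-cracker by inserting a parthenogenic triangle/diamond-type structure next to it) --- essentially the analogue of Case~2.2. If $a' = b'$, so $a$ and $b$ have a common third neighbour $e$, then $\{a,b,e\}$ together with $a,b$'s cracker-legs forms exactly the parthenogenic bridge configuration of Definition~\ref{def-parth2}, or else the small structure that type~1 parthenogenesis inserts, depending on the further neighbour of $e$; here I would excise that $\{a,b,e\}$ gadget, reconnect the cracker leg of $a$ directly to the cracker leg of $b$ (or to $e$'s third neighbour), and exhibit $\Gamma_D$ as the image of $\mathcal{P}_2$ (the H-subgraph case) or $\mathcal{P}_1$. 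The symmetric situation with $(c,d) \in E_D$ is identical, and if both are present one simply picks one side to reduce on, as in the previous proof. In every subcase I would include the routine verification that the constructed parent is cubic and connected and that applying the named breeding operation returns $\Gamma_D$ (up to isomorphism).

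The main obstacle I expect is bookkeeping the degenerate coincidences: the vertices $a,b,c,d$ and their secondary neighbours $a',b',c',d'$ need not all be distinct, and across the two sides one must be sure no case is missed and no two cases overlap in a way that would break the clean ``which operation produced it'' classification. In particular the case $a'=b'$ has a further split according to whether $e$'s third neighbour lies ``deep'' in $\Gamma_{S1}$ or is itself part of another short structure, and one must check that the extracted gadget matches exactly one of the parthenogenic diamond/bridge/triangle templates so that the inverse operation is genuinely well-defined; getting this exhaustive-yet-disjoint case split right (and confirming that the small-graph boundary cases, e.g. when a side has too few vertices, are vacuous because $\Gamma_D$ is a descendant on $N \geq 8$ vertices) is where the real care is needed. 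The figures accompanying Proposition~\ref{prop-2cracker} would carry most of the intuitive load, as they do for the $1$-cracker proposition.
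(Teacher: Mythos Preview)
Your overall plan---case split on whether the ``closing'' edges $(a,b)$ or $(c,d)$ are already present, then in the reducible case split further on whether the two extra neighbours coincide---is exactly the paper's structure. But you have the parthenogenic operations attached to the wrong subcases, and this is not just a labelling slip: the preconditions of the operations are different, so the constructions you describe would not actually be instances of the operations you name.

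Concretely, when $(a,b)\in E_D$ and the third neighbours $a',b'$ of $a,b$ are \emph{distinct}, the two-vertex gadget $\{a,b\}$ with the edge $(a,b)$ between them is precisely the \emph{parthenogenic bridge} of Definition~\ref{def-parth2}, sitting between the $2$-cracker $\{(a,c),(b,d)\}$ and the (candidate) $2$-cracker $\{(a,a'),(b,b')\}$; the correct realisation is therefore $\mathcal{P}_2$, not $\mathcal{P}_3$. (One must also allow for the possibility that $(a,a')$ and $(b,b')$ are each $1$-crackers rather than jointly a $2$-cracker; that is where type~1 breeding and the other $\mathcal{P}_i$ enter, via Proposition~\ref{prop-1cracker}.) Conversely, when $a'=b'=:e$, the three vertices $a,b,e$ form a triangle and the edge from $e$ to its remaining neighbour is a bridge in $\Gamma_D$; this is the \emph{parthenogenic triangle} of Definition~\ref{def-parth3}, and the correct realisation is $\mathcal{P}_3$, not $\mathcal{P}_2$ or $\mathcal{P}_1$.

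A second, smaller issue: in your irreducible case you worry that the added edge $(a,b)$ in $\Gamma_1$ might be a $1$-cracker, and propose falling back to type~1 breeding there. This cannot happen: if $(a,b)$ were a bridge in $\Gamma_1$ then $a$ and $b$ would lie in different components of $\Gamma_{S1}$, and removing the single edge $(a,c)$ from $\Gamma_D$ would already disconnect it, contradicting minimality of the $2$-cracker. So $\mathcal{B}_2$ applies cleanly in Case~1, and type~1 breeding enters only through the $1$-cracker subcase of Case~2.
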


\begin{proof} Consider a descendant $\Gamma_D = \left<V_D,E_D\right>$
containing a $2$-cracker comprising edges $(v_1,v_2)$ and
$(v_3,v_4)$. We consider two cases.

{\bf Case 1:} Neither edge $(v_1,v_3)$ nor $(v_2,v_4)$ are not
present in $\Gamma_D$, as illustrated in Figure
\ref{fig-2_cracker_case_1}. In this case, the 2-cracker is
irreducible. Suppose the edges $(v_1,v_2)$
and $(v_3,v_4)$ are removed from $\Gamma_D$, separating the graph
into two subgraphs $\Gamma_{S1} = \left<V_{S1},E_{S1}\right>$ and
$\Gamma_{S2} = \left<V_{S2},E_{S2}\right>$. Without loss of
generality, we assume that $v_1, v_3 \in V_{S1}$ and $v_2, v_4 \in V_{S2}$.
Then, we define a cubic
graph $\Gamma_1 = \left<V_1,E_1\right>$, where $V_1 = V_{S1}$ and
$E_1 = E_{S1} \;\cup\; (v_1,v_3)$. Similarly, we define a second
cubic graph $\Gamma_2 = \left<V_2,E_2\right>$, where $V_2 =
V_{S2}$ and $E_2 = E_{S2} \;\cup\; (v_2,v_4)$. Then, $\Gamma_D$
is obtained from the type 2 breeding operation
$\mathcal{B}_2(\Gamma_1,\Gamma_2,(v_1,v_3),(v_2,v_4))$.

\begin{figure}[h]\begin{center}\hspace*{0cm}\epsfig{scale=0.35, figure=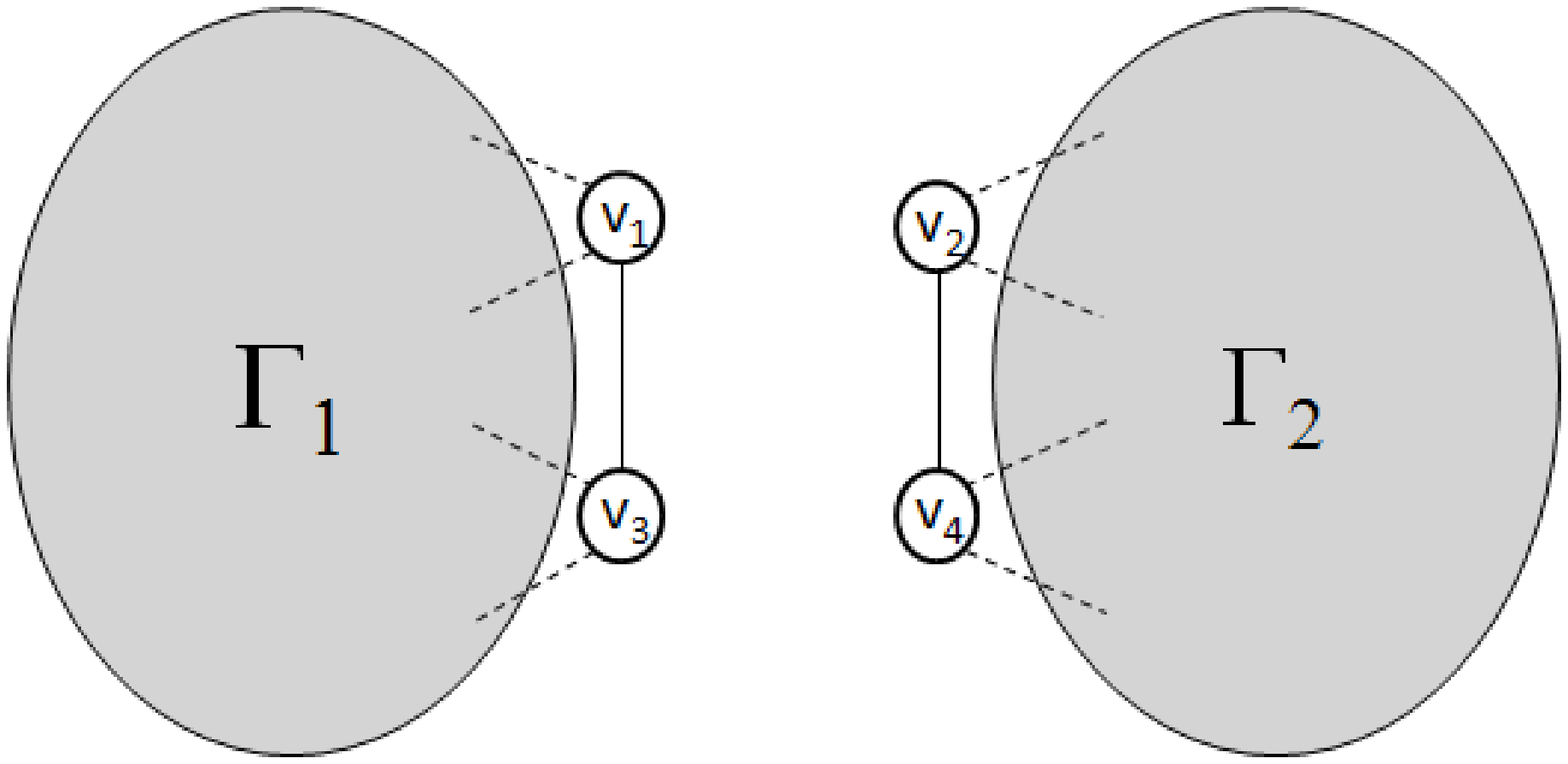}\hspace*{1.0cm}\epsfig{scale=0.35, figure=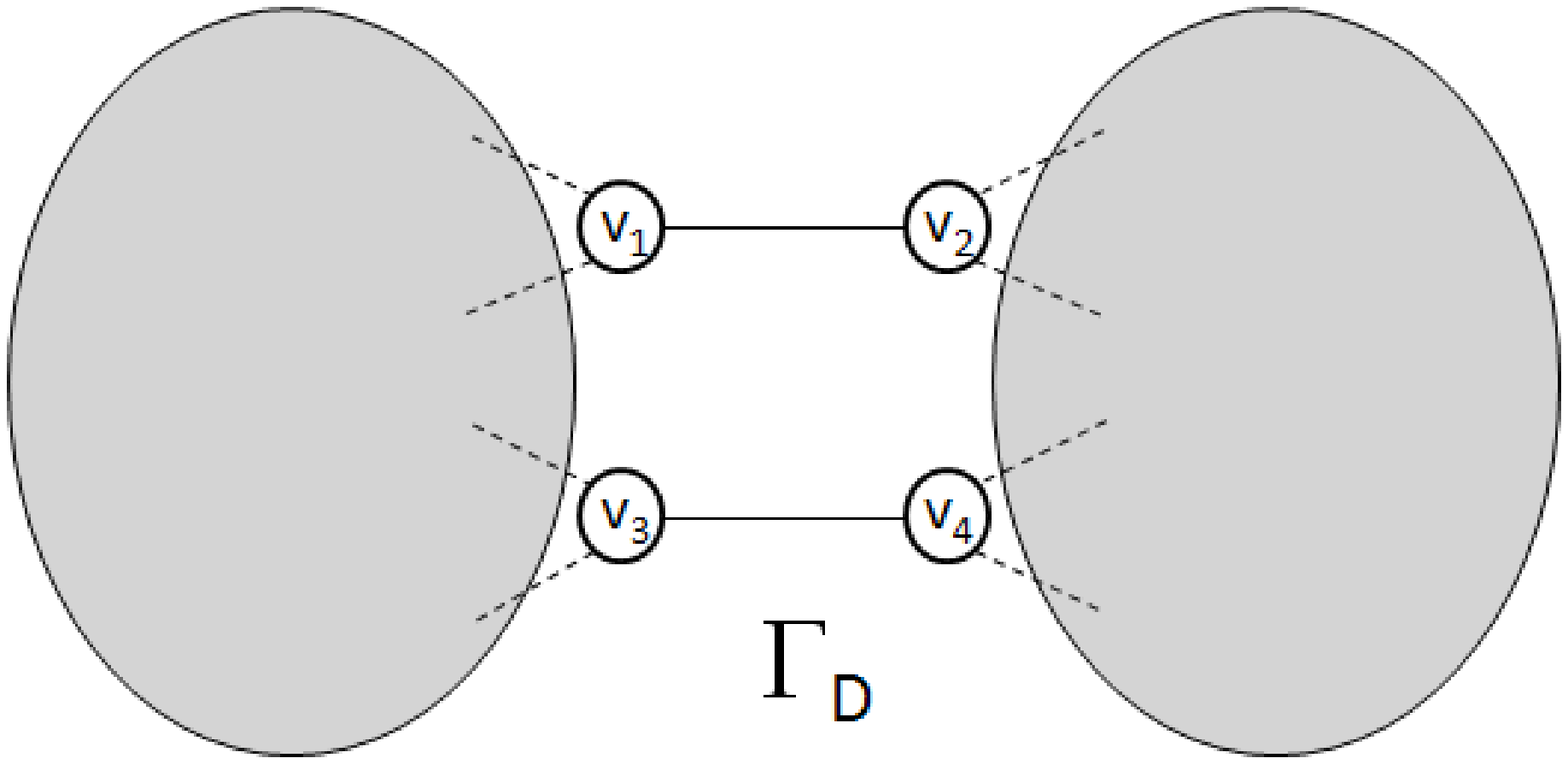}\\
\caption{Graphs $\Gamma_1$, $\Gamma_2$ and $\Gamma_D$ as described
in Case 1 of Proposition
\ref{prop-2cracker}}\label{fig-2_cracker_case_1}\end{center}\end{figure}

{\bf Case 2:} At least one of the edges $(v_1,v_3)$ or $(v_2,v_4)$ is
present in $\Gamma_D$. In this case, the 2-cracker cannot be
obtained from a type 2 breeding operation, as such an operation
would remove these edges. A 2-cracker of this type is therefore
reducible. If both edges are present, we can focus on either
one. Without loss of generality, we will assume that edge
$(v_1,v_3) \in E_D$. Then, since $\Gamma_D$ is cubic, and vertex
$v_1$ is adjacent to vertices $v_2$ and $v_3$, it will be adjacent
to one more vertex, say vertex $a$. Similarly, since vertex $v_3$
is adjacent to vertices $v_1$ and $v_4$, it will be adjacent to
one more vertex, say vertex $b$. Note that it is possible that $a
= b$, so we need to consider the cases separately.

{\bf Case 2.1:} $a = b$. In this case, edge $(v_1,a) \in E_D$ and edge
$(v_3,a) \in E_D$, as illustrated in Figure
\ref{fig-2_cracker_case_21}. Since $\Gamma_D$ is cubic, vertex $a$
is adjacent to a third vertex, say $c$. Clearly, edge $(a,c)$ is a
bridge. Then, we define a cubic graph $\Gamma_1 =
\left<V_1,E_1\right>$, where $V_1 = V_D \backslash \{v_1,v_3\}$
and $E_1 = E_D \backslash
\{(v_1,v_2),(v_3,v_4),(v_1,v_3),(a,v_1),(a,v_3)\} \;\cup\;
\{(a,v_2),(a,v_4)\}$. Then, we can obtain $\Gamma_D$ from the type
3 parthenogenic operation $\mathcal{P}_3(\Gamma_1,a)$. Note that
this case is essentially the same as Case 2.2 in Proposition
\ref{prop-1cracker}.

\begin{figure}[h]\begin{center}\hspace*{0cm}\epsfig{scale=0.35, figure=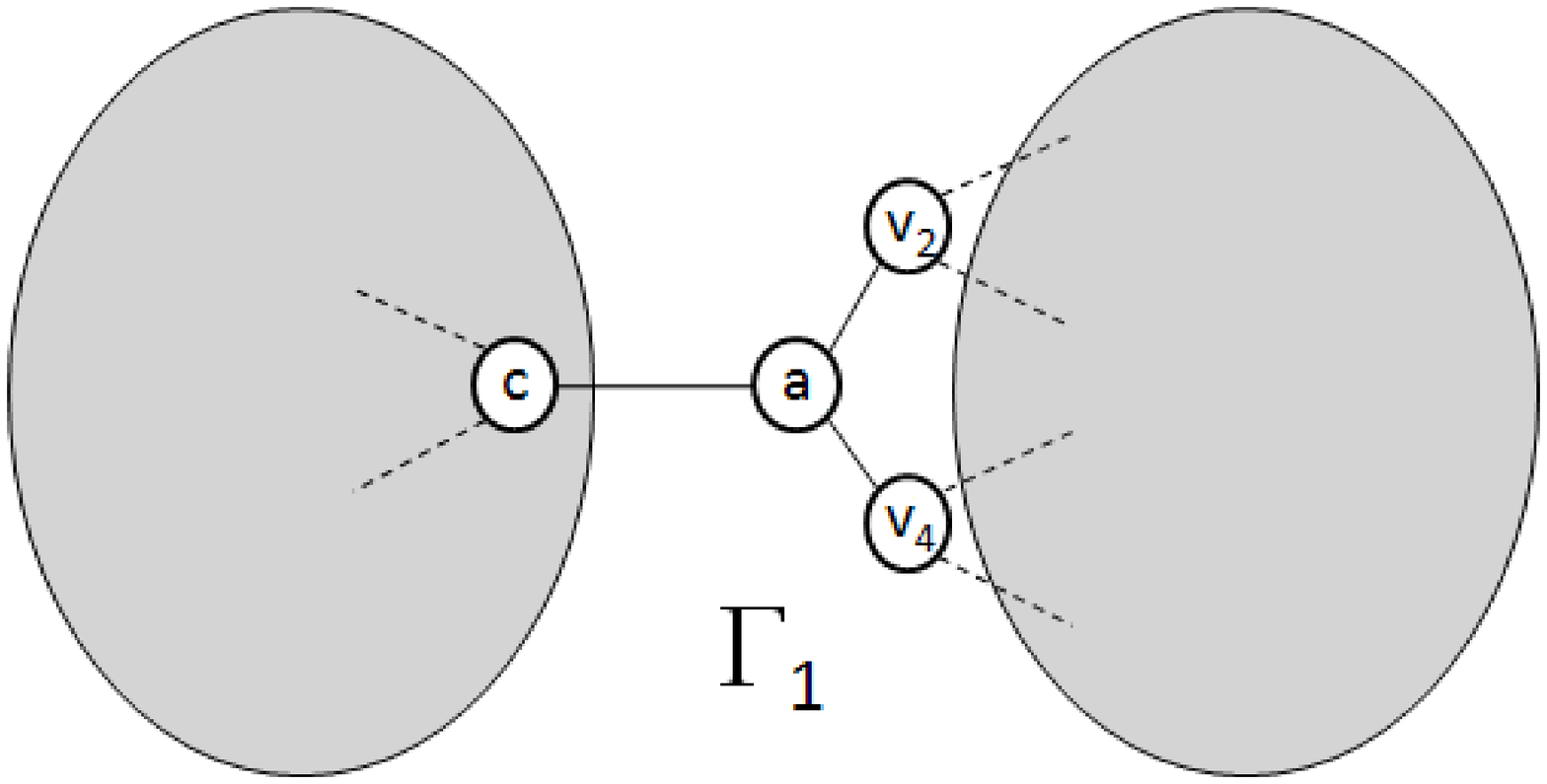}\hspace*{1.0cm}\epsfig{scale=0.35, figure=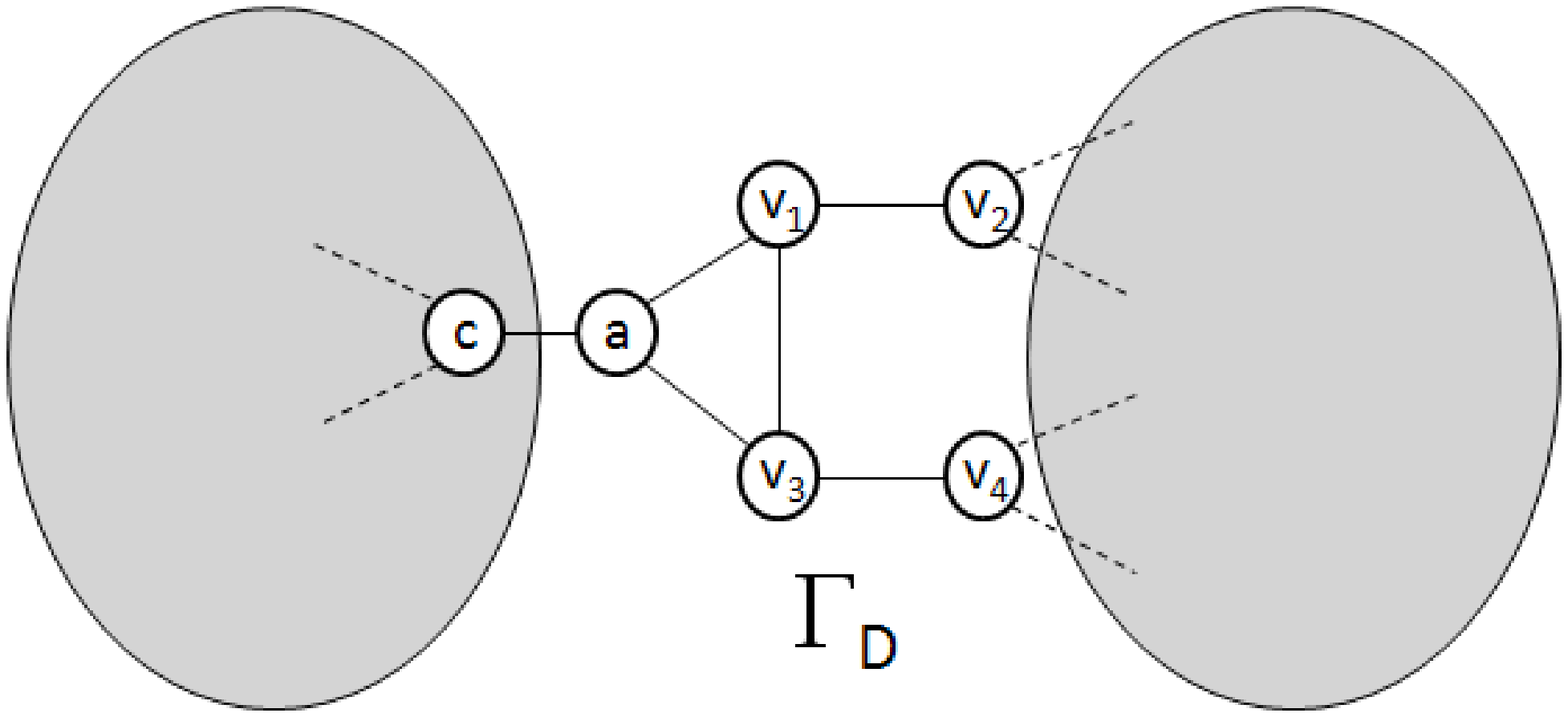}\\
\caption{Graphs $\Gamma_1$ and $\Gamma_D$ as described in Case 2.1
of Proposition
\ref{prop-2cracker}}\label{fig-2_cracker_case_21}\end{center}\end{figure}

{\bf Case 2.2:} $a \neq b$. It is obvious that the non-adjacent edges $(v_1,a)$ and $(v_3,b)$ form a cutset. This implies that either both edges are $1$-crackers, or together they form a $2$-cracker. The former situation is covered in Proposition \ref{prop:graphs_With_1_cracker} and $\Gamma_D$ can be obtained from either
type 1 breeding, type 1 parthenogenesis, or type 3
parthenogenesis. For the latter situation, let us define a cubic graph
$\Gamma_1 = \left<V_1,E_1\right>$, where $V_1 = V_D \backslash
\{v_1,v_3\}$ and $E_1 = E_D \backslash
\{(v_1,v_2),(v_3,v_4),(v_1,v_3),(a,v_1),(b,v_3)\} \;\cup\;
\{(a,v_2),(b,v_4)\}$, as illustrated in Figure
\ref{fig-2_cracker_case_22}. We can then obtain $\Gamma_D$ from the type
2 parthenogenic operation
$\mathcal{P}_2(\Gamma_1,(a,v_2),(b,v_4))$. \end{proof}
\begin{figure}[h]\begin{center}\hspace*{0cm}\epsfig{scale=0.35, figure=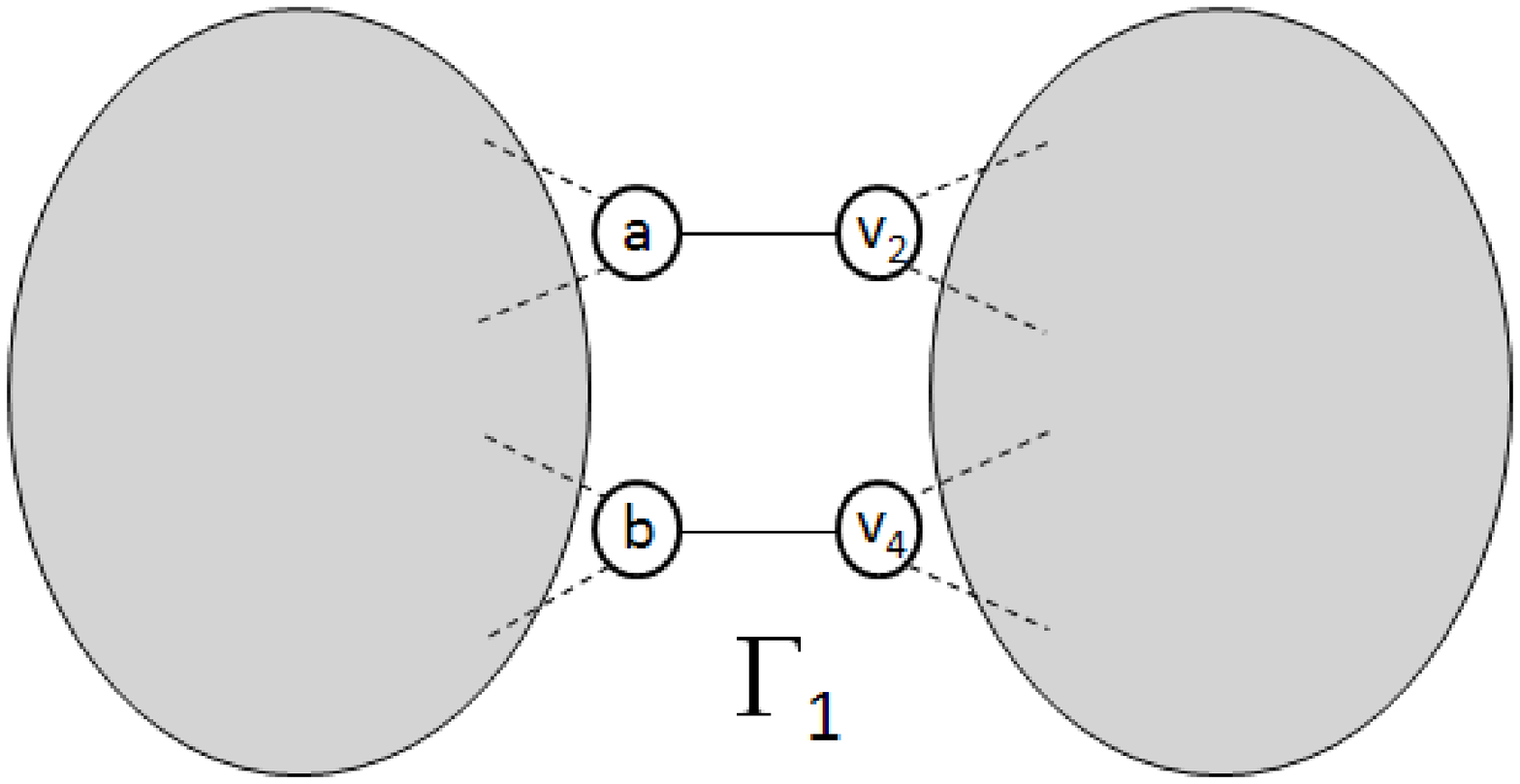}\hspace*{1.0cm}\epsfig{scale=0.35, figure=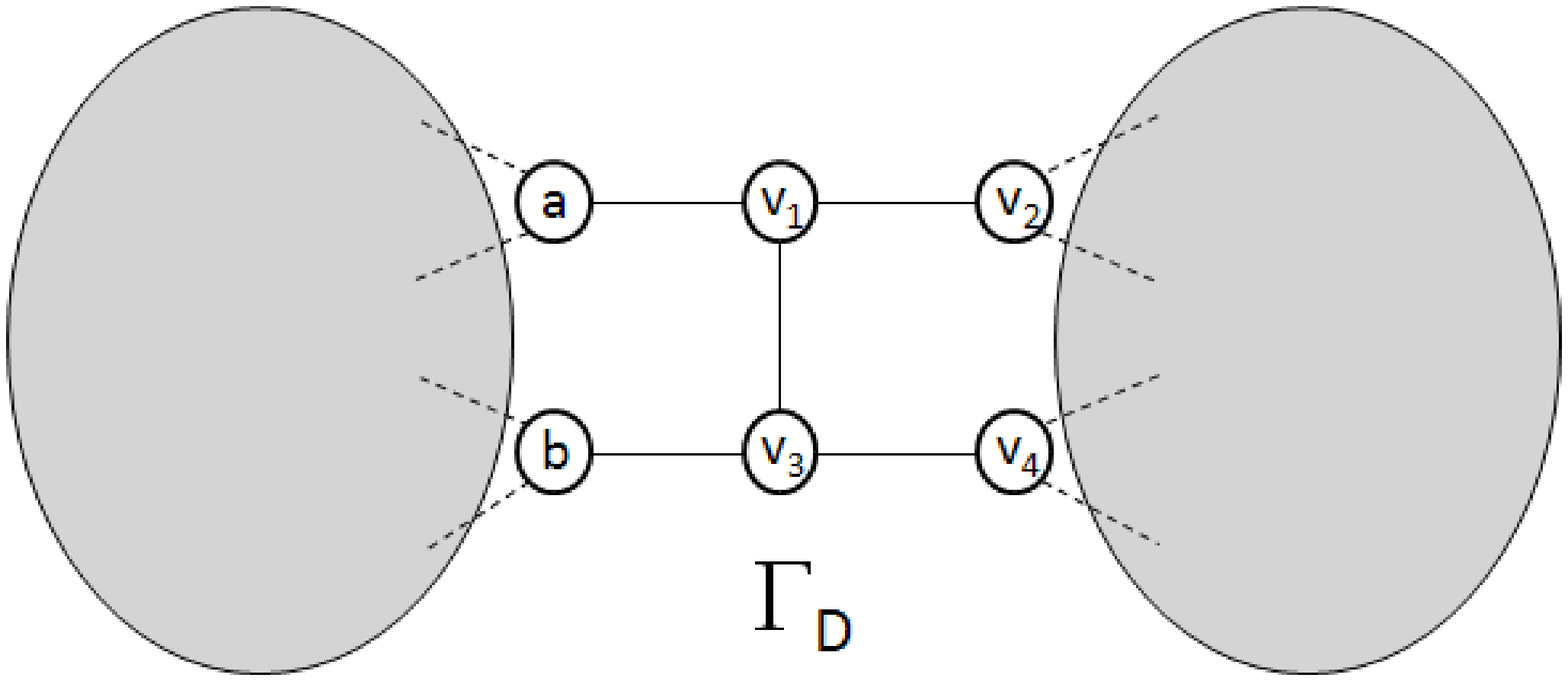}\\
\caption{Graphs $\Gamma_1$ and $\Gamma_D$ as described in Case 2.2
of Proposition
\ref{prop-2cracker}}\label{fig-2_cracker_case_22}\end{center}\end{figure}

\begin{proposition}Any descendant involving a $3$-cracker can be obtained from type 3
breeding.\label{prop-3cracker}\end{proposition}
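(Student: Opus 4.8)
The plan is to exploit the fact, already noted in the text, that every $3$-cracker is irreducible: unlike the $1$- and $2$-cracker cases there is no case split on adjacency of ``crossing'' edges, so it suffices to directly invert a single type $3$ breeding operation.

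First I would fix notation. Let $\Gamma_D = \left<V_D,E_D\right>$ be a descendant containing a $3$-cracker $\{e_1,e_2,e_3\}$. By the lemma on cracker removal, deleting these three edges splits $\Gamma_D$ into exactly two connected components $\Gamma_{S1} = \left<V_{S1},E_{S1}\right>$ and $\Gamma_{S2} = \left<V_{S2},E_{S2}\right>$. I would then observe that each $e_i$ has exactly one endpoint in each component: if some $e_i$ had both endpoints in the same component, the other two edges would already disconnect $\Gamma_D$, contradicting the minimality clause of Definition \ref{k-cracker}. Hence, after relabelling if necessary, I may write $e_1 = (a,d)$, $e_2 = (b,e)$, $e_3 = (c,f)$ with $a,b,c \in V_{S1}$ and $d,e,f \in V_{S2}$; moreover $a,b,c$ are pairwise distinct and $d,e,f$ are pairwise distinct, since two cracker edges sharing a vertex would be adjacent, again violating Definition \ref{k-cracker}.

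Next I would build the parents. Each of $a,b,c$ has degree $2$ in $\Gamma_{S1}$ (its third incident edge in $\Gamma_D$ lies in the cracker), and likewise each of $d,e,f$ has degree $2$ in $\Gamma_{S2}$. Define $\Gamma_1$ by adjoining a new vertex $v_1$ to $\Gamma_{S1}$ together with the edges $(a,v_1),(b,v_1),(c,v_1)$, and define $\Gamma_2$ analogously by adjoining $v_2$ to $\Gamma_{S2}$ with edges $(d,v_2),(e,v_2),(f,v_2)$. Both graphs are cubic, since the three degree-$2$ vertices are raised to degree $3$ while $v_1,v_2$ themselves have degree $3$, and both are connected because one is attaching a single vertex to an already connected graph. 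I would also check the precondition of Definition \ref{def-breed3} that none of the edges incident to $v_1$ or $v_2$ is a $1$-cracker: deleting, say, $(a,v_1)$ from $\Gamma_1$ still leaves $v_1$ joined to $b$ and $b$ joined to $a$ by a path inside the connected graph $\Gamma_{S1}$, so $(a,v_1)$ is not a bridge, and symmetrically for the remaining five edges.

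Finally I would just match the bookkeeping of Definition \ref{def-breed3}: applying $\mathcal{B}_3(\Gamma_1,\Gamma_2,v_1,v_2,a,b,c,d,e,f)$ produces a pair $(\Gamma_D',e_1,e_2,e_3)$ whose vertex set is $(V_1 \setminus v_1) \cup (V_2 \setminus v_2) = V_{S1} \cup V_{S2} = V_D$ and whose edge set is $E_{S1} \cup E_{S2} \cup \{(a,d),(b,e),(c,f)\} = E_D$, so $\Gamma_D' = \Gamma_D$. The only steps requiring any care are the ``one endpoint per component'' observation and the ``no $1$-cracker'' verification; there is no real obstacle here, which is precisely the reason every $3$-cracker is irreducible.
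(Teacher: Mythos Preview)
Your proof is correct and follows essentially the same construction as the paper: split on the $3$-cracker, cap each component with a new degree-$3$ vertex, and read off the type~3 breeding. The only difference is that you supply several verifications the paper leaves implicit---that each cracker edge straddles the two components, that the six endpoints are distinct, that the capped graphs are cubic and connected, and that the new edges at $v_1,v_2$ are not $1$-crackers---so your argument is in fact more complete than the original.
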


\begin{proof} Consider a descendant $\Gamma_D = \left<V_D,E_D\right>$
containing a $3$-cracker comprising edges $(v_1,v_2)$, $(v_3,v_4)$
and $(v_5,v_6)$, as illustrated in Figure
\ref{fig-3_cracker_case_1}. Then, suppose the edges $(v_1,v_2)$,
$(v_3,v_4)$ and $(v_5,v_6)$ are removed from $\Gamma_D$,
separating the graph into two subgraphs $\Gamma_{S1} =
\left<V_{S1},E_{S1}\right>$ and $\Gamma_{S2} =
\left<V_{S2},E_{S2}\right>$. Without loss of generality, we assume
that $\{v_1, v_3, v_5 \}\in V_{S1}$ and $\{v_2, v_4, v_6 \} \in
V_{S1}$. Then, we introduce a new vertex $v_7$, and define a cubic
graph $\Gamma_1 = \left<V_1,E_1\right>$, where $V_1 = V_{S1}
\;\cup\; \{v_7\}$ and $E_1 = E_{S1} \;\cup\;
\{(v_1,v_7),(v_3,v_7),(v_5,v_7)\}$. Similarly, we introduce a new
vertex $v_8$, and define a second cubic graph $\Gamma_2 =
\left<V_2,E_2\right>$, where $V_2 = V_{S2} \;\cup\; \{v_8\}$ and
$E_2 = E_{S2} \;\cup\; \{(v_2,v_8),(v_4,v_8),(v_6,v_8)\}$. Then,
we can obtain $\Gamma_D$ from the type 3 breeding operation
$\mathcal{B}_3(\Gamma_1,\Gamma_2,v_7,v_8,v_1,v_3,v_5,v_2,v_4,v_6)$.
\end{proof}
\begin{figure}[h]\begin{center}\hspace*{0cm}\epsfig{scale=0.35, figure=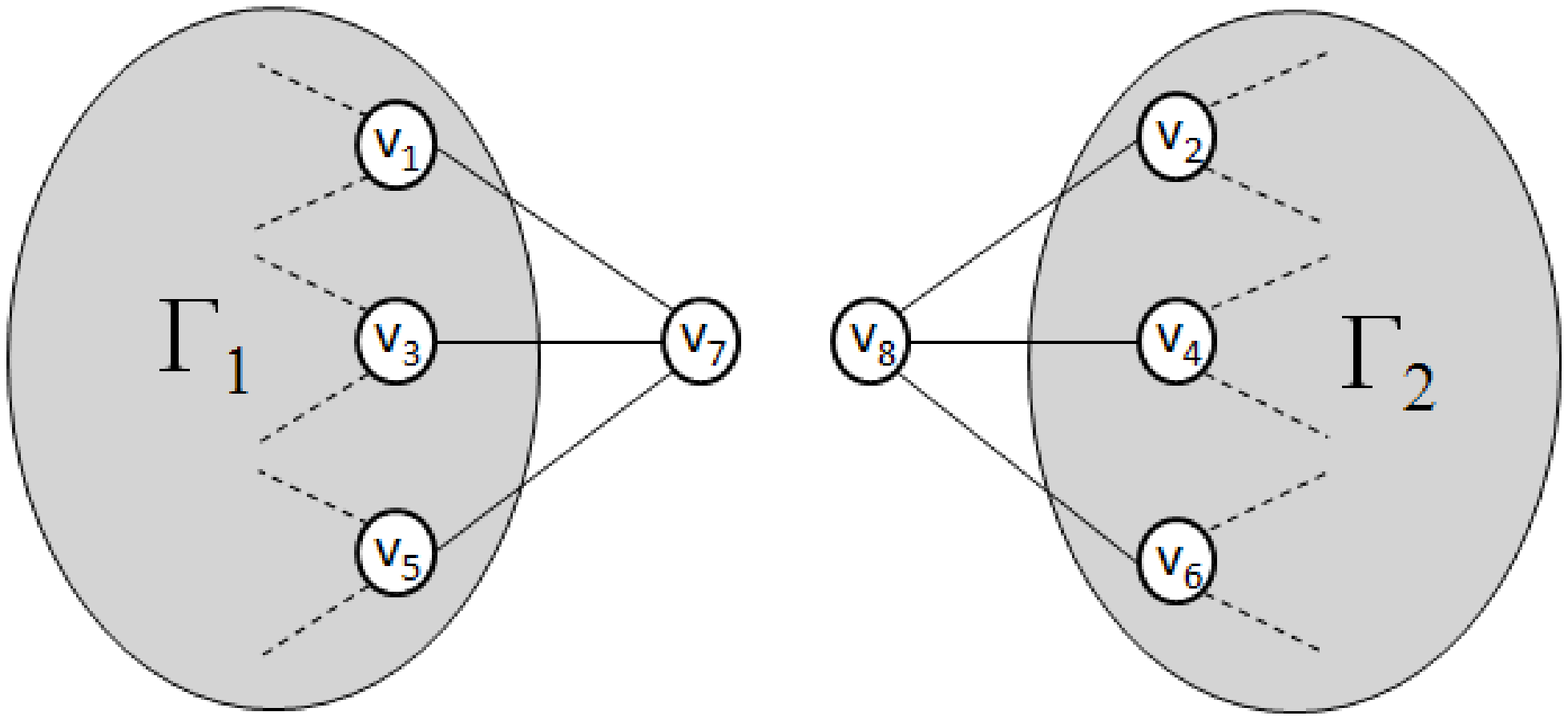}\hspace*{1.0cm}\epsfig{scale=0.35, figure=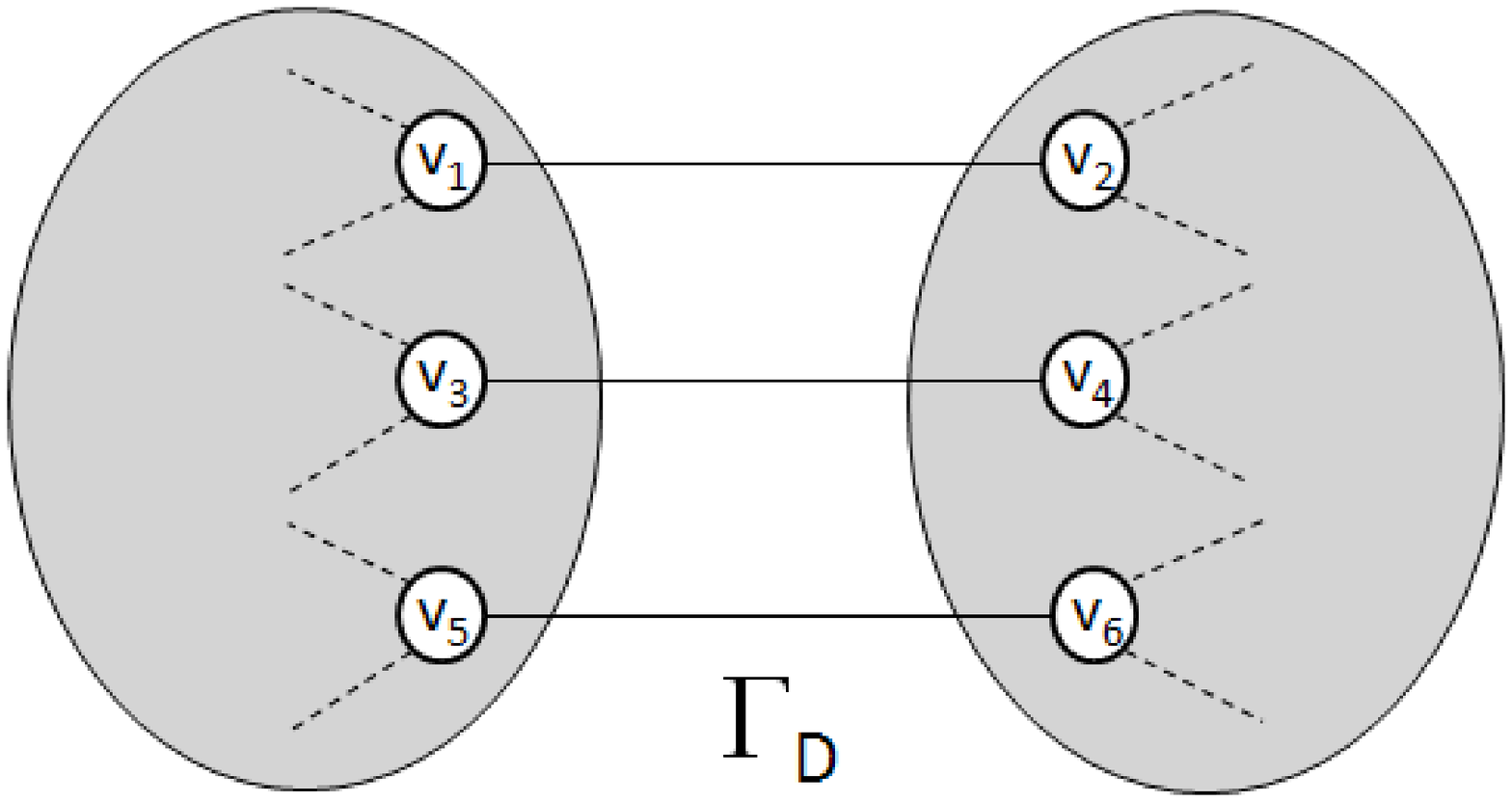}\\
\caption{Graphs $\Gamma_1$, $\Gamma_2$ and $\Gamma_D$ as described
in Proposition
\ref{prop-3cracker}}\label{fig-3_cracker_case_1}\end{center}\end{figure}

Note that any 3-cracker can be obtained from a type 3 breeding operation,
and therefore all 3-crackers are irreducible.

\begin{definition}
We refer to a set of genes that can, through a series of breeding
and parthenogenic operations, be used to produce a descendant
$\Gamma_D$ as {\em ancestor genes} for
$\Gamma_D$.\label{def-ancestor_genes}
\end{definition}

Propositions \ref{prop-1cracker} -- \ref{prop-3cracker} and
Definition \ref{def-ancestor_genes} allow us to derive the main
theorem of this section.

\begin{theorem}Consider any descendant cubic graph $\Gamma_D.$ Then,
\begin{enumerate}\item[(1)] $\Gamma_D$ can be obtained from one or two parents by at least one of the six operations $\mathcal{B}_1(\cdot)$, $\mathcal{B}_2(\cdot)$, $\mathcal{B}_3(\cdot)$, $\mathcal{P}_1(\cdot)$, $\mathcal{P}_2(\cdot)$, $\mathcal{P}_3(\cdot)$.
\item[(2)] Every descendant $\Gamma_D$ has a set of ancestor genes.\end{enumerate}\label{theorem-anygraph}\end{theorem}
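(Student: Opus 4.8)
The plan is to prove part (1) first and then derive part (2) as an easy corollary via an induction on the number of vertices. For part (1), recall from Definition \ref{def-gene_descendant} that a descendant is precisely a cubic graph that is $C1$-, $C2$-, or $C3$-connected, i.e.\ one whose smallest cracker is a cubic cracker (a $1$-, $2$-, or $3$-cracker). So $\Gamma_D$ contains at least one cubic cracker. I would split into the three mutually exhaustive cases according to the smallest type of cubic cracker present: if $\Gamma_D$ has a $1$-cracker, apply Proposition \ref{prop-1cracker}; if it has a $2$-cracker (but no $1$-cracker is needed to be excluded, since we may simply pick any $2$-cracker), apply Proposition \ref{prop-2cracker}; if it has a $3$-cracker, apply Proposition \ref{prop-3cracker}. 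In each case the cited proposition exhibits $\Gamma_D$ as the output of one of the six operations applied to one or two cubic graph parents, which is exactly the assertion of (1). The only subtlety worth a sentence is that the ``parents'' produced in those propositions are genuine cubic graphs (each operation's inverse construction reattaches edges or contracts a degree-$3$ vertex so cubicity is preserved) and are connected — the latter already being recorded in the lemma preceding this subsection, or recoverable directly from the fact that a cracker, being a cyclic edge cut, leaves each side with $\geq 3$ vertices.

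For part (2), I would argue by strong induction on $N = |V(\Gamma_D)|$. The base of the induction is handled by observing that every cubic graph on $4$ or $6$ vertices is either $\Gamma_4^*$ or $\Gamma_6^*$ (both genes, hence not descendants) or the $6$-vertex graph of Figure \ref{fig-envelope}, whose $3$-cracker is handled directly by Proposition \ref{prop-3cracker} with both parents being the $4$-vertex gene $\Gamma_4^*$; in any event there are only finitely many small descendants and each reduces to genes trivially. For the inductive step, let $\Gamma_D$ be a descendant on $N$ vertices. By part (1), $\Gamma_D$ is the child of one or two parents $\Gamma_1$ (and possibly $\Gamma_2$) under one of the six operations. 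Each parent has \emph{strictly fewer} vertices than $\Gamma_D$: every breeding operation merges two graphs whose vertex counts sum to at least $N-2$ (type 3) or $N$ (type 2) or $N-2$ (type 1), so each individual parent is smaller; every parthenogenic operation removes a parthenogenic diamond/bridge/triangle, so the single parent is smaller by $4$, $2$, or $2$ vertices respectively. Each parent is a connected cubic graph, hence is either a gene or a descendant; if it is a gene we are done with that branch, and if it is a descendant the induction hypothesis supplies ancestor genes for it. Collecting the ancestor genes across the (at most two) parents — via Definition \ref{def-ancestor_genes}, transitivity of ``can be obtained by a series of operations'' is immediate since we may concatenate the breeding sequences — yields a set of genes producing $\Gamma_D$.

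The step I expect to require the most care is confirming that the strict decrease in vertex count holds uniformly and, relatedly, that the parents handed back by Propositions \ref{prop-1cracker}--\ref{prop-3cracker} really are \emph{cubic} and \emph{connected} rather than merely ``graphs'': for instance in Case 2.1 of Proposition \ref{prop-1cracker} four vertices are deleted and one edge added, and one must check the resulting degrees are all $3$ and that no accidental disconnection occurs — but this is exactly what those proofs already verify, so here it suffices to cite them. A second point to state carefully is that the recursion genuinely terminates: since $N \geq 4$ for every cubic graph and each operation strictly decreases the vertex count, the process cannot continue below the base cases, so it halts at genes after finitely many steps, giving the \emph{finite} set of ancestor genes promised in the introduction. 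No deeper obstruction arises; part (2) is essentially a bookkeeping consequence of part (1) plus well-foundedness of $\mathbb{N}$.
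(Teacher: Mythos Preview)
Your proposal is correct and follows essentially the same route as the paper: for (1) you invoke Propositions \ref{prop-1cracker}--\ref{prop-3cracker} according to which cubic cracker is present, and for (2) you recurse on the parents until only genes remain. The only cosmetic difference is that you phrase (2) as a formal strong induction on $|V(\Gamma_D)|$ with an explicit check that every operation strictly decreases the vertex count, whereas the paper states the same recursion informally as building an ``ancestral family tree'' and reading off the genes at the top nodes; your version is slightly more explicit about why the process terminates, but the underlying argument is identical.
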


\begin{proof} Any descendant graph contains at least one cubic cracker.
Any one of these cubic crackers can be selected, which will be
either a 1-cracker, a $2$-cracker, or a $3$-cracker. Then, from
Propositions \ref{prop-1cracker}, \ref{prop-2cracker} and
\ref{prop-3cracker}, we can obtain $\Gamma_D$, from either one or
two parents, using one of the six operations. Therefore (1) is proved.

Next, consider how $\Gamma_D$ is obtained. If $\Gamma_D$ is
obtained through breeding, it has two parent graphs. If $\Gamma_D$
is obtained through parthenogenesis, it has one parent graph.
These parent graphs may be either genes, or descendants. If any
of them are descendants, then by part (1) they also have one or two
parent graphs each. Inductively, we can continue to consider the
parents of descendants, while recording which operations are used to
produce them, thereby obtaining an {\em ancestral family
tree} of $\Gamma_D$. Once the entire tree is determined, all of
the top nodes are genes, and we can recall the sequence of
operations that produces $\Gamma_D$ from these genes. Therefore
(2) is proved.
\end{proof}

See Figure \ref{fig-ancestral_tree} for an example of an ancestral
family tree for a descendant with 14 vertices.

\begin{figure}[h!]\begin{center}\hspace*{-1cm}\epsfig{scale=0.46, figure=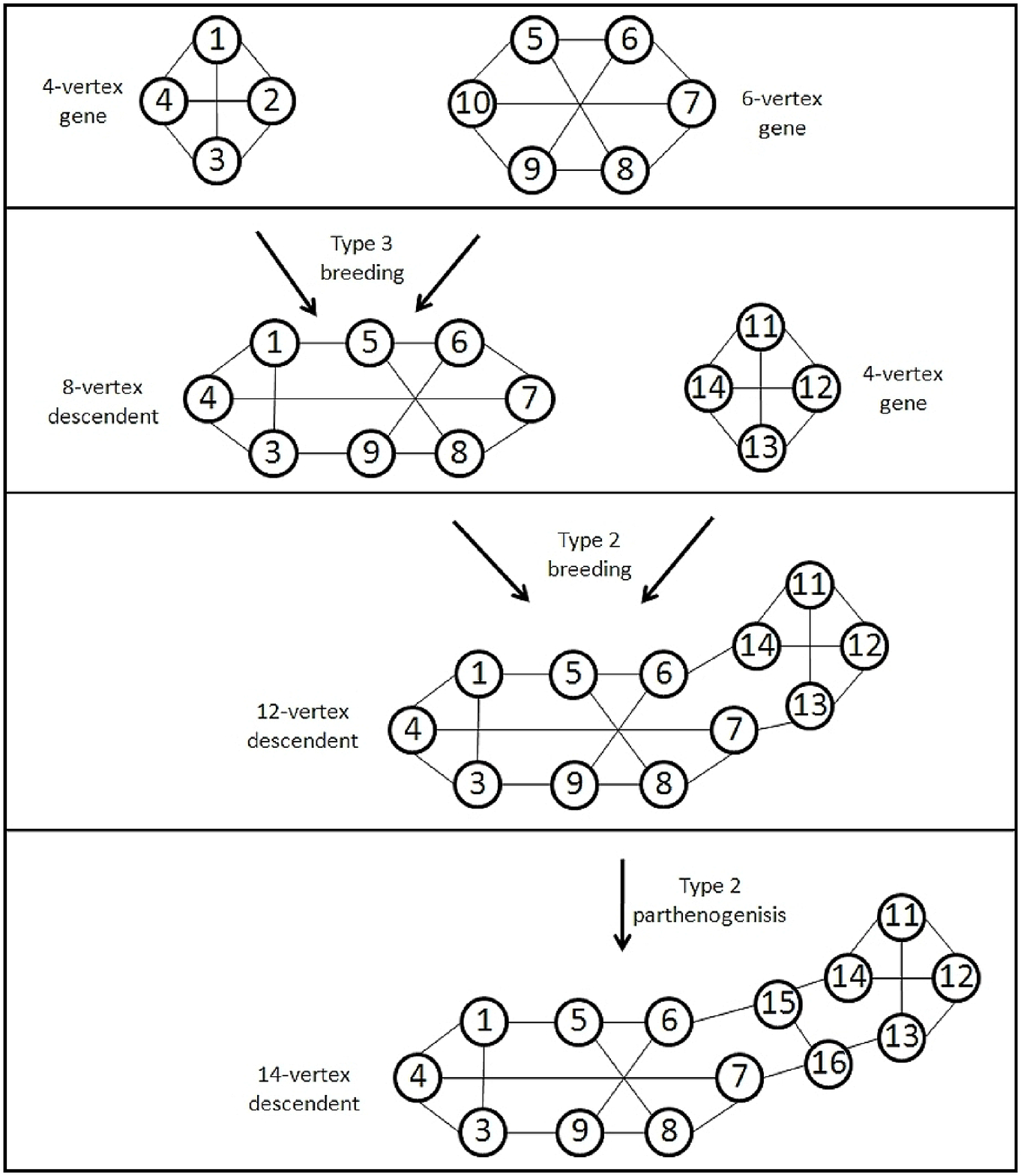}\\
\caption{An ancestral family tree for a 14-vertex
descendant, with three ancestor genes}\label{fig-ancestral_tree}\end{center}\end{figure}

Theorem \ref{theorem-anygraph} indicates that, for any descendant,
we can obtain a set of ancestor genes by first applying an inverse
operation to obtain one or two parents. Then we can apply an
inverse operation on the parent(s) to obtain new parents
(grandparents of the original descendant), and
continue to apply inverse operations until a set of ancestor
genes is obtained.

However, a given descendant may contain only reducible cubic
crackers, which do not permit inverse breeding operations.
In these cases, inverse parthenogenic
operations can be performed, if one of the three parthenogenic
objects are present within the descendant. The removal of such a
parthenogenic object often changes a reducible cracker into an
irreducible cracker which, in turn, permits an inverse breeding
operation to be carried out. The following proposition ensures that,
for any descendant, at least one of the cubic crackers permits
an inverse operation, and furthermore that every reducible cracker
in a given descendant can be changed into an irreducible cracker by
a sequence of inverse parthenogenic operations.

\begin{proposition}If a graph $\Gamma_D$ is a descendant, one of the following must be true.
\begin{enumerate}\item[(1)] $\Gamma_D$ contains at least one irreducible cubic cracker, or
\item[(2)] It is possible to perform a sequence of inverse
parthenogenic operations, each time obtaining a new parent, until
a parent is obtained that contains at least one irreducible cubic
cracker.\end{enumerate}\label{prop-always_irreducible}\end{proposition}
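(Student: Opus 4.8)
The plan is to reason by induction on the number of vertices $N$ of the descendant $\Gamma_D$, using the fact (established by Lemma \ref{lem-all_cracker} and Definition \ref{def-gene_descendant}) that a descendant always contains at least one cubic cracker, i.e.\ a $1$-, $2$- or $3$-cracker. By Proposition \ref{prop-3cracker} every $3$-cracker is irreducible, so if $\Gamma_D$ has a $3$-cracker we are done via alternative (1). Hence we may assume $\Gamma_D$ contains only $1$-crackers and $2$-crackers. If any of these is irreducible, again (1) holds and we are finished. The substantive case is therefore that \emph{every} cubic cracker in $\Gamma_D$ is reducible.

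In that case I would exploit the structural analysis already carried out in the proofs of Propositions \ref{prop-1cracker} and \ref{prop-2cracker}. A reducible $1$-cracker $(v_1,v_2)$ falls into Case 2.1 or Case 2.2 of Proposition \ref{prop-1cracker}: in Case 2.1 the graph contains a parthenogenic diamond adjacent to the bridge, and in Case 2.2 it contains a parthenogenic triangle next to the bridge; similarly a reducible $2$-cracker falls into Case 2.1 or Case 2.2 of Proposition \ref{prop-2cracker}, exposing either a parthenogenic triangle (Case 2.1) or a parthenogenic bridge (Case 2.2). In each situation the corresponding inverse parthenogenic operation $\mathcal{P}_1^{-1}$, $\mathcal{P}_2^{-1}$ or $\mathcal{P}_3^{-1}$ is well defined and produces a single parent $\Gamma_1$ that is a cubic graph with strictly fewer vertices than $\Gamma_D$ (two fewer for $\mathcal{P}_2^{-1}$ and $\mathcal{P}_3^{-1}$, four fewer for $\mathcal{P}_1^{-1}$), and the reduced cracker is still present in $\Gamma_1$ but with one fewer parthenogenic object stacked on it. I would check that $\Gamma_1$ is again connected (this is essentially the content of the ``connectedness'' lemma run in reverse, together with the explicit edge sets given in the definitions) and that it is still a descendant, because the image cracker — a $1$- or $2$-cracker — survives in $\Gamma_1$, so $\Gamma_1$ has cyclic edge connectivity at most $3$.

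Now apply the induction hypothesis to $\Gamma_1$: either $\Gamma_1$ already contains an irreducible cubic cracker, in which case the sequence consisting of the single step $\Gamma_D \to \Gamma_1$ (possibly followed by the further steps guaranteed inductively) realises alternative (2); or $\Gamma_1$ only has reducible crackers, and we repeat. Since the vertex count strictly decreases at every inverse parthenogenic step and cannot drop below the smallest descendant size, the process must terminate, and it can only terminate at a parent possessing an irreducible cubic cracker (a descendant with no cubic cracker at all does not exist, and by Proposition \ref{prop-3cracker} any surviving $3$-cracker is automatically irreducible). The second clause of the proposition, that \emph{every} reducible cracker can be converted into an irreducible one, follows by applying the same argument while always choosing to reduce along the designated cracker: each inverse parthenogenic operation strips exactly one parthenogenic object from it, so after finitely many steps no parthenogenic object sits next to it and it becomes irreducible.

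The main obstacle I anticipate is verifying that the relevant parthenogenic object is genuinely \emph{present} whenever a $1$- or $2$-cracker is reducible — that is, that reducibility forces the local configuration to be exactly one of the diamond/triangle/bridge patterns of the Case~2 analyses, with no other possibilities and no degenerate overlaps (for instance when the parthenogenic object itself interacts with a second cracker, or when the host graph is so small that the removed object would collapse the graph below the minimal descendant size). Handling these small-case degeneracies, and confirming that the output of each inverse parthenogenic operation is a bona fide connected cubic descendant rather than one of $\Gamma_4^*$, $\Gamma_6^*$ or a disconnected graph, is where the care is needed; the termination argument itself is routine once the strict decrease in $N$ is in hand.
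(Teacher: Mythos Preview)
Your proposal is correct and follows essentially the same route as the paper: both arguments note that $3$-crackers are always irreducible, then show that a reducible $1$- or $2$-cracker forces one of the parthenogenic configurations (diamond, triangle, or bridge) in its neighbourhood, apply the corresponding inverse parthenogenic operation to obtain a strictly smaller descendant still carrying a cubic cracker, and iterate until an irreducible cracker appears. The paper carries out the local case analysis explicitly rather than quoting Cases~2.x of Propositions~\ref{prop-1cracker}--\ref{prop-2cracker}, and phrases termination as a direct descent rather than an induction on $N$, but the content is the same; your anticipated ``main obstacle'' about small degenerate cases is not addressed in the paper either.
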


\begin{proof} Since $\Gamma_D$ is a descendant, it contains one or more
cubic crackers. If any of them are irreducible, then (1) is true.
If all the cubic crackers are reducible, then $\Gamma_D$ contains
at least one $1$-cracker or $2$-cracker (as $3$-crackers are always
irreducible). We can therefore select
either a reducible $1$-cracker or a reducible $2$-cracker in
$\Gamma_D$. We will consider both cases separately.

{\bf Case 1:} We select a reducible $1$-cracker $C_1 =
\{(a,b)\}$. Since $\Gamma_D$ is cubic, we know that $a$ must be
adjacent to two more vertices, say $c$ and $d$. Since $C_1$ is
reducible, then without loss of generality, we can assume that
edge $(c,d)$ is present in $\Gamma_D$. Then, the cubicity of
$\Gamma_D$ also ensures that vertices $c$ and $d$ must each be
adjacent to one more vertex, say $e$ and $f$ respectively. Note
that it is possible that $e = f$.

{\bf Case 1.1:} If $e = f$, then the cubicity of $\Gamma_D$
ensures that vertex $e$ must be adjacent to one more vertex, say
$g$ (see Figure \ref{fig-irreducible11}). Then, removing edges
$(a,b)$ and $(e,g)$ from $\Gamma_D$ disconnects a parthenogenic
diamond, which we can remove from $\Gamma_D$ by use of the inverse
type 1 parthenogenic operation
$\mathcal{P}^{-1}_1(\Gamma_D,(a,b),(e,g))$. In the parent graph
$\Gamma_P$, there is a new $1$-cracker comprising edge $(b,g)$.
Note that the new $1$-cracker may also be reducible.

\begin{figure}[h]\begin{center}\hspace*{0cm}\epsfig{scale=0.35, figure=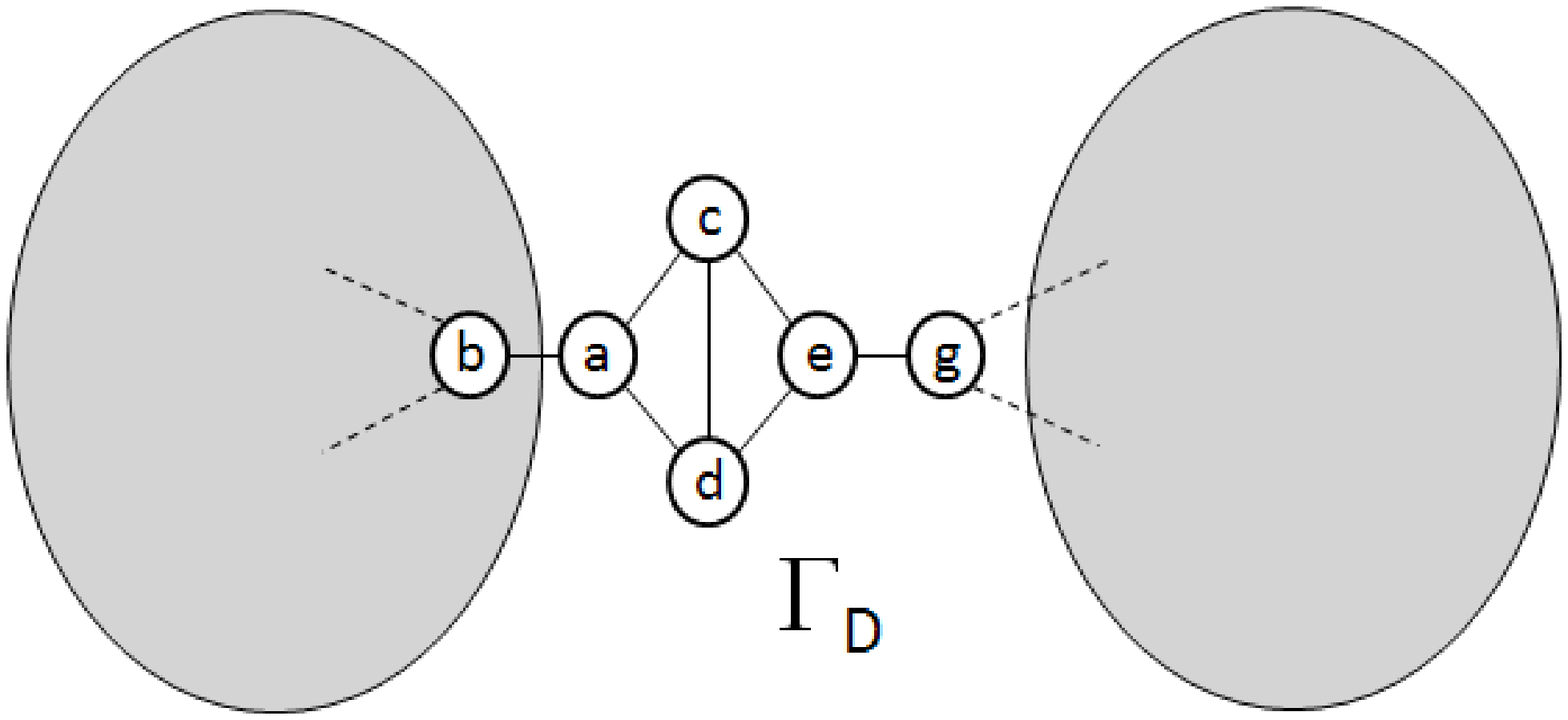}\hspace*{1.0cm}\epsfig{scale=0.35, figure=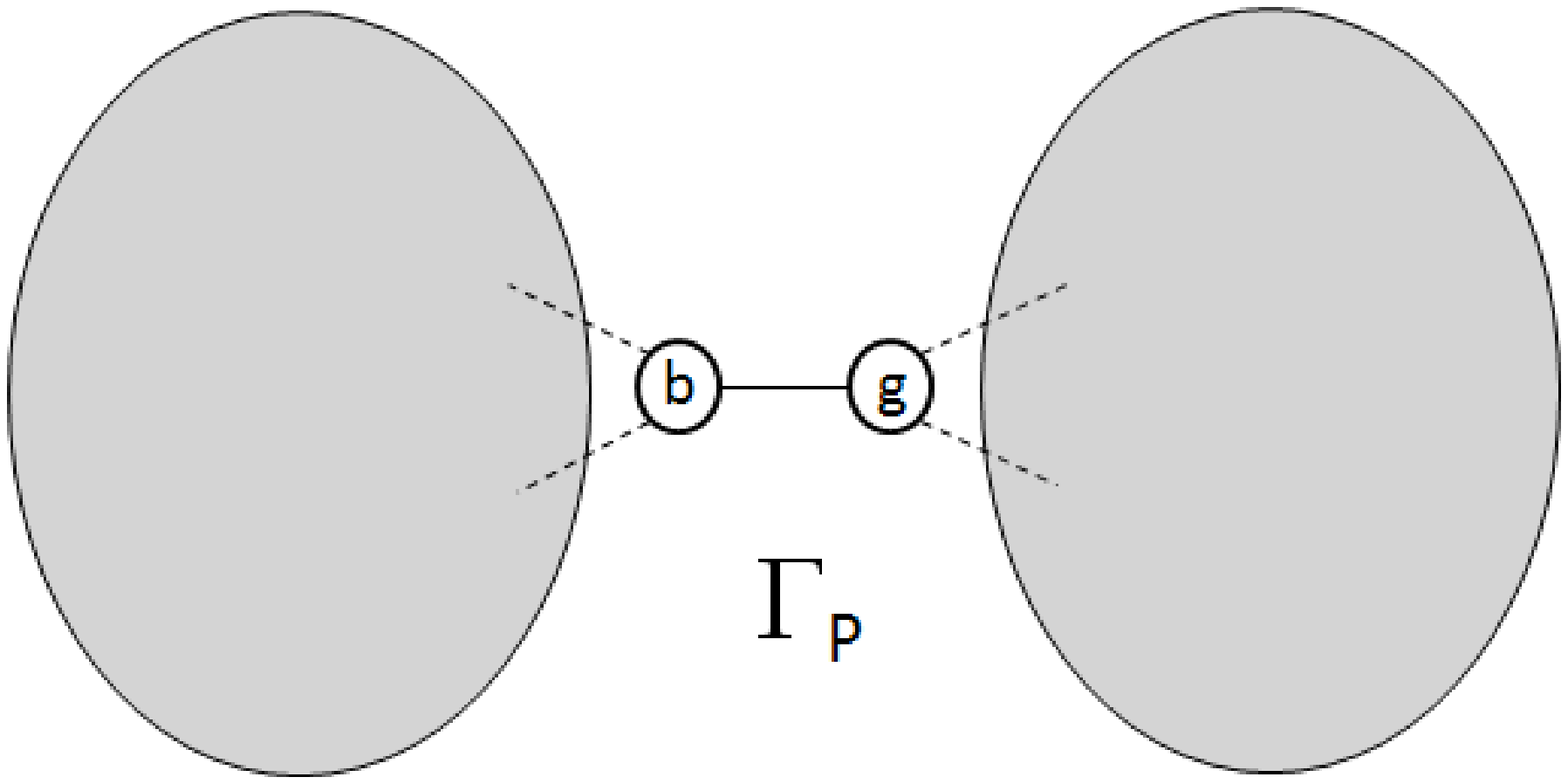}\\
\caption{Graphs $\Gamma_D$ and $\Gamma_P$ as described in Case
1.1}\label{fig-irreducible11}\end{center}\end{figure}

{\bf Case 1.2:} If $e \neq f$, then removing edges $(a,b)$,
$(c,e)$ and $(d,f)$ from $\Gamma_D$ isolates a parthenogenic
triangle (see Figure \ref{fig-irreducible12}), which we can remove
from $\Gamma_D$ by use of the inverse type 3 parthenogenic
operation $\mathcal{P}^{-1}_3(\Gamma_D,a,c,d)$. In the
parent graph $\Gamma_P$, the original $1$-cracker $C_1 =
\{(a,b)\}$ remains. Note that the $1$-cracker may still be
reducible.

\begin{figure}[h]\begin{center}\hspace*{0cm}\epsfig{scale=0.35, figure=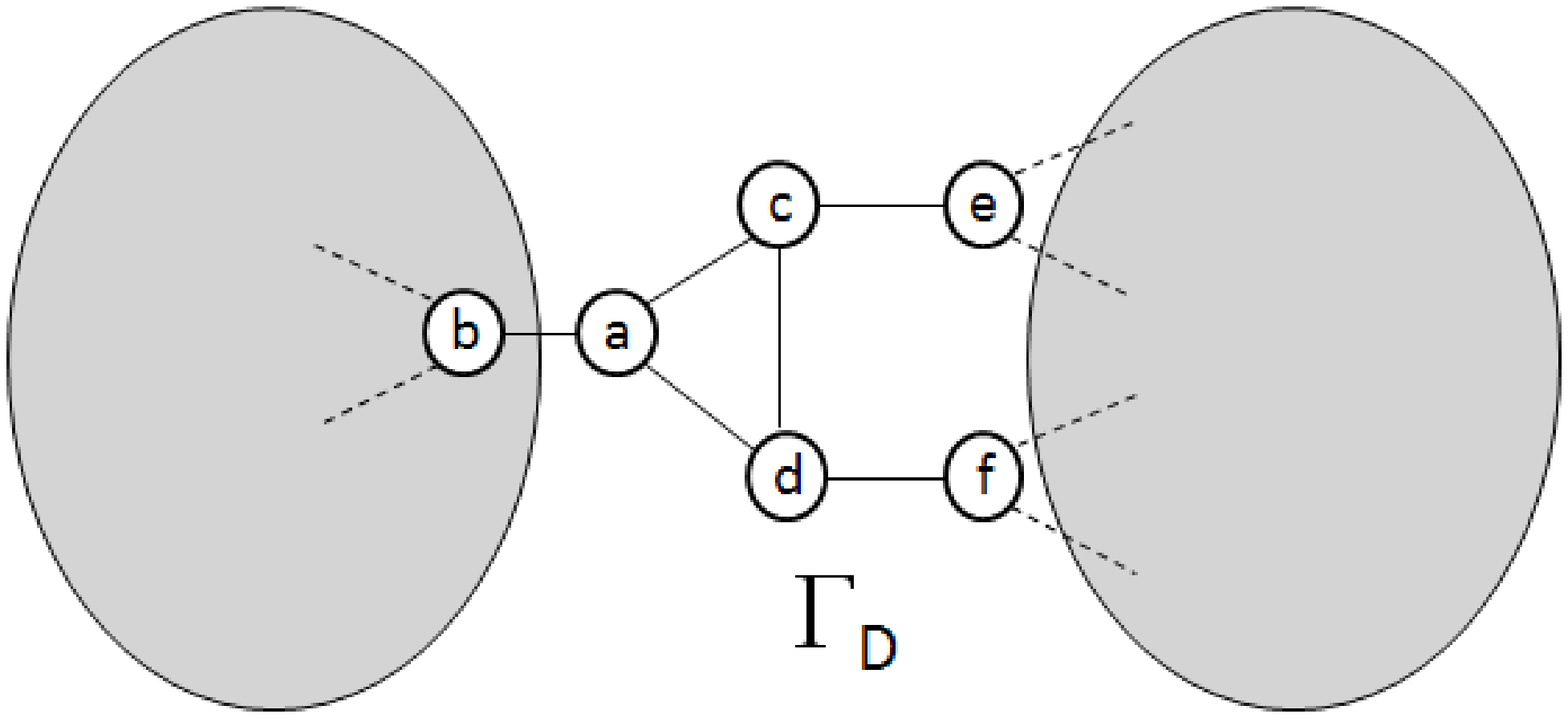}\hspace*{1.0cm}\epsfig{scale=0.35, figure=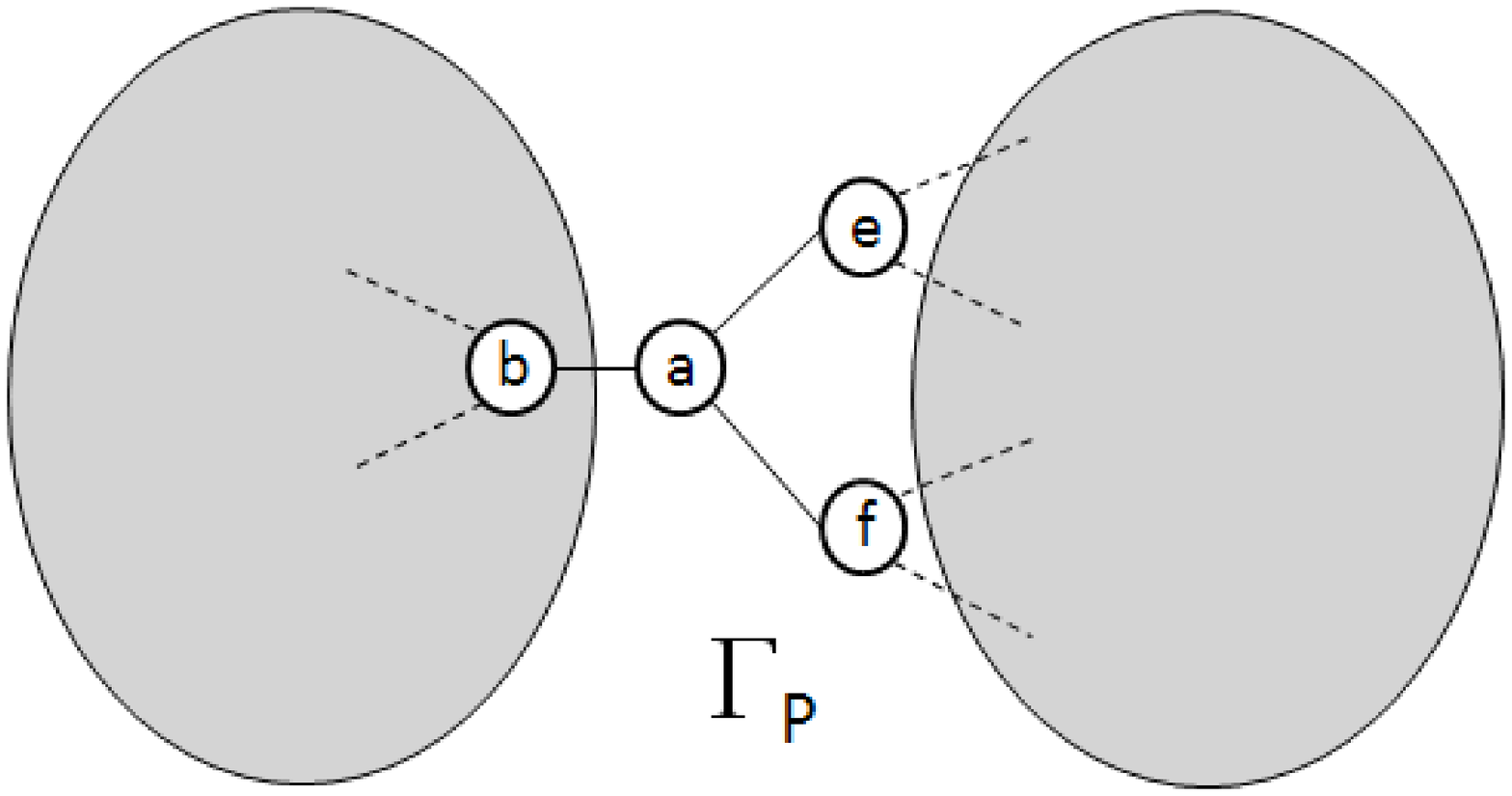}\\
\caption{Graphs $\Gamma_D$ and $\Gamma_P$ as described in Case
1.2}\label{fig-irreducible12}\end{center}\end{figure}

{\bf Case 2:} We select a reducible $2$-cracker $C_2 =
\{(a,b),(c,d)\}$. Since $C_2$ is reducible, then without loss of
generality, we can assume that edge $(a,c)$ is present in
$\Gamma_D$. Then, the cubicity of $\Gamma_D$ also ensures that
vertices $a$ and $c$ must each be adjacent to one more vertex, say
$e$ and $f$, respectively. Note that it is possible that $e = f$.

{\bf Case 2.1:} If $e = f$, then as illustrated in Figure
\ref{fig-irreducible21}, the cubicity of $\Gamma_D$ ensures that
vertex $e$ must be adjacent to one more vertex, say $g$. Then,
removing edges $(a,b)$, $(c,d)$ and $(e,g)$ from $\Gamma_D$
disconnects a parthenogenic triangle, which we can remove from
$\Gamma_D$ by use of the inverse type 1 parthenogenic operation
$\mathcal{P}^{-1}_1(\Gamma_D,a,c)$. In the parent
graph $\Gamma_P$, a $1$-cracker comprising edge $(e,g)$ remains.
Note that this $1$-cracker may still be reducible.

\begin{figure}[h]\begin{center}\hspace*{0cm}\epsfig{scale=0.35, figure=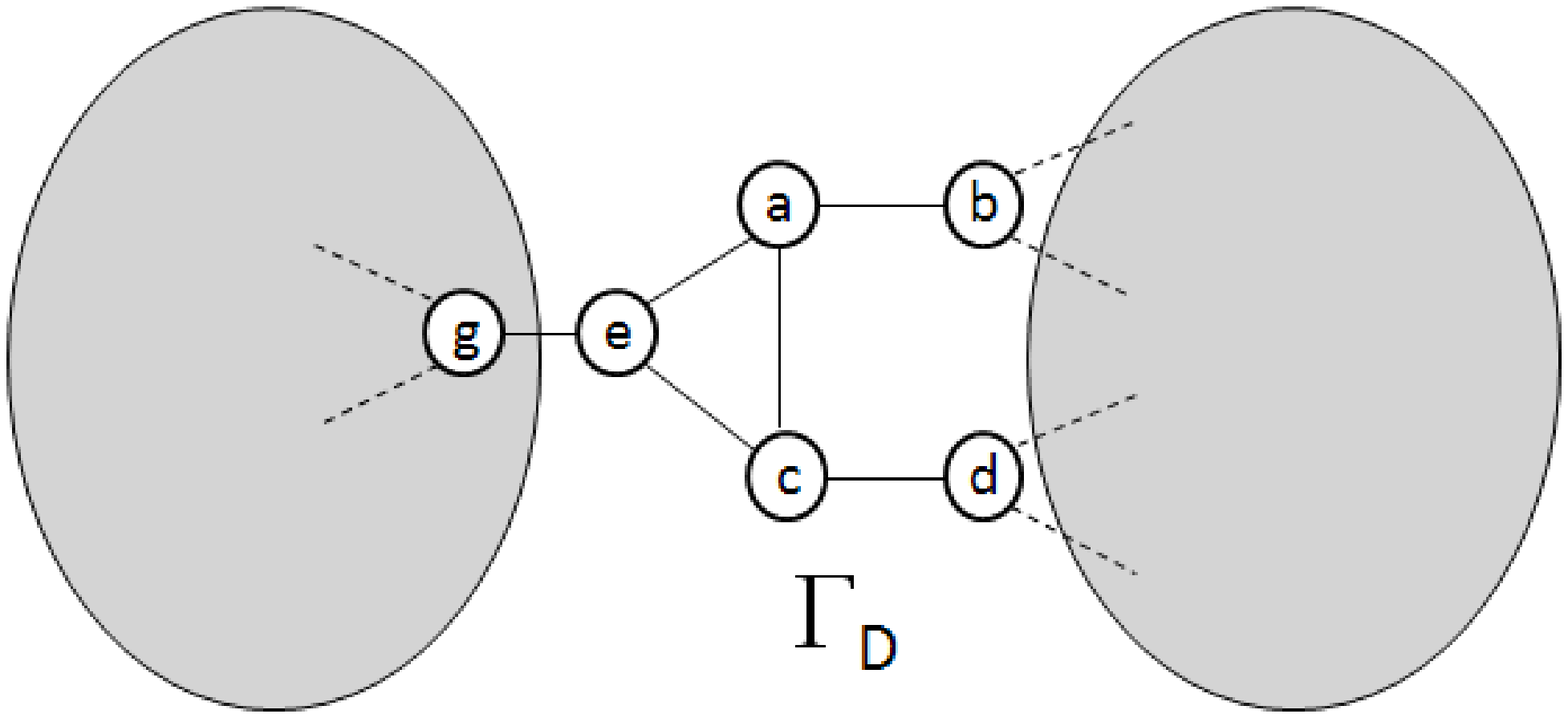}\hspace*{1.0cm}\epsfig{scale=0.35, figure=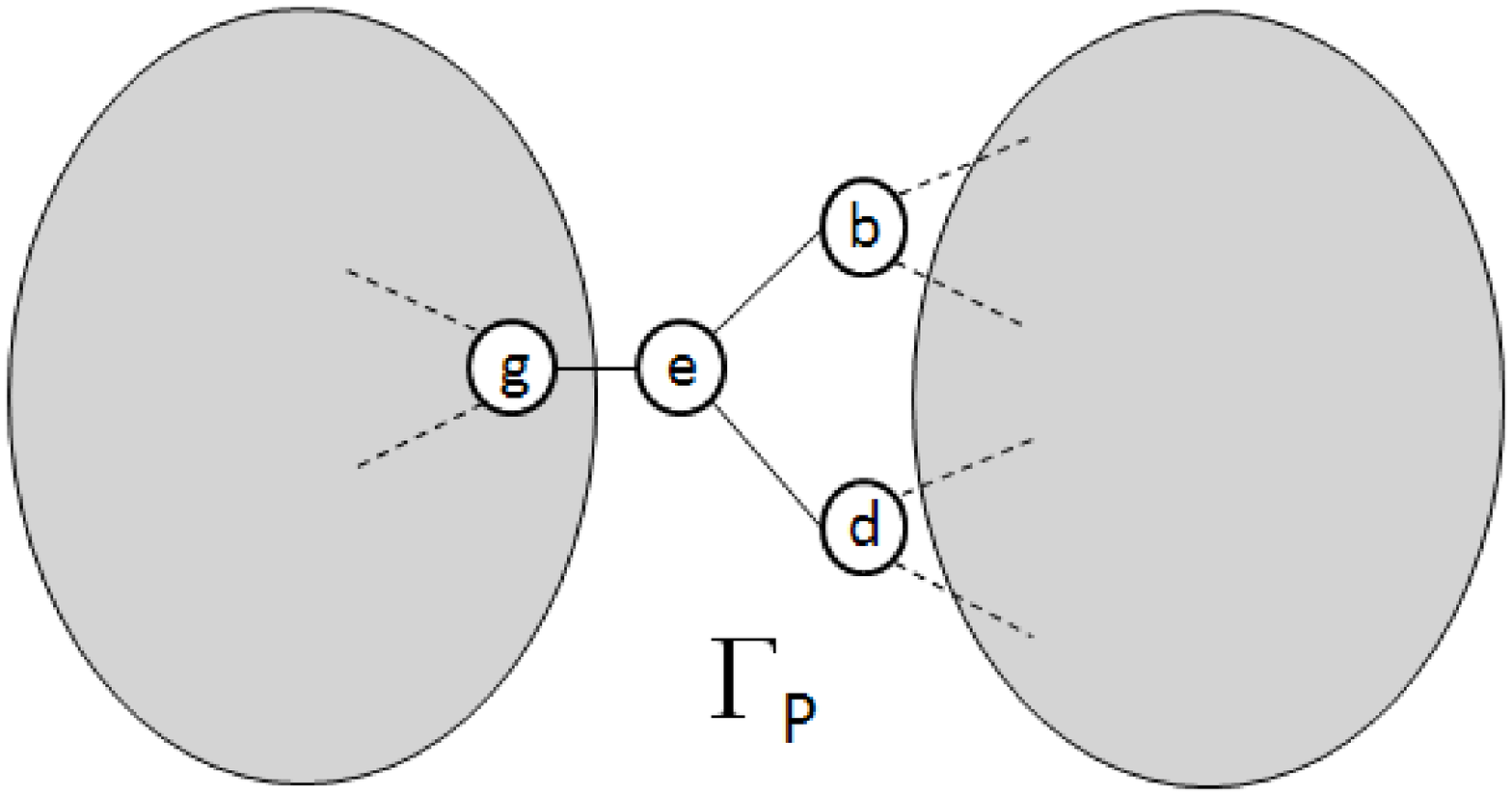}\\
\caption{Graphs $\Gamma_D$ and $\Gamma_P$ as described in Case
2.1}\label{fig-irreducible21}\end{center}\end{figure}

{\bf Case 2.2:} If $e \neq f$, then as illustrated in Figure
\ref{fig-irreducible22}, the removal of edges $(a,b)$, $(c,d)$,
$(a,e)$ and $(c,f)$ from $\Gamma_D$ disconnects a parthenogenic
bridge, which we can remove from $\Gamma_D$ by use of the inverse
type 2 parthenogenic operation
$\mathcal{P}^{-1}_2(\Gamma_D,a,c)$. In the parent graph $\Gamma_P$, there
is a new $2$-cracker comprising edges $(e,b)$ and $(f,d)$. Note
that this $2$-cracker may be reducible.

\begin{figure}[h]\begin{center}\hspace*{0cm}\epsfig{scale=0.35, figure=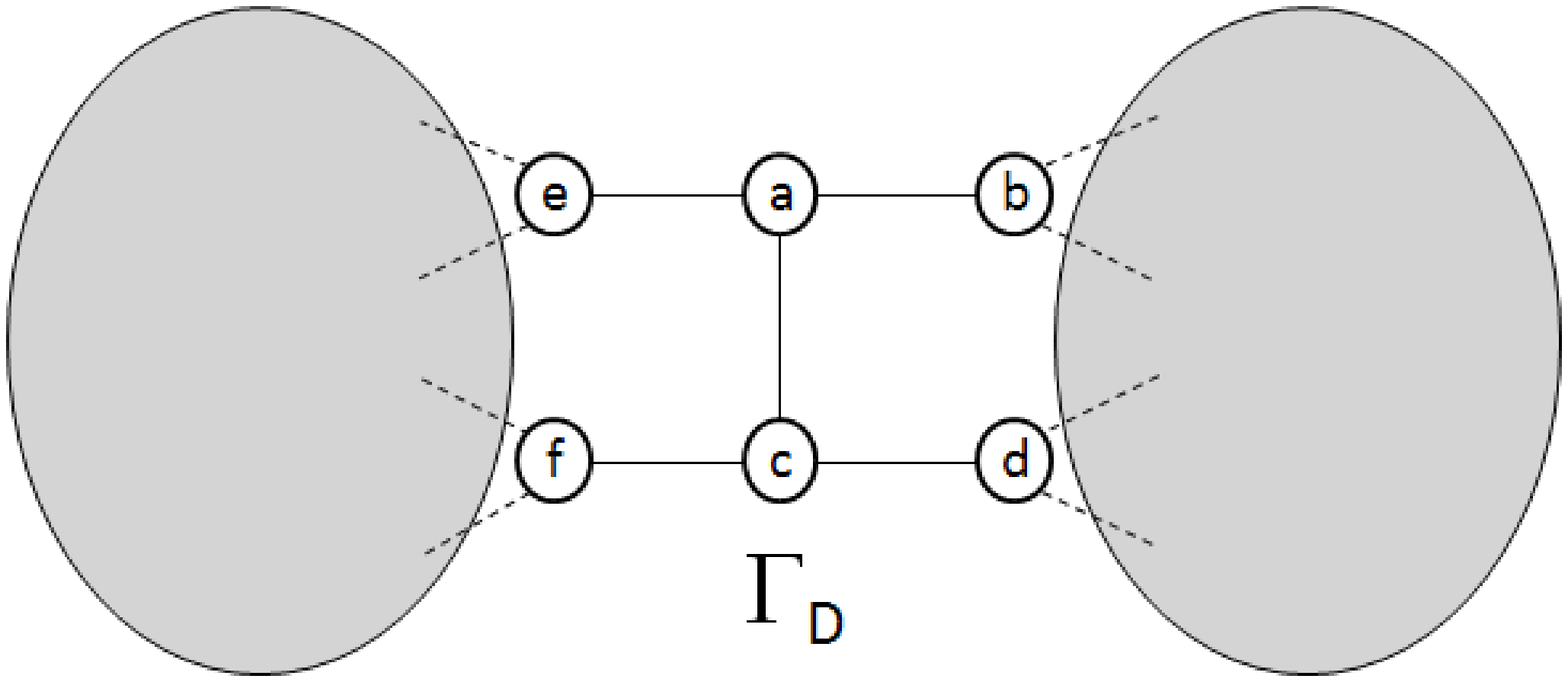}\hspace*{1.0cm}\epsfig{scale=0.35, figure=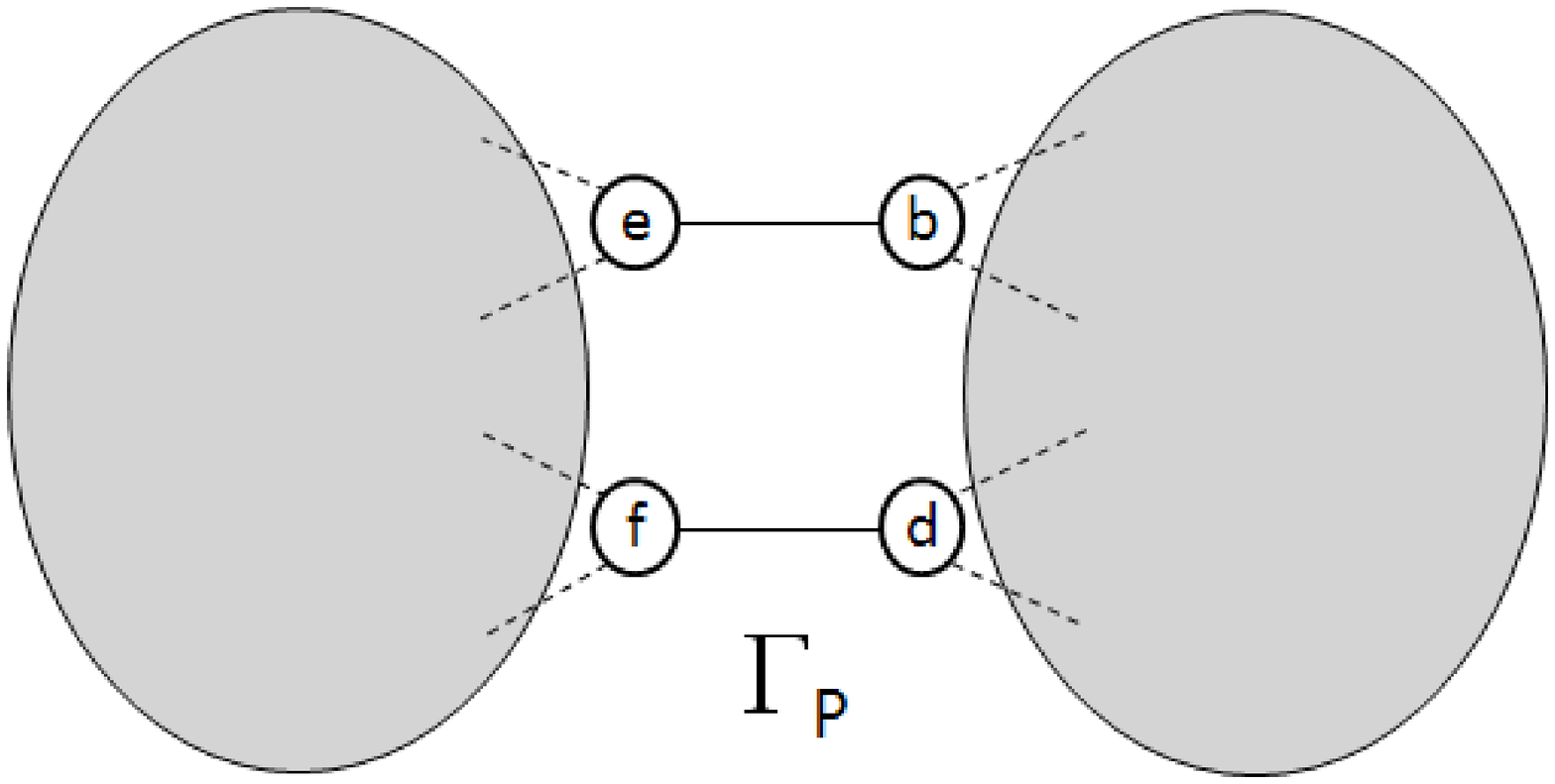}\\
\caption{Graphs $\Gamma_D$ and $\Gamma_P$ as described in Case
2.2}\label{fig-irreducible22}\end{center}\end{figure}

Since in all cases considered, the parent graph contains a cubic
cracker (either a new one introduced by an inverse parthenogenic
operation, or one that remains from the original descendant),
the parent is itself always a descendant. Then,
either (1) is true for this parent, or if not, we can repeat the
above procedure until we obtain a parent for which (1) is true.
Since each inverse parthenogenic process outputs a parent with
fewer vertices than its child, we are guaranteed to eventually
converge to such a case. \end{proof}

\section{Conclusions}

The theory presented above allows us to separate the set $S$ of connected cubic graphs into two distinct and encompassing categories - the comparatively smaller set of genes, that form the basic building blocks of $S$,
and the much larger set of descendants, that inherit a lot of their structure from the genes. Theorem \ref{theorem-anygraph} and Proposition \ref{prop-always_irreducible} give both a proof of existence, and a
guarantee of obtaining a set of ancestor genes for any given descendant. An algorithm to identify a set of ancestor
genes, given a descendant, would be a simple task of identifying all the crackers, surveying each until one
is found that permits an inverse operation, and recursively repeating the process in each parent obtained until only genes remain. Such an algorithm would terminate in polynomial time.

In addition to the aesthetic beauty of rendering the ancestry of cubic graphs as finite sets of smaller cubic graphs, there are some important algorithmic benefits as well. Since descendants inherit much of their
structure from genes, it is possible that a search for graph theoretic properties within a descendant could
be more efficiently recast as a search of the (typically much smaller) genes instead. In this context, Theorem \ref{theorem-anygraph} and Proposition \ref{prop-always_irreducible} give rise to a generic decomposition algorithm that could be applied on most cubic graphs. Since the ancestor genes also lie within the set of connected cubic graphs, any existing algorithms designed for cubic graphs will work for the genes.

For example, experimental evidence indicates that many non-bridge non-Hamiltonian cubic graphs are descendants that contain at least one mutant ancestor gene. In these cases, it is clear that the descendant has inherited the non-Hamiltonicity property from its ancestor mutant gene (or genes). Then, an obvious heuristic for determining Hamiltonicity in a descendant is to identify a set of ancestor genes and determine their Hamiltonicity instead, using whatever state of the art algorithms are available (e.g. see Eppstein \cite{eppstein}). Since determining Hamiltonicity is an NP-complete problem, and therefore the best known algorithms have exponential solving time, such a decomposition represents a large saving in solving time. It is potentially possible that the Hamiltonian cycle problem, already known to be NP-complete even when considering only cubic graphs \cite{gareyjohnson}, could be further refined to requiring the consideration of only genes. If so, the extreme rarity of mutants indicates such an avenue could potentially be fruitful. The inheritance of non-Hamiltonicity, and other such graph theoretic properties, is a subject for future research.

Given that properties can be inherited from a set of ancestor genes, a natural question to ask is how many different sets of ancestor genes a descendant graph might have, and how their various (potentially conflicting) properties may influence the single descendant. The following conjecture, if correct, removes any such confusion.

\begin{conjecture}Any descendant $\Gamma_D$ has a unique set of ancestor genes.\label{conj-unique}\end{conjecture}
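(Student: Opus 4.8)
The plan is to attack uniqueness by formalizing the ancestral decomposition as a canonical object and showing it is well-defined independently of the choices made during the recursive peeling. First I would set up a notion of \emph{reduction order}: given a descendant $\Gamma_D$, repeatedly locate a cubic cracker admitting an inverse operation (guaranteed by Proposition \ref{prop-always_irreducible}), apply that inverse operation, and recurse on the resulting parent(s). The output is a multiset of genes. The key structural claim to isolate is a \emph{local confluence} (diamond) property: if $\Gamma_D$ admits two distinct inverse operations, applied at crackers $C$ and $C'$ producing parent sets $\mathcal{F}$ and $\mathcal{F}'$ respectively, then there is a common set of genes reachable from both $\mathcal{F}$ and $\mathcal{F}'$ by further reductions. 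Since the total number of vertices strictly decreases with each inverse operation (as noted after Proposition \ref{prop-always_irreducible}), the reduction relation is terminating; local confluence plus termination then yields global confluence by Newman's Lemma, and hence a unique normal form --- the unique set of ancestor genes.

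The heart of the argument, and where I expect the main obstacle to lie, is establishing local confluence. The natural case analysis is by how the two crackers $C$ and $C'$ sit relative to one another in $\Gamma_D$. If $C$ and $C'$ are \emph{edge-disjoint and non-interfering} (neither the subgraph excised/modified by the inverse operation at $C$ meets $C'$, nor vice versa), then the two inverse operations commute on the nose: applying one does not disturb the local configuration certifying the other, so both orders lead to the same intermediate graph and we are done. The delicate subcase is when $C$ and $C'$ \emph{overlap} --- they share an edge, or one cracker's edges lie inside the small subgraph (parthenogenic diamond/bridge/triangle, or the $K_4$/prism-like neighbourhood of a breeding site) that the other inverse operation consumes. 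Here one must show that the overlap is rigid enough to force the two local pictures to be compatible: essentially, that in a cubic graph two cubic crackers of sizes $\le 3$ can only overlap in a bounded list of configurations, each of which can be enumerated and checked to be confluent. Because crackers consist of pairwise non-adjacent edges and cubicity is so restrictive, I would expect only a short finite list of overlap patterns (e.g. two $2$-crackers sharing an edge, a $1$-cracker adjacent to a parthenogenic object, two $3$-crackers sharing an edge, etc.), and for each one exhibits the common descendant obtained by a short completing sequence of inverse operations.

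A secondary obstacle is bookkeeping the interaction between \emph{reducible} and \emph{irreducible} crackers: an inverse parthenogenic operation at $C$ may turn a reducible cracker $C'$ into an irreducible one (this is precisely the mechanism of Proposition \ref{prop-always_irreducible}), so the "same" cracker can change type across a reduction step. The fix is to track crackers by the \emph{vertex-partition they induce} rather than by reducibility status, and to observe that inverse parthenogenic operations only strip off a bounded gadget while preserving the coarse partition structure elsewhere; this makes the correspondence between crackers before and after a reduction step canonical enough to run the confluence argument. Finally, once local confluence is in hand, I would invoke termination (strictly decreasing vertex count, bounded below by the $4$- and $6$-vertex genes $\Gamma_4^*$, $\Gamma_6^*$) and Newman's Lemma to conclude that every maximal reduction sequence from $\Gamma_D$ ends at the same multiset of genes, which is the asserted unique set of ancestor genes. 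The numerical experiments reported in the paper would then serve as a consistency check on the finite overlap-case analysis.
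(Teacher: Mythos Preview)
The paper does not prove this statement: it is explicitly left as a \emph{conjecture}, supported only by exhaustive computer verification on all cubic graphs with at most $18$ vertices and all triangle-free cubic graphs on $20$ vertices. There is therefore no paper proof to compare your proposal against.

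As for your proposal itself, the strategy via Newman's Lemma is a natural line of attack, but what you have written is an outline rather than a proof, and the entire difficulty of the conjecture is concentrated in the step you label ``the heart of the argument'' and then defer. You assert that overlapping cubic crackers in a cubic graph fall into ``a short finite list'' of configurations each of which can be checked to be confluent, but you neither enumerate that list nor verify any of the cases; until that is done there is no proof. Two further technical points deserve attention before the approach can be made rigorous. First, inverse breeding operations send one graph to \emph{two} parents, so the rewriting relation is not on single graphs but on multisets of graphs; Newman's Lemma still applies in that setting, but you should state the objects and the reduction relation precisely. Second, and more seriously, local confluence must be checked not only for pairs of inverse operations at overlapping crackers, but also at \emph{disjoint} crackers lying on opposite sides of one another: performing $\mathcal{B}_i^{-1}$ at $C$ may place $C'$ entirely inside one parent, and you must then verify that reducing that parent at $C'$ yields the same multiset as reducing first at $C'$ and then at the image of $C$. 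This ``far-apart'' case is not automatic, because the inverse breeding operations modify the graph near the cracker (adding an edge or a vertex), and one must check that these modifications do not create or destroy crackers elsewhere in a way that breaks confluence.
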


To support Conjecture \ref{conj-unique}, we conducted experiments on individual graphs in which we considered all possible ways to decompose the graph into a set of ancestor genes, and verified that each of these approaches gave the same set of ancestor genes. This experiment was performed on all cubic graphs containing up to 18 vertices, and all triangle-free 20-vertex cubic graphs, constituting 143,528 graphs. For these sizes, there can be up to 7! = 5040 possible ways to decompose a graph into ancestor genes. No counterexample to Conjecture \ref{conj-unique} was found among the tested graphs.

If Conjecture \ref{conj-unique} is true it provides, along with Theorem \ref{theorem-anygraph}, a guarantee of existence and uniqueness of ancestor genes for every cubic graph. For such a graph, the above implies that it is either a gene, or there is a unique set of ancestor genes which can be identified in polynomial time, and that these ancestor genes provide the majority of structure in the descendant. If false, Conjecture \ref{conj-unique} may still be true for all but very special classes of descendants. Note that although the conjecture postulates the existence of a unique set of ancestor genes for any given descendant, the order of breeding operations used to obtain the descendant is clearly not unique.

\section*{Acknowledgements}The authors gratefully acknowledge helpful discussions with C.E. Praeger, B.D. McKay and P. Zograf. The research in this manuscript was made possible by grants from the Australian Research Council, specifically by the Discovery grants DP0666632 and DP0984470.

\bibliographystyle{alpha}   

\end{document}